\tikzset{nodde/.style={circle,draw=blue!50,fill=blue!20,inner sep=1.2pt}}
\tikzset{nodeblack/.style={circle,draw=black,fill=black!30,inner sep=1.2pt}}
\newtheorem{theorem}{Theorem}  [section]
\newtheorem{lemma}[theorem]{Lemma}
\newtheorem{claim}[theorem]{Claim}
\newtheorem{conj}[theorem]{Conjecture}
\theoremstyle{definition}
\newcommand{\bproof}{\begin{proof}}
\newcommand{\eproof}{\end{proof}}
\newcommand{\R}{{\mathbb R}}
\newcommand{\rat}{{\mathbb Q}}
\newcommand{\rank}{{\mbox{rank }}}
\newcommand{\sm}{\setminus}
\newcommand{\Hc}{{\mathcal H}}
\newcommand{\Lc}{{\mathcal L}}
\newcommand{\Hco}{\overline{{\mathcal H}}}
\newcommand{\Rc}{{\mathcal R}}
\newcommand{\Xc}{{\mathcal X}}
\DeclareMathOperator{\val}{val}
\DeclareMathOperator{\cov}{cov}
\title{Coincident Rigidity of 2-Dimensional Frameworks
}
\author{Hakan Guler\thanks{Department of Mathematics, Faculty of Arts \& Sciences, Kastamonu University, Kastamonu, Turkey.
Email: hakanguler19@gmail.com}\ \ and Bill Jackson\thanks{School of Mathematical Sciences, Queen Mary
University of London, Mile End Road, London E1 4NS, UK. 
Email: b.jackson@qmul.ac.uk}}
\begin{document}
\maketitle

\begin{abstract}
Fekete, Jord\'an and Kaszanitzky \cite{FJK} characterised the graphs which can be realised  as  2-dimensional, infinitesimally rigid, bar-joint frameworks in which two given vertices are coincident.  We formulate a conjecture which would extend their characterisation to an arbitrary set $T$ of vertices and verify our conjecture when $|T|=3$.
\end{abstract}
\maketitle
\section{Introduction}
A $d$-dimensional (bar-and-joint) {\em framework} is a pair $(G,p)$ where $G=(V,E)$ is
a simple graph and $p:V\rightarrow \R^d$ is a map, which we refer to as the {\em realisation} of the framework. The {\em length} of each edge of $(G,p)$ is given by the Euclidean distance betwean its end points. The framework is said to be {\em rigid} if every continuous motion of the vertices which preserves the lengths of the edges, preserves the distance between all pairs of vertices.  It is {\em infinitesimally rigid} if it satisfies the stronger property that every infinitesimal motion of the vertices which preserves the lengths of the edges is an infinitesimal isometry of $\R^d$.

It is not difficult to see that a 1-dimensional framework is rigid if and only if its underlying graph is connected, but for $d\geq 2$, the decision problem of deciding whether a given $d$-dimensional framework is rigid is NP-hard by a result of Abbot \cite{A}. This  problem becomes more tractable, however, if we restrict our attention to {\em generic frameworks} 
i.e.~framworks $(G,p)$ for which the set of the coordinates $p(v)$, $v\in V$, is algebraically
independent over the rational numbers. Asimow and Roth \cite{AR} showed that the properties of rigidty and infinitesimal rigidity are equivalent for such frameworks and depend only on the underlying graph. This allows us to define a graph $G$ as being {\em rigid} in $\R^d$ if some, or equivalently every, generic realisation of $G$ in $\R^d$ is rigid.

Graphs which are rigid in $\R^d$ have been characterised for $d=1,2$: we have already seen that $G$ is rigid in $\R$ if and only if $G$ is connected and a fundamental result of Pollaczek-Geiringer \cite{PG}, subsequently rediscovered by Laman \cite{L}, characterises when $G$
is rigid in $\R^2$. Finding a characterisation when $d\geq 3$ is the main open problem in distance geometry, although characterisations do exist for certian families of graphs. A common technique used to show that a family of graphs $G$ is rigid in $\R^d$ is to reduce $G$ to a smaller graph $G'$ in the family  by some operation, apply induction  to deduce that $G'$ is rigid, and then show
that the inverse operation preserves rigidity. The last step in this proof strategy frequently uses a geometric argument based on a nongeneric realisation of $G$. More precisely, we extend a generic (and hence infinitesimally rigid) realisation $p'$ of $G'$ to a realisation $p$ of $G$ by choosing special positions for the vertices of $V(G)\sm V(G')$ which make it easy to conclude that $(G,p)$ is also infinitesimally rigid, then use the fact (see Section \ref{sec:prelim}) that if some realsation of $(G,p)$ is infinitesimally rigid then every generic realsation of $(G,p)$ is infinitesimally rigid. 

This approach has stimulated interest in such special position frameworks. Jackson and Jord\'an \cite{JJ} characterised when a graph $G$ has an infinitesimally rigid
realisation in $\R^2$ in which three given vertices are collinear. Another result due to 
Fekete, Jord\'an and Kaszanitzky \cite{FJK}, which is closer to our interests in this paper, characterises when a graph $G$ has an infinitesimally rigid
realisation in $\R^2$ in which  two given vertices are coincident. We need some new notation to describe their theorem. Given a graph $G=(V,E)$ and $T\subseteq V$ we use $G/T$ to denote the graph obtained by contracting the vertices in $T$ to a single vertex. When $T=\{u,v\}$ we often use $G/uv$ instead of $G/T$. We also use $G-uv$ to denote the graph obtained from $G$ by deleting the edge $uv$ if it exists in $E$ (and putting $G-uv=G$ when $uv\not\in E$). 

\begin{theorem}\cite{FJK}\label{C3-thm:uv-FJK}
Let $G$ be a graph and $u,v$ be distinct vertices of $G$. Then
$G$ has an infinitesimally rigid, $\{u,v\}$-coincident realisation in $\mathbb{R}^2$ if and only if $G-uv$ and $G/{uv}$ are
both rigid in $\mathbb{R}^2$.
\end{theorem}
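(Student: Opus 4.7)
The plan is to handle the two implications separately; necessity is essentially a linear-algebra computation, whereas sufficiency contains the real content.

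\textbf{Necessity.} Suppose $(G,p)$ is an infinitesimally rigid realisation with $p(u)=p(v)=p_0$. Two observations suffice. First, whether or not $uv\in E(G)$, the (possible) row of the rigidity matrix $R(G,p)$ indexed by $uv$ has entries $\pm(p(u)-p(v))=0$, so $R(G-uv,p)$ has the same rank as $R(G,p)$; hence $(G-uv,p)$ is infinitesimally rigid, and by the fact noted in the excerpt that infinitesimal rigidity of some realisation implies that of the generic one, $G-uv$ is rigid. Second, denote the contracted vertex of $G/uv$ by $w$ and let $p'$ agree with $p$ on $V\setminus\{u,v\}$ with $p'(w)=p_0$; any infinitesimal motion $m'$ of $(G/uv,p')$ lifts to an infinitesimal motion $m$ of $(G,p)$ via $m(u)=m(v)=m'(w)$ and $m(x)=m'(x)$ otherwise, because each edge constraint of $(G,p)$ then reduces to the corresponding constraint for $(G/uv,p')$. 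Infinitesimal rigidity of $(G,p)$ forces $m$, and hence $m'$, to be an infinitesimal isometry, so $G/uv$ is rigid.

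\textbf{Sufficiency: setup.} Choose a generic realisation $p'$ of $G/uv$ (infinitesimally rigid by hypothesis) and define $p$ on $V(G)$ by $p(u)=p(v)=p'(w)$ and $p(x)=p'(x)$ for $x\ne u,v$. Because the row of $R(G,p)$ indexed by $uv$ vanishes, infinitesimal rigidity of $(G,p)$ is equivalent to $\rank R(G-uv,p)=2|V|-3$. The lifting map $\psi:\R^{2(|V|-1)}\to\R^{2|V|}$ sending $m'$ to the $m$ with $m(u)=m(v)=m'(w)$ identifies $\ker R(G/uv,p')$ with $\{m\in\ker R(G-uv,p): m(u)=m(v)\}$, and infinitesimal rigidity of $(G/uv,p')$ makes this image exactly the $3$-dimensional space of infinitesimal isometries of $\R^2$. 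The problem thus reduces to showing that every $m\in\ker R(G-uv,p)$ has $\delta:=m(v)-m(u)=0$.

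\textbf{Main obstacle.} The core difficulty is that $p$ is not a generic realisation of $G-uv$ (coincidence is a closed non-generic condition) and rigidity-matrix rank is only lower semicontinuous, so infinitesimal rigidity of $(G-uv,p)$ cannot be invoked directly. A useful preliminary is that for each common neighbour $x$ of $u,v$ in $G$, subtracting the $ux$- and $vx$-rows of $R(G-uv,p)$ gives the scalar constraint $(p(x)-p_0)\cdot\delta=0$, so two common neighbours in general position relative to $p_0$ immediately force $\delta=0$. In the general case one must derive constraints on $\delta$ from longer $u$-to-$v$ paths in $G-uv$. My plan is to normalise $m(u)=0$ and $m(v)=\delta$, view the remaining edge equations as an inhomogeneous linear system for $(m(x))_{x\ne u,v}$ whose right-hand side depends linearly on $\delta$, and then read off solvability via a Fredholm-type alternative whose obstruction lives in the cokernel of an appropriate submatrix of $R(G/uv,p')$; rigidity of $G/uv$ together with genericity of $p'$ should force $\delta=0$. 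Carrying out this cokernel analysis rigorously, while tracking the asymmetric roles of edges $ux$, edges $vy$, and edges inside $V\setminus\{u,v\}$, is the main technical hurdle.
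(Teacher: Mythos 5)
Your necessity argument is correct and is the standard one: the row of $R(G,p)$ indexed by $uv$ vanishes when $p(u)=p(v)$, so $(G-uv,p)$ is infinitesimally rigid, and the injective lifting of $\ker R(G/uv,p')$ into $\ker R(G,p)$ (this is exactly inequality (\ref{eq:dimKer}) of the paper) gives infinitesimal rigidity of $(G/uv,p')$; semicontinuity of rank then yields generic rigidity of both graphs. Note that the paper does not prove this theorem at all --- it is quoted from \cite{FJK} --- so the only internal comparison available is with the analogous $|T|=3$ development in Sections \ref{sec:matroidrank} and \ref{sec:main}.

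The sufficiency half, however, has a genuine gap. The reduction to showing that every $m\in\ker R(G-uv,p)$ satisfies $m(u)=m(v)$ is fine, but the proposed ``Fredholm-type alternative'' is only announced, not carried out, and you flag it yourself as the main hurdle. More tellingly, the hypothesis that $G-uv$ is rigid is never invoked in your plan, yet it is indispensable: if $v$ has degree one in $G-uv$, the graph $G/uv$ can still be rigid while the generic $\{u,v\}$-coincident framework has a nontrivial motion supported at $v$, so no argument using only rigidity of $G/uv$ and genericity of $p'$ can close the gap. Moreover, the $K_{5,5}$ example of Section \ref{subsec:K55} shows that in $\R^3$ both $G-uv$ and $G/uv$ can be rigid while the generic coincident framework is flexible; hence any correct cokernel analysis must use input specific to dimension two, and a purely formal solvability argument of the kind you sketch would ``prove'' the false three-dimensional statement as well. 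The actual proof in \cite{FJK} (mirrored here for $|T|=3$ by Theorems \ref{C3-thm:count_matroid} and \ref{C3-thm:count_imp_geom}) instead characterises independence in the coincident rigidity matroid by sparsity counts and establishes geometric sufficiency by induction on $|V|$ using $0$-extensions, $1$-extensions and vertex splitting; the rigidity of $G-uv$ enters there through the rank formula for $1$-thin covers. As it stands, your sufficiency direction is a plan rather than a proof, and the plan omits the step where the second hypothesis must do its work.
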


We will obtain an analogous characterisation for three coincident vertices.

\begin{theorem}\label{C3-thm:int_main1}
Let $G=(V,E)$ be a graph, $u,v,w$ be distinct vertices of $G$,
and $G'=G-uv-uw-vw$.
Then $G$ has an infinitesimally rigid, $\{u,v,w\}$-coincident realisation in $\mathbb{R}^2$ if and only if $G'$ 
and $G'/S$ are rigid in $\R^2$ for all $S\subseteq \{u,v,w\}$ with $|S|\geq 2$.
\end{theorem}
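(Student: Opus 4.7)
\noindent\emph{Necessity.} Assume $(G,p)$ is an infinitesimally rigid $\{u,v,w\}$-coincident realisation. Since $p(u)=p(v)=p(w)$, the edges in $E(G)\sm E(G')$ have length zero and contribute only zero rows to the rigidity matrix, so $(G',p)$ is infinitesimally rigid. By the openness of infinitesimal rigidity among realisations of $G'$, this forces $G'$ to be rigid. For each $S\subseteq\{u,v,w\}$ with $|S|\ge 2$, let $q$ denote the realisation of $G'/S$ induced by $p$ via the identification of the vertices of $S$. Any infinitesimal motion of $(G'/S,q)$ lifts to an infinitesimal motion of $(G',p)$ by assigning the same vector to every vertex of $S$, and hence must be trivial. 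So $(G'/S,q)$ is infinitesimally rigid and $G'/S$ is rigid.

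\noindent\emph{Sufficiency.} The plan is to reduce to Theorem~\ref{C3-thm:uv-FJK} by first contracting one pair of the coincident vertices. Set $H=G'/vw$ and denote the new vertex by $x$. By hypothesis $H$ is rigid (since $G'/\{v,w\}$ is) and $H/ux=G'/\{u,v,w\}$ is rigid; moreover $uv,uw\notin E(G')$ implies $ux\notin E(H)$, so $H-ux=H$ is rigid. Theorem~\ref{C3-thm:uv-FJK} then provides an infinitesimally rigid $\{u,x\}$-coincident realisation $(H,q)$. Define $p:V(G)\to\R^2$ by $p(v)=p(w)=q(x)$ and $p(z)=q(z)$ for $z\notin\{v,w\}$; this is a $\{u,v,w\}$-coincident realisation of $G$, and as in the necessity argument $(G,p)$ is infinitesimally rigid if and only if $(G',p)$ is.

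The main technical step is to verify infinitesimal rigidity of $(G',p)$. Any infinitesimal motion $\dot p$ of $(G',p)$ satisfying $\dot p(v)=\dot p(w)$ descends to an infinitesimal motion of $(H,q)$ and must be trivial. It therefore suffices to show $\dot p(v)=\dot p(w)$ for every infinitesimal motion of $(G',p)$. Writing $\delta=\dot p(v)-\dot p(w)$, each common neighbour $z$ of $v$ and $w$ in $G'$ yields the constraint $\delta\cdot(p(v)-p(z))=0$; further constraints on $\delta$ come along paths through $u$ that relate the motion at $v$ to the motion at $w$ using that $p(u)=p(v)=p(w)$, and invoke the rigidity hypotheses $G'/\{u,v\}$ and $G'/\{u,w\}$. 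By choosing $(H,q)$ to be as generic as possible subject only to the coincidence $q(u)=q(x)$, these combined constraints should force $\delta=0$.

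The main obstacle will be making this last step rigorous in all cases, since the precise way common neighbours of $v,w$ and paths through $u$ generate enough independent constraints on $\delta$ depends on the global structure of $G'$. If this direct reduction does not cover every graph, the fallback is an inductive argument on $|V(G)|$ in the style of \cite{FJK}: one finds an admissible inverse $0$- or $1$-extension that reduces $|V(G)|$ while preserving the rigidity of $G'$ and of all four required contractions, applies induction, and then extends the coincident realisation back via the corresponding Henneberg-type operation.
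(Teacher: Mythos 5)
Your necessity argument is correct and is essentially the paper's: zero-length edges contribute zero rows, and infinitesimal motions of $(G'/S,q)$ lift injectively to infinitesimal motions of $(G',p)$, which is exactly the observation behind inequality (\ref{eq:dimKer}).

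The sufficiency direction, however, has a genuine gap, and it is precisely the step you flag yourself. After obtaining an infinitesimally rigid $\{u,x\}$-coincident realisation $(H,q)$ of $H=G'/vw$ from Theorem \ref{C3-thm:uv-FJK} and lifting it to $(G',p)$, you must show that every infinitesimal motion $\dot p$ of $(G',p)$ satisfies $\dot p(v)=\dot p(w)$. The only concrete constraints you exhibit are $\delta\cdot(p(v)-p(z))=0$ for common neighbours $z$ of $v$ and $w$; these suffice only if $v$ and $w$ have two common neighbours in general position (this is exactly the hypothesis of the paper's vertex-splitting Lemma \ref{lem:split}), and in general $v$ and $w$ need have no common neighbours at all. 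The appeal to ``paths through $u$'' and to the rigidity of $G'/uv$ and $G'/uw$ is not a proof: rigidity of those contractions is a statement about generic realisations of different graphs, and there is no mechanism in your argument by which it translates into linear constraints on $\delta$ in the specific framework $(G',p)$. Indeed, the whole difficulty of the theorem is that the deficiency of a non-rigid $T$-coincident realisation is detected by the $S$-compatible families of Section \ref{sec:matroidrank} for varying $S\subseteq T$, and a single contraction $G'/vw$ cannot see all of them. So the reduction as described cannot be completed; your ``fallback'' of an FJK-style induction is not a fallback but the actual content of the proof, and it is substantially harder than in \cite{FJK}.

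For comparison, the paper proves sufficiency by contraposition using the machinery of Sections \ref{sec:matroidrank} and \ref{sec:main}: if $G$ is not $T$-coincident rigid, then by Theorems \ref{C3-thm:count_matroid} and \ref{C3-thm:count_imp_geom} some $1$-thin augmented $S$-compatible family $\Lc$ covers $E\sm E_T$ with $\val_S(\Lc)\le 2|V|-4$, and this family projects to a $1$-thin cover of $G'$ (if $\Hc=\emptyset$) or of $G'/S$ (if $\Hc\neq\emptyset$) that violates the Lov\'asz--Yemini rank formula (Theorem \ref{thm:LY}), contradicting the rigidity hypotheses. The inductive proof of Theorem \ref{C3-thm:count_imp_geom} is where the geometric work lives, and it needs not only $0$- and $1$-extension reductions but also the vertex-splitting and subgraph-replacement lemmas together with a detailed structural analysis (Claims \ref{clm:mindeg3}--\ref{clm:6cycle}) and explicit base cases; none of this is supplied or replaced by your outline.
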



\section{Preliminaries}\label{sec:prelim}
The {\em rigidity matrix} $R(G,p)$ of a $d$-dimensional
framework $(G,p)$ is the matrix of size $|E|\times d|V|$, where in the row corresponding to an edge
$uv\in E$, the entries in the columns corresponding to $u$ and $v$ are $p(u)-p(v)$ and $p(v)-p(u)$, respectively
and all other entries are zeros. The right kernel of $R(G,p)$ is the space of {\em infinitesimal motions of} $(G,p)$. This space has dimension at least 
${d+1\choose 2}$ when $|V|\geq d$ since every infinitesimal isometry of $\mathbb{R}^d$ is an infinitesimal motion of $(G,p)$.
This implies that $\rank R(G,p)\leq d|V|-{d+1\choose 2}$ when $|V|\geq d$, and $(G,p)$ is {\em infinitesimally rigid}
if $\rank R(G,p)$ acheives this upper bound when $|V|\geq d$. For the case when $|V|< d$, $(G,p)$ is {\em infinitesimally rigid}
if $\rank R(G,p)={|V|\choose 2}$.
%
%
Since the rank of $R(G,p)$ will be maximised whenever $(G,p)$ is generic, this immediately implies that the infinitesimal  rigidity of a generic framework $(G,p)$ in $\R^d$ depends only on the underlying graph $G$.

The framework $(G,p)$ is said to be {\em independent} if the rows of its rigidity matrix $R(G,p)$
are linearly independent. The upper bound on $\rank R(G,p)$ gives the following necessary condition for independence which was first observed by Maxwell \cite{M}.
\begin{eqnarray}
&\mbox{If $(G,p)$ is an independent $d$-dimensional framework and $G'=(V',E')$}\nonumber\\
&\mbox{is a subgraph of $G$ with $|V'|\geq d$ then $|E'|\leq d|V'|-{d+1\choose 2}$.}\label{eq:ind}
\end{eqnarray}

 We can construct a matroid $\mathcal{R}(G,p)$ on $E(G)$ from $R(G,p)$ by defining a set $F\subseteq E(G)$ to be  independent in $\mathcal{R}(G,p)$ if the corresponding rows of ${R}(G,p)$ are linearly independent.
%
The  {\em $d$-dimensional rigidity matroid} $\mathcal{R}_d(G)$ of the graph $G$ is the matroid
$\mathcal{R}(G,p)$ for any generic $d$-dimensional framework $(G,p)$. 
The necessary condition for independence in $\mathcal{R}_d(G)$ given by (\ref{eq:ind}) is also sufficient when $d=1,2$. When $d=1$ it is equivalent to saying that $G$ is a forest. When $d=2$ it is implied by the above mentioned  characterisations of generic rigidity in  $\R^2$ due to Pollaczek-Geiringer \cite{PG}. Lov\'asz and Yemini \cite{LY} used the characterisation of independence in $\mathcal{R}_2(G)$ to determine its rank function.  
We need to introduce some new terminology to describe their result. Given a graph $G=(V,E)$ and $E'\subset E$, a {\em cover of $E'$} is a family $\mathcal X$ of subsets of $V$ such that each member of $\Xc$ has cardinality at least two and each edge in $E'$ is induced by some member of $\Xc$. The cover $\Xc$ is {\em 1-thin} if $|X_i\cap X_j|\leq 1$ for all distinct $X_i,X_j\in \Xc$.

\begin{theorem}\label{thm:LY} Let $G=(V,E)$ be a graph and $E'\subseteq E$. Then the rank of $E'$ in $\mathcal{R}_2(G)$ is given by 
$r(E')=\min\{\sum_{X\in \Xc}(2|X|-3):\mbox{$\Xc$ is a $1$-thin cover of $E'$}\}.$
\end{theorem}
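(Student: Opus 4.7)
My plan is to prove the two inequalities separately. The upper bound $r(E') \leq \sum_{X \in \Xc}(2|X|-3)$ holds for any cover $\Xc$ (the $1$-thin hypothesis is not needed here): since each edge of $E'$ lies in some $E(G[X])$ with $X \in \Xc$, subadditivity of the matroid rank function yields $r(E') \leq \sum_{X \in \Xc} r(E(G[X]) \cap E')$, and Maxwell's inequality \eqref{eq:ind} bounds each summand by $2|X|-3$.

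For the lower bound I would use the Pollaczek-Geiringer characterisation of independence in $\mathcal{R}_2$ recalled in the preceding paragraph of the paper, namely that $F \subseteq E$ is independent in $\mathcal{R}_2$ iff $|F'| \leq 2|V(F')|-3$ for every non-empty $F' \subseteq F$ with $|V(F')| \geq 2$. Fix a maximum independent subset $I \subseteq E'$, so $|I|=r(E')$, and call a vertex set $X$ with $|X|\geq 2$ \emph{rigid} if $|I \cap E(G[X])| = 2|X|-3$. The goal is to show that the family $\Xc^*$ of maximal rigid sets is a $1$-thin cover of $E'$ whose total weight equals $|I|$.

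The main obstacle is the following merging lemma: if $X_1, X_2$ are rigid and $|X_1 \cap X_2| \geq 2$, then $X_1 \cup X_2$ is rigid. To prove it, write $I_j = I \cap E(G[X_j])$; the independent set $I_1 \cap I_2$ sits on vertex set $X_1 \cap X_2$, so \eqref{eq:ind} gives $|I_1 \cap I_2| \leq 2|X_1 \cap X_2| - 3$, and inclusion-exclusion yields $|I_1 \cup I_2| \geq 2|X_1 \cup X_2| - 3$. Since $I_1 \cup I_2 \subseteq I$ is independent on vertex set $X_1 \cup X_2$, the reverse bound from \eqref{eq:ind} forces equality and rigidity of $X_1 \cup X_2$.

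Granting the merging lemma, the rest is routine. Distinct maximal rigid sets intersect in at most one vertex (else we could merge them), so $\Xc^*$ is $1$-thin and the sets $I \cap E(G[X])$ over $X \in \Xc^*$ partition $I$, giving $|I| = \sum_{X \in \Xc^*}(2|X|-3)$. To verify that $\Xc^*$ covers $E'$: for $e = uv \in I$ the pair $\{u,v\}$ is trivially rigid, while for $e \in E' \setminus I$ the fundamental circuit $C$ of $e$ in $I \cup \{e\}$ has minimum degree at least $2$ (otherwise removing the unique edge at a degree-$1$ vertex would make $C$ satisfy \eqref{eq:ind} and hence be independent), forcing $|C| = 2|V(C)|-2$ and exhibiting $V(C)$ as rigid with $e \in E(G[V(C)])$. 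Hence $\Xc^*$ is a $1$-thin cover of $E'$ of weight $r(E')$, matching the upper bound and proving the formula.
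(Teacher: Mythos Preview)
The paper does not give its own proof of Theorem~\ref{thm:LY}, citing it to Lov\'asz and Yemini~\cite{LY}; however, your argument is correct and is exactly the standard one, which also appears in the paper as the $|T|=1$ special case of the proof of Theorem~\ref{C3-thm:count_matroid} (Case~1 of Claim~\ref{clm:rankF}), where your ``rigid'' sets are the paper's ``$T$-tight'' sets and your merging lemma is Lemma~\ref{C3-lem:tight_set_int}. One minor point: the minimum-degree-two detour for the fundamental circuit $C$ is valid but unnecessary, since $|C|=2|V(C)|-2$ follows directly from Laman's characterisation (dependence of $C$ forces some subset to violate sparsity, and minimality of a circuit forces that subset to be $C$ itself) together with $r(C)=|C|-1\leq 2|V(C)|-3$.
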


For $T\subseteq V(G)$, we can define the   {\em $T$-coincident, $d$-dimensional rigidity matroid} $\mathcal{R}_{d,T}(G)$ of $G$ in the same way as the $d$-dimensional rigidity matroid. We first choose a reference vertex $t\in T$. We say that a realisation $p$ of $G$ is {\em $T$-coincident} if $p(v)=p(t)$ for all $v\in T$ and is a {\em generic $T$-coincident realisation} if $p|_{V(G)\sm(T-t)}$ is algebraically independent over $\rat$. Then $\rank R(G,p)$ will be maximised over all $T$-coincident realisations in $\R^d$ whenever $(G,p)$ is a generic $T$-coincident realisation and hence the infinitesimal  rigidity of a generic $T$-coincident framework $(G,p)$ in $\R^d$ depends only on the graph $G$ and the set $T$. We say that $G$ is {\em $T$-coincident rigid in $\R^d$} if some, or equivalently every, generic $T$-coincident realisation of $G$ in $\R^d$ is infinitesimally rigid. The  {\em $d$-dimensional $T$-coincident  rigidity matroid} $\mathcal{R}_{d,T}(G)$ of the pair $(G,T)$ is the matroid
$\mathcal{R}(G,p)$ for any generic $d$-dimensional $T$-coincident framework $(G,p)$. It is easy to see that $\mathcal{R}_{d,T}(G)=\mathcal{R}_{d}(G)$ when $|T|=1$ and that $\mathcal{R}_{1,T}(G)=\mathcal{R}_{1}(G-E_G(T))$, where $E_G(T)$ is the set of edges of $G$ induced by $T$. The results of \cite{FJK} characterise  $\mathcal{R}_{d,T}(G)$ when $d=|T|=2$. We will extend this characterisation to the case when $|T|=3$ and formulate a conjecture which would characterise  $\mathcal{R}_{2,T}(G)$ for all $T$. 

We will need the following observation which relates the $T$-coincident rigidity matroid $\mathcal{R}_{d,T}(G)$ to the rigidity matroid $\mathcal{R}_{d}(G/T)$.
Let $z_T$ be the vertex of $G/T$ corresponding to $T$.
 Given a framework $(G/T,p_T)$ in $\R^d$,
we can obtain a $T$-coincident realisation $(G,p)$ of $G$ by putting
$p(x)=p_T(z_T)$ if $x\in T$ and $p(x)=p_T(x)$ if $x\notin T$.
Furthermore, the map $q_T\mapsto q$ given by $q(x)=q_T(z_T)$ if $x\in T$ and $q(x)=q_T(x)$
if $x\notin T$ is an injective linear transformation from  $\ker R(G_T,p_T)$
to $\ker R(G,p)$.
This gives $\dim \ker R(G,p)\geq \dim \ker R(G/T,p_T)$ and hence 
\begin{equation}\label{eq:dimKer}
\rank  R(G,p)\leq \rank R(G/T,p_T) +d(|T|-1).
\end{equation}

We close this section with some graph theoretic notation and terminology. Let $G=(V,E)$ be a graph. For $X\subseteq V$, let $G[X]$ denote the
subgraph induced by $X$. Let $E_G(X)$ be the set and $i_G(X)$ be
the number of edges of $G[X]$. For a family $\mathcal{X}$
of subsets of $V$,  we put
$E_G(\mathcal{X})=\bigcup_{X\in\Xc}{E_G(X)}$ and
$i_G(\mathcal{X})=|E_G(\mathcal{X})|$. We also define
$\cov(\mathcal{X})=\{xy:\{x,y\}\subseteq X, \text{ for some } X\in\Xc\}$ and
say that $\mathcal{X}$ {\em covers}  a set $E'\subseteq E$ if $E'\subseteq \cov(\mathcal{X})$.
The {\em degree}
of a vertex $v$ in $G$ is denoted by $d_G(v)$ and the set of all  neighbours of $v$ in $G$
is denoted by $N_G(v)$. We will omit the subscripts referring to $G$ when the
graph is clear from the context.

\section{\boldmath A matroid construction}\label{sec:matroidrank}

We use a similar strategy to \cite{FJK} to characterise $\Rc_{2,T}(G)$ when $|T|=3$ and prove Theorem \ref{C3-thm:int_main1}.
Suppose $G=(V,E)$ is a graph  and $T\subseteq V$. In this section we derive necessary conditions for independence in ${\mathcal R}_{2,T}(G)$, and show that these necessary conditions for independence define 
a matroid  ${\mathcal M}_{T}(G)$ on $E(G)$ when $|T|\leq 3$. We show in the next section  that ${\mathcal  M}_T$ is equal to ${\mathcal R}_{2,T}(G)$
and then use our formula for the rank function of ${\mathcal M}_{T}$ to verify Theorem \ref{C3-thm:int_main1}.




For a fixed nonempty set $T\subseteq V$, 
we define the $T$-{\em value} of an arbitrary set $X\subseteq V$ 
by
$$
\val_T(X)=\left\{
\begin{array}{ll}
2|X|-3  &\mbox{if $X\not\subseteq T$}\\
$0$ &\mbox{if $X\subseteq T$}
\end{array}
\right.
$$
Note that $
\val_T(X)\geq 0$ whenever $|X|\geq 2$. 

We say that a non-empty family $\mathcal{H}=\{ H_1,\ldots,H_k\}$  of subsets of $ V$  is $T$-{\em compatible} if $T$ is a proper subset of $H_i$
for all $1\leq i \leq k$, and define 
its {\em $T$-value} to be 
\[
\val_T(\mathcal{H})=\sum_{i=1}^{k}{(2|H_i\setminus T|-1)}+2(|T|-1).
\]
Note that $
\val_T(\mathcal{H})\geq 0$ since  $|H_i|> |T|\geq 1$ for all $1\leq i\leq k$.

The graph $G$ is said to be $T$-{\em sparse} if
\begin{itemize}
\item $i_G(X)\leq \val_T(X)$  for all
$X\subseteq V$
with $|X|\geq 2$ and 
\item 
$i_G(\Hc)\leq \val_T(\Hc)$  for all $T$-compatible
families $\Hc$.
\end{itemize}
In particular, if $G$
is $T$-sparse, then $i_G(X)\leq 2|X|-3$ for all $X\subseteq V$ with $|X|\geq 2$ so $E$ is independent in $\mathcal{R}_2(G)$ by \cite{PG}.

We motivate these definitions by showing that  $T$-sparsity is a necessary condition for independence in the 2-dimensional $T$-coincident rigidity matroid $\mathcal{R}_{2,T}(G)$.

\begin{lemma}\label{C3-lem:geom_imp_count}
Let $G=(V,E)$ be a graph and let $T\subseteq V$ with $|T|\geq 1$.
Suppose $E$ is independent in $\mathcal{R}_{2,T}(G)$. 
Then $G$ is $T$-sparse.
\end{lemma}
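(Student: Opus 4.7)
The plan is to fix a generic $T$-coincident realisation $(G,p)$ whose rigidity-matrix rows are linearly independent, and derive both sparsity inequalities from (\ref{eq:dimKer}) combined with Maxwell's bound applied to suitable sub-frameworks.

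For the first $T$-sparsity condition, I will fix $X\subseteq V$ with $|X|\geq 2$ and split into cases on $T_X:=X\cap T$. If $X\subseteq T$, every edge of $G[X]$ has coincident end-points in $(G,p)$ and so contributes a zero row to $R(G,p)$; independence then forces $i_G(X)=0=\val_T(X)$. If $X\not\subseteq T$ and $|T_X|\leq 1$, Maxwell's bound applied directly to $(G[X],p|_X)$ gives $i_G(X)\leq 2|X|-3 = \val_T(X)$. If $X\not\subseteq T$ and $|T_X|\geq 2$, applying (\ref{eq:dimKer}) to $G[X]$ with coincidence set $T_X$, and then Maxwell's bound to the contracted framework $(G[X]/T_X, p_{T_X})$, yields
\[
i_G(X)\leq \bigl[2(|X|-|T_X|+1)-3\bigr]+2(|T_X|-1)=2|X|-3=\val_T(X).
\]

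For the family condition, I will take a $T$-compatible $\Hc=\{H_1,\dots,H_k\}$ and work with the subframework $(G_\Hc,p|_{V(\Hc)})$, where $G_\Hc=(V(\Hc),E_G(\Hc))$. The rows of $R(G_\Hc,p|_{V(\Hc)})$ form a subset of rows of $R(G,p)$ and are therefore independent, so $\rank R(G_\Hc,p|_{V(\Hc)})=i_G(\Hc)$. Applying (\ref{eq:dimKer}) to $G_\Hc$ with coincidence set $T$ (valid since $T\subseteq H_i\subseteq V(\Hc)$) produces
\[
i_G(\Hc)\leq \rank R(G_\Hc/T,p_T)+2(|T|-1).
\]
The family $\{H_i/T\}_{i=1}^{k}$ is a cover of the edges of $G_\Hc/T$ in which each member has $|H_i\sm T|+1\geq 2$ vertices. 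Since the row space of $R(G_\Hc/T,p_T)$ is the sum of the row spaces of its submatrices indexed by $E_{G_\Hc/T}(H_i/T)$, subadditivity of rank combined with Maxwell's bound on each $(G_\Hc/T)[H_i/T]$ gives
\[
\rank R(G_\Hc/T,p_T)\leq \sum_{i=1}^{k}\bigl(2|H_i\sm T|-1\bigr),
\]
and combining the two displayed inequalities delivers $i_G(\Hc)\leq \val_T(\Hc)$.

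The main subtlety is that $\Hc$ is not required to be pairwise-disjoint outside $T$, so the cover $\{H_i/T\}$ of $G_\Hc/T$ need not be $1$-thin in the sense of Theorem \ref{thm:LY}; however, subadditivity of the matroid rank function is enough here and is insensitive to overlaps. The remaining step is to verify that (\ref{eq:dimKer}) transfers to the subframework $(G_\Hc,p|_{V(\Hc)})$, which is immediate since its derivation is a statement purely about dimensions of kernels of rigidity matrices and requires no genericity of $p|_{V(\Hc)}$ beyond $T$-coincidence.
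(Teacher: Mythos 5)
Your proposal is correct and follows essentially the same route as the paper: both arguments reduce the set condition to Maxwell's bound applied to the independent subframework $(G[X],p|_X)$ (with the zero-row observation when $X\subseteq T$), and both derive the family condition by combining the kernel inequality (\ref{eq:dimKer}) for the union subgraph with subadditivity of rank over the cover $\{H_i/T\}$ of the contracted framework and Maxwell's bound on each piece. The only difference is your extra case $|X\cap T|\geq 2$ in the set condition, which is harmless but unnecessary, since Maxwell's bound applies directly to $(G[X],p|_X)$ regardless of coincidences.
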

\bproof 
Let $(G,p)$ be a generic $T$-coincident realisation of $G$ in $\R^2$. Then 
$R(G,p)$ has linearly independent rows. This implies that $R(G',p|_{V'})$ has independent rows for any subgraph $G'=(V',E')$ of $G$. 

Choose $X\subseteq V$ with $|X|\geq 2$ and let $J=G[X]$. Then $(J,p|_{X})$ is independent and  (\ref{eq:ind}) gives $i_G(X)=|E(J)|\leq 2|X|-3$. In addition, if $X\subseteq T$ then $i_G(X)=0$, since any edge $ab\in E(G)$ would give rise to a zero row in $R(J,p|_{X})$.  Hence $i_G(X)\leq \val_T(X)$.


Now suppose that $|T|\geq 2$ and let $\Hc=\{H_1,\ldots,H_k\}$ be a $T$-compatible family.
Consider the subgraphs of $G$ given by $L_i=G[H_i]$, $1\leq i\leq k$, and  $L=\bigcup_{i=1}^{k}L_i$.
Let $L_i/T$, respectively $L/T$, be obtained from $L_i$, respectively $L$, by
contracting $T$ into a single vertex $z_T$. Let $(L/T,p_T)$ be the realisation of $L/T$ with $p_T(x)=p(x)$ for $x\neq z_T$ and $p_T(z_T)=p(z)$ for any $z\in T$, and let $p_i$ be the restriction of $p_T$ to the vertices of $L_i/T$.
Then every edge of $L/T$ belongs to one of the subgraphs $L_i/T$
and we can use \ref{eq:ind} to obtain 
$$\rank R(L/{T},p_T)\leq \sum_{i=1}^{k}\rank R(L_i/{T},p_i)\leq \sum_{i=1}^{k}(2|V(L_i/T)|)-3)=\sum_{i=1}^{k}(2|H_i\sm T|-1).$$
We can combine this  bound with (\ref{eq:dimKer}) and the fact that $(J,p|_{X})$ is independent to obtain
$$
i_G(\Hc)=|E(J)|=\rank R(J,p|_{X})\leq
 \sum_{i=1}^{k}(2|H_i\sm T|-1)+2(|T|-1)=\val_T(\Hc).
$$
\eproof

The converse of Lemma \ref{C3-lem:geom_imp_count} holds for $|T|=1,2$. When $|T|=1$, $G$ is $T$-sparse if and only if $i_G(X)\leq 2|X|-3$ for all $X\subseteq V$ with $|X|\geq 2$ and this condition characterises independence in $\mathcal{R}_2(G)$ by \cite{PG}.  
When $|T|=2$, the condition that $G$ is $T$-sparse characterises independence in $\mathcal{R}_{2,T}(G)$ by \cite{FJK}.
When $|T|\geq 3$ we  we need a stronger condition which follows from the fact that an infinitesimally rigid $T$-coincident realisation of $G$ is an 
infinitesimally rigid $S$-coincident realisation of $G$ for all $S\subseteq T$.
Combined with Lemma \ref{C3-lem:geom_imp_count}, this implies that  we need $G$ to be $S$-sparse for all $S\subseteq T$ with $|S|\geq 2$, for $E$ to be independent in $\mathcal{R}_{2,T}(G)$. (Note that we do not need to include the case when $|S|=1$ since this follows immediately from the condition that $G$ is $T$-sparse.) 
%
We will show that this strengthened condition characterises independence in $\mathcal{R}_{2,T}(G)$ when $|T|=3$. We first obtain some preliminary results on $S$-compatible families.

\subsection{\boldmath $S$-compatible families}

\begin{lemma}\label{C3-lem:Hi-intersection}
Let $G=(V,E)$ be a graph, $S\subseteq V$ with $|S|\geq 2$ and $\Hc=\{H_1,\ldots,H_k\}$ be an $S$-compatible family in $G$.
Suppose $|H_i\cap H_j|\geq |S|+1$ for some pair $1\leq i<j\leq k$.
Then there exists an $S$-compatible family $\overline{\Hc}$ with
$\cov(\Hc)\subseteq \cov(\overline{\Hc})$ and
$\val_S(\overline{\Hc})\leq \val_S(\Hc)-1$.
\end{lemma}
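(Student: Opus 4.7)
The plan is to merge the two heavily intersecting sets $H_i$ and $H_j$ into a single set $H_i \cup H_j$ and verify that each of the required properties is preserved with a strict drop in $S$-value. Concretely, I would define
\[
\overline{\Hc} = (\Hc \setminus \{H_i, H_j\}) \cup \{H_i \cup H_j\}.
\]

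First I would check $S$-compatibility: since $S \subsetneq H_i \subseteq H_i \cup H_j$, the merged set still properly contains $S$, and every other member of $\overline{\Hc}$ is inherited from $\Hc$. Next, the cover condition $\cov(\Hc)\subseteq \cov(\overline{\Hc})$ is immediate, because any edge induced by $H_i$ or by $H_j$ is induced by their union.

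The main content is the computation of $\val_S$. Since every set in $\Hc$ properly contains $S$, we have $S \subseteq H_i \cap H_j$, so $|(H_i \cap H_j)\sm S| = |H_i \cap H_j| - |S| \geq 1$ by hypothesis. Using inclusion-exclusion on $H_i \sm S$ and $H_j \sm S$,
\[
|(H_i \cup H_j)\sm S| = |H_i\sm S| + |H_j\sm S| - |(H_i \cap H_j)\sm S|,
\]
and substituting into the definition of $\val_S$ (the $2(|S|-1)$ term cancels) gives
\[
\val_S(\overline{\Hc}) - \val_S(\Hc) = \bigl(2|(H_i\cup H_j)\sm S|-1\bigr) - \bigl(2|H_i\sm S|-1\bigr) - \bigl(2|H_j\sm S|-1\bigr) = 1 - 2|(H_i\cap H_j)\sm S| \leq -1.
\]

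There is no real obstacle here; the lemma is essentially a one-line computation, and the only thing to be careful about is using $|S|\geq 2$ together with proper containment to ensure $S \subseteq H_i \cap H_j$ so that the gain from merging is paid for by the $-2|(H_i\cap H_j)\sm S|$ term rather than being absorbed into the fixed $2(|S|-1)$ offset.
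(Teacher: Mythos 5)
Your proposal is correct and follows essentially the same route as the paper: merge $H_i$ and $H_j$ into their union and observe via inclusion--exclusion that the $S$-value drops by $2|(H_i\cap H_j)\setminus S|-1\geq 1$. The only cosmetic difference is that you compute the difference $\val_S(\overline{\Hc})-\val_S(\Hc)$ directly while the paper rewrites $\val_S(\Hc)$ as $\val_S(\overline{\Hc})$ plus the surplus term; also note that $S\subseteq H_i\cap H_j$ already follows from $S$-compatibility alone, without needing $|S|\geq 2$.
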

\bproof We may assume that $i=k-1$ and $j=k$. Let $\Hco=\{H_1,\ldots,H_{k-2},\overline{H}_{k-1}\}$ where $\overline{H}_{k-1}= H_{k-1}\cup H_k$.
Then we have $\cov(\Hc)\subseteq \cov(\Hco)$ and
\begin{align*}
\val_S(\Hc)&=\sum_{l=1}^{k}{(2|H_l\setminus S|-1)}+2(|S|-1)\\
&=  \sum_{l=1}^{k-2}{(2|H_l\setminus S|-1)}+2(|S|-1)+(2|H_{k-1}\setminus S|-1)+(2|H_k\setminus S|-1)\\
&=  \sum_{l=1}^{k-2}{(2|H_l\setminus S|-1)}+2(|S|-1)+(2|(H_{k-1}\cup H_k)\setminus S|-1)
\\&\indent
+(2|(H_{k-1}\cap H_k)\setminus S|-1)\\
&\geq \val_S(\Hco)+1.
\end{align*}
\eproof

Given an $S$-sparse graph $G=(V,E)$ with $S\subseteq V$, we define a set $H\subseteq V(G)$ with $|H|\geq 2$ to be $S$-{\em tight}
if $i_G(H)=\val_S(H)$. 
Similarly, an $S$-compatible
family $\Hc$ is said to be $S$-{\em tight} 
if $i_G(\Hc)=\val_S(\Hc)$.

\begin{lemma}\label{C3-lem:Hi_Y-int1}
Let $G=(V,E)$ be a graph, $S\subseteq V$ with $|S|\geq 2$ and $\Hc=\{ H_1,\ldots,H_k\}$ be an $S$-compatible family
such that $H_i\cap H_j=S$
for all $1\leq i<j\leq k$. Suppose $Y\subseteq V$ 
with $|Y\cap S|\leq 1$
and $|Y\cap H_i|\geq 2$ for some $1\leq i\leq k$.
Then there exists an $S$-compatible family $\Hco$ with
$\cov(\Hc)\cup \cov(Y)\subseteq \cov(\Hco)$ for which $\val_S(\Hco)\leq \val_S(\Hc)+\val_S(Y)$.
Furthermore, if $G$ is $S$-sparse and $\Hc$ and $Y$ are both $S$-tight,
then $\Hco$ is also $S$-tight.
\end{lemma}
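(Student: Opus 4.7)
The plan is to construct $\overline{\Hc}$ by merging $Y$ with exactly those members of $\Hc$ that it meets in at least two vertices. Set $I=\{i:|Y\cap H_i|\geq 2\}$; by hypothesis $I\neq\emptyset$, and after relabelling assume $I=\{1,\dots,m\}$. Define $\overline{H}_1=H_1\cup\cdots\cup H_m\cup Y$ and $\overline{\Hc}=\{\overline{H}_1,H_{m+1},\dots,H_k\}$. The $S$-compatibility of $\overline{\Hc}$ is immediate, since $\overline{H}_1\supseteq H_1\supsetneq S$ and the other members are unchanged, and the inclusion $\cov(\Hc)\cup\cov(Y)\subseteq\cov(\overline{\Hc})$ is clear because each $H_i$ lies inside some member of $\overline{\Hc}$ and $Y\subseteq\overline{H}_1$.

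The key observation for the value bound is that the hypothesis $H_i\cap H_j=S$ makes the sets $H_i\sm S$ pairwise disjoint, so a direct inclusion--exclusion computation yields $|\overline{H}_1\sm S|=\sum_{i=1}^{m}|H_i\sm S|+a$ with $a:=|Y\sm(S\cup\bigcup_{i=1}^{m}H_i)|$. Substituting into the definition of $\val_S$ gives the identity $\val_S(\overline{\Hc})=\val_S(\Hc)+m+2a-1$, so the required inequality $\val_S(\overline{\Hc})\leq\val_S(\Hc)+\val_S(Y)=\val_S(\Hc)+2|Y|-3$ reduces to $m+2a+2\leq 2|Y|$. A short case analysis on whether $|Y\cap S|$ is $0$ or $1$, using $|Y\cap(H_i\sm S)|\geq 2-|Y\cap S|$ for each $i\in I$, dispatches this.

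For the $S$-tight claim, I start from $i_G(\overline{\Hc})\geq i_G(\Hc)+i_G(Y)-|E_G(\Hc)\cap E_G(Y)|$. Any edge in $E_G(\Hc)\cap E_G(Y)$ must lie inside $Y\cap H_i$ for some $i\in I$, and because $(Y\cap H_i)\cap(Y\cap H_j)=Y\cap S$ has at most one vertex, these contributions are disjoint across $i\in I$. Applying $S$-sparsity of $G$ to each $Y\cap H_i$ (which is not a subset of $S$, since $|Y\cap H_i|\geq 2>|Y\cap S|$) gives $i_G(Y\cap H_i)\leq 2|Y\cap H_i|-3$. Feeding these bounds, together with the tightness identities $i_G(\Hc)=\val_S(\Hc)$ and $i_G(Y)=2|Y|-3$ and the formula for $\val_S(\overline{\Hc})$ above, into the chain reduces the desired inequality $i_G(\overline{\Hc})\geq\val_S(\overline{\Hc})$ to $2(m-1)(1-|Y\cap S|)\geq 0$, which is immediate since $m\geq 1$ and $|Y\cap S|\in\{0,1\}$. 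Combined with $i_G(\overline{\Hc})\leq\val_S(\overline{\Hc})$ coming from $S$-sparsity of $G$ applied to the $S$-compatible family $\overline{\Hc}$, this forces equality and hence $S$-tightness. The main obstacle is choosing the right $\overline{\Hc}$: merging $Y$ with only a single $H_i$ is too conservative for the value bound, while more aggressive modifications introduce intersections larger than $S$ and break the disjointness that drives both the $\val_S$ computation and the bookkeeping in the $S$-tight step.
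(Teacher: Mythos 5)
Your proposal is correct and follows essentially the same route as the paper: you build $\overline{\Hc}$ by merging $Y$ with precisely those $H_i$ meeting $Y$ in at least two vertices (the paper's $X=Y\cup\bigcup_{i=j}^{k}H_i$ is your $\overline{H}_1$ up to relabelling), and both the value bound and the tightness step amount to the same edge-counting, with the paper keeping the surplus terms in the form $\sum_i\val_S(Y\cap H_i)\geq 0$ where you reduce to $(m-1)(1-|Y\cap S|)\geq 0$. The computations check out, so this is just a reorganisation of the paper's argument rather than a different proof.
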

\bproof By reordering the elements of $\Hc$ if necessary,
we may assume that $|Y\cap H_i|\leq 1$ for all $1\leq i\leq j-1$ and $|Y\cap H_i|\geq 2$ for all $j\leq i\leq k$, for some $1\leq j\leq k$.
Let $X=Y\cup \bigcup_{i=j}^{k}{H_i}$
and $\Hco=\{H_1,\ldots,H_{j-1},X\}$. Then we have $\cov(\Hc)\cup \cov(Y)\subseteq \cov(\Hco)$, and
\begin{align*}
&\val_S(\Hc)+\val_S(Y)=\sum_{i=1}^{k}{(2|H_i\setminus S|-1)}+2(|S|-1)+(2|Y|-3)\\
&=\sum_{i=1}^{j-1}{(2|H_i\setminus S|-1)}+2(|S|-1)+\sum_{i=j}^{k}{(2|H_i\setminus S|-1)}+(2|Y|-3)\\
&=\sum_{i=1}^{j-1}{(2|H_i\setminus S|-1)}+2(|S|-1)+(2|X\setminus S|-1)\\
&\indent+2|Y\cap S|-(k-j)+2\sum_{i=j}^{k}{|Y\cap (H_i\setminus S)|}-3\\
&=\sum_{i=1}^{j-1}{(2|H_i\setminus S|-1)}+(2|X\setminus S|-1)+2(|S|-1)\\
&\indent+2|Y\cap S|-(k-j)+2\sum_{i=j}^{k}{|Y\cap H_i|}-2\sum_{i=j}^{k}{|Y\cap S|}-3\\
&=\sum_{i=1}^{j-1}{(2|H_i\setminus S|-1)}+(2|X\setminus S|-1)+2(|S|-1)\\
&\indent-(k-j)+2\sum_{i=j}^{k}{|Y\cap (H_i)|}-2|Y\cap S|(k-j)-3\\
&\geq \sum_{i=1}^{j-1}{(2|H_i\setminus S|-1)}+(2|X\setminus S|-1)+2(|S|-1)+2\sum_{i=j}^{k}{|Y\cap H_i|}-3(k-j+1)\\
&=\sum_{i=1}^{j-1}{(2|H_i\setminus S|-1)}+(2|X\setminus S|-1)+2(|S|-1)+\sum_{i=j}^{k}{(2|Y\cap H_i|-3)}\\
&= \val_S(\Hco)+\sum_{i=j}^{k}{\val_S(Y\cap H_i)}
\end{align*}
where for the inequality step we use $|Y\cap S|\leq 1$.
Since  $\val_S(Y\cap H_i)\geq 0$ for all $j\leq i\leq k$ this gives $\val_S(\Hc)+\val_S(Y)\geq \val_S(\Hco)$.

Now suppose that $G$ is $S$-sparse, and $\Hc$ and $Y$ are $S$-tight. Then we have
\[
i(\Hco)+\sum_{i=j}^{k}{i(Y\cap H_i)}\geq i(\Hc)+i(Y)=\val_S(\Hc)+\val_S(Y)
\]
\[
\geq \val_S(\Hco)+\sum_{i=j}^{k}{\val_S(Y\cap H_i)}\geq i(\Hco)+\sum_{i=j}^{k}{i(Y\cap H_i)},
\]
where the first inequality follows since the edges spanned by $\Hc$ or $Y$
are spanned by $\Hco$ and if some edge is spanned by both $\Hc$ and $Y$, then it is
spanned by $Y\cap H_i$ for some $i$. The first equality holds because $\Hc$ and $Y$
are $S$-tight, and the second inequality holds by our calculations above. The last
inequality holds because $G$ is $S$-sparse. Hence equality must hold everywhere,
which implies that $\Hco$ is also $S$-tight.
\eproof

\begin{lemma}\label{C3-lem:Hi_Y-int0}
Let $G=(V,E)$ be a graph, $S\subseteq V$ with $|S|\geq 2$ and $\Hc =\{H_1,\ldots,H_k\}$ be an $S$-compatible family
such that $H_i\cap H_j=S$ for
all $1\leq i<j\leq k$. Suppose that $Y\subseteq V$ with $Y\cap S=\emptyset$, $|Y\cap H_i|\leq 1$ for all $1\leq i\leq k$ and
$|Y\cap H_a|=|Y\cap H_b|=1$ for some $(a,b)$ with  $1\leq a<b\leq k$.
Then there is an $S$-compatible family $\Hco$ with
$\cov(\Hc)\cup \cov(Y)\subseteq \cov(\Hco)$ for which $\val_S(\Hco)=\val_S(\Hc)+\val_S(Y)$.
Furthermore, if $G$ is $S$-sparse and $\Hc$ and $Y$ are both $S$-tight, then
$\Hco$ is also $S$-tight.
\end{lemma}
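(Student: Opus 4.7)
The plan is to mimic the construction in Lemma~\ref{C3-lem:Hi_Y-int1}, modified by the fact that no single $H_i$ contains two elements of $Y$: we merge $Y$ with \emph{exactly two} of the $H_i$'s, namely the pair guaranteed by hypothesis. Relabel so that $(a,b)=(1,2)$, set $X = H_1 \cup H_2 \cup Y$, and define
\[
\Hco = \{X, H_3, \ldots, H_k\}.
\]
Each $H_i$ with $i\geq 3$ already properly contains $S$, and $X \supseteq H_1 \supsetneq S$, so $\Hco$ is $S$-compatible. Since $Y \subseteq X$ and $H_1, H_2 \subseteq X$, the cover containment $\cov(\Hc) \cup \cov(Y) \subseteq \cov(\Hco)$ is immediate.

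For the value identity, I would compute $|X \setminus S|$ by inclusion--exclusion. Using $H_1 \cap H_2 = S$, $Y \cap S = \emptyset$, and $|Y \cap H_1| = |Y \cap H_2| = 1$, the sets $H_1 \setminus S$ and $H_2 \setminus S$ are disjoint and $Y$ meets each in exactly one element, giving
\[
|X \setminus S| = |H_1 \setminus S| + |H_2 \setminus S| + |Y| - 2.
\]
Substituting this into $\val_S(\Hco) = (2|X \setminus S| - 1) + \sum_{i=3}^{k}(2|H_i \setminus S| - 1) + 2(|S|-1)$ and expanding $\val_S(\Hc) + \val_S(Y) = \sum_{i=1}^{k}(2|H_i \setminus S| - 1) + 2(|S|-1) + (2|Y| - 3)$, the two expressions agree after routine cancellation.

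For the \emph{furthermore} clause, the key observation is that $|Y \cap H_i| \leq 1$ for every $i$ forces $E_G(Y) \cap E_G(H_i) = \emptyset$, since a shared edge would need both endpoints in $Y \cap H_i$. Hence $E_G(Y)$ and $E_G(\Hc)$ are disjoint subsets of $E_G(\Hco)$, giving $i_G(\Hco) \geq i_G(\Hc) + i_G(Y)$. Combining the $S$-sparsity of $G$, the $S$-tightness of $\Hc$ and $Y$, and the value identity yields the sandwich
\[
i_G(\Hco) \leq \val_S(\Hco) = \val_S(\Hc) + \val_S(Y) = i_G(\Hc) + i_G(Y) \leq i_G(\Hco),
\]
so equality holds throughout and $\Hco$ is $S$-tight. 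The subtle point, and what I expect to be the main obstacle, is the choice of how many $H_i$'s to merge with $Y$: merging every $H_i$ with $|Y \cap H_i|=1$ would strictly decrease $\val_S(\Hco)$ by $m-2$ (where $m\geq 2$ is the number merged), so the strict equality required here, in contrast to the mere inequality of Lemma~\ref{C3-lem:Hi_Y-int1}, forces the construction to use exactly the pair $(a,b)$ and no more.
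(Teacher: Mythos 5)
Your proposal is correct and follows essentially the same route as the paper's proof: both merge $Y$ with exactly the two sets $H_a$ and $H_b$ into a single new member of the family, verify the value identity via the disjointness of $H_a\setminus S$ and $H_b\setminus S$ and the two single-point intersections with $Y$, and obtain tightness from the same sandwich of inequalities. The only cosmetic difference is the relabelling ($(a,b)=(1,2)$ versus $(k-1,k)$) and your explicit inclusion--exclusion computation of $|X\setminus S|$ in place of the paper's term-by-term rearrangement.
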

\bproof
We may assume that $a=k-1$ and $b=k$.
Let $\Hco=\{H_1,\ldots,H_{k-2},\overline{H}_{k-1}\}$ where $\overline{H}_{k-1}=H_{k-1}\cup H_k\cup Y$. Then $\cov(\Hc)\cup \cov(Y)\subseteq \cov(\Hco)$ and we have
\begin{align*}
&\val_S(\Hc)+\val_S(Y)=\sum_{i=1}^{k}{(2|H_i\setminus S|-1)}+2(|S|-1)+(2|Y|-3)\\
&=\sum_{i=1}^{k-2}{(2|H_i\setminus S|-1)}+2(|S|-1)+(2|H_{k-1}\setminus S|-1)+(2|H_k\setminus S|-1)+(2|Y|-3)\\
&=\sum_{i=1}^{k-2}{(2|H_i\setminus S|-1)}+2(|S|-1)+(2(|H_{k-1}\setminus S|+|H_k\setminus S|+|Y|)-1)-4\\
&=\sum_{i=1}^{k-2}{(2|H_i\setminus S|-1)}+(2|(H_{k-1}\cup H_k\cup Y)\setminus S|-1)+2(|S|-1)\\
&\indent+2|Y\cap (H_{k-1}\setminus S)|+2|Y\cap (H_k\setminus S)|-4\\
&=\val_S(\Hco).
\end{align*}

Now suppose that $G$ is $S$-sparse and $\Hc$ and $Y$ are $S$-tight. Then we have
\[
i(\Hc)+i(Y)=\val_S(\Hc)+\val_S(Y)=\val_S(\Hco)\geq i(\Hco)\geq i(\Hc)+i(Y)
\]
where the last inequality follows since $|Y\cap H_i|\leq 1$ for all $1\leq i\leq k$.
Hence equality must hold everywhere which implies that $\Hco$ is also $S$-tight.
\eproof

\begin{lemma}\label{C3-lem:tight_set_int}
Let $G=(V,E)$ be an $S$-sparse graph for some $S\subseteq V$ with $|S|\geq 2$. Suppose
$X,Y\subseteq V$ are $S$-tight sets in $G$ with $|X\cap Y|\geq 2$ and $X,Y\not\subseteq S$. Then $X\cap Y\not\subseteq S$, and $X\cup Y$ and $X\cap Y$ are both $S$-tight. 
\end{lemma}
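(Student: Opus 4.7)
The plan is to exploit the modularity of $\val_S$ on sets not contained in $S$, together with the standard edge-submodular inequality
\[i_G(X) + i_G(Y) \leq i_G(X\cup Y) + i_G(X \cap Y),\]
which holds for any two vertex subsets $X,Y$.

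I would first dispose of the case $X\cap Y\subseteq S$. Suppose for a contradiction that $X\cap Y\subseteq S$. Since $|X\cap Y|\geq 2$, the $S$-sparsity of $G$ forces $i_G(X\cap Y)\leq \val_S(X\cap Y)=0$. Substituting into the submodular inequality, and using the $S$-tightness $i_G(X)=2|X|-3$, $i_G(Y)=2|Y|-3$ (available because $X,Y\not\subseteq S$), yields the lower bound $i_G(X\cup Y)\geq 2|X|+2|Y|-6$. On the other hand $X\not\subseteq S$ gives $X\cup Y\not\subseteq S$, so $S$-sparsity applied to $X\cup Y$ gives the upper bound $i_G(X\cup Y)\leq 2|X\cup Y|-3 = 2|X|+2|Y|-2|X\cap Y|-3$. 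Comparing the two bounds forces $|X\cap Y|\leq 3/2$, contradicting $|X\cap Y|\geq 2$.

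Having established $X\cap Y\not\subseteq S$ (and hence also $X\cup Y\not\subseteq S$), the function $\val_S$ is genuinely modular on the four sets in question:
\[\val_S(X)+\val_S(Y)=(2|X|-3)+(2|Y|-3)=(2|X\cup Y|-3)+(2|X\cap Y|-3)=\val_S(X\cup Y)+\val_S(X\cap Y).\]
Combining this identity with submodularity, $S$-sparsity, and the tightness of $X,Y$ produces the chain
\[\val_S(X\cup Y)+\val_S(X\cap Y)=i_G(X)+i_G(Y)\leq i_G(X\cup Y)+i_G(X\cap Y)\leq \val_S(X\cup Y)+\val_S(X\cap Y),\]
so equality holds throughout, whence both $X\cup Y$ and $X\cap Y$ are $S$-tight.

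The main obstacle is the first step, since the second step is then the standard ``submodular plus modular forces tight'' argument familiar from matroid counting proofs. What makes the first step work is the asymmetry in the definition of $\val_S$: moving from $X\cap Y \subseteq S$ to $X\cup Y\not\subseteq S$, the upper bound $\val_S$ only rises by $2|X\cup Y\setminus S|-3$ worth of slack, while the combined tightness of $X$ and $Y$ already demands nearly that many edges, leaving no room for a large intersection unless $X\cap Y$ escapes $S$.
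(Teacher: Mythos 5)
Your proof is correct and follows essentially the same route as the paper: both arguments combine the submodularity $i_G(X)+i_G(Y)\leq i_G(X\cup Y)+i_G(X\cap Y)$ with $S$-sparsity and the tightness of $X$ and $Y$, the only difference being that the paper extracts $\val_S(X\cap Y)\geq 2|X\cap Y|-3\geq 1$ directly from one chain of inequalities while you isolate the case $X\cap Y\subseteq S$ as a separate contradiction. The content and the arithmetic are identical.
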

\bproof
First note that, since $G$ is $S$-sparse, we have
\begin{align*}
2|X|-3+2|Y|-3=\val_S(X)+\val_S(Y)
&=i(X)+i(Y)\\
                                 &\leq i(X\cap Y)+i(X\cup Y)\\
								 &\leq \val_S(X\cap Y)+\val_S(X\cup Y)\\
								 &=\val_S(X\cap Y)+2|X\cup Y|-3.
\end{align*}
This implies that $\val_S(X\cap Y)\geq 1$ and hence $X\cap Y\not\subseteq S$.
%
This gives
$\val_S(X\cap Y)=2|X\cap Y|-3$ and hence equality must hold throughout the above sequence of (in)equalities. In particular,
$\val_S(X\cup Y)=i(X\cup Y)$ and $\val_S(X\cap Y)=i(X\cap Y)$, so
$X\cup Y$ and $X\cap Y$ are $S$-tight.
\eproof

Given a graph $G=(V,E)$  and $T\subseteq V$ with $|T|\geq 1$, we say  that $G$ is {\em strongly $T$-sparse} if $G$ is $S$-sparse for all $\emptyset\neq S\subseteq T$. 
\begin{lemma}\label{C3-lem:max_cover}
Suppose $\Hc_i$ is an $S_i$-compatible family in a graph $G=(V,E)$ for $i=1,2$ and $S_1\cap S_2\neq \emptyset$. Then there exists an $(S_1\cup S_2)$-compatible family in $G$ with $\cov(\Hc_1)\cup \cov(\Hc_2)\subseteq \cov(\Hc)$. Furthermore, if $G$ is strongly $(S_1\cup S_2)$-sparse and $\Hc_i$ is $S_i$-tight for $1\leq i\leq 2$, then $\Hc$ is $(S_1\cup S_2)$-tight.
\end{lemma}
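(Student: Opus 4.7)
The plan is to construct $\Hc$ by enlarging each member of $\Hc_1$ and $\Hc_2$ so that it properly contains $S_1\cup S_2$, and then verify tightness by matching upper and lower bounds on $i_G(\Hc)$. Setting $S=S_1\cup S_2$, I would define
$$\Hc=\{H_{1,j}\cup S_2:1\le j\le k_1\}\cup\{H_{2,j}\cup S_1:1\le j\le k_2\},$$
discarding duplicates and any member equal to $S$; if the construction yields the empty family (a degenerate case incompatible with the ``furthermore'' hypothesis, as tightness forces at least one $H_{i,j}$ to contain a vertex outside $S$), we replace $\Hc$ by $\{S\cup\{v\}\}$ for any $v\in V\sm S$. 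Each surviving set strictly contains $S$, so $\Hc$ is $S$-compatible. The inclusions $H_{1,j}\subseteq H_{1,j}\cup S_2$ and $H_{2,j'}\subseteq H_{2,j'}\cup S_1$ immediately give $\cov(\Hc_1)\cup\cov(\Hc_2)\subseteq\cov(\Hc)$, and any pair lying in a discarded set already lies in $S$ and hence in every surviving member of $\Hc$, so no coverage is lost.

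For the tightness claim, the upper bound $i_G(\Hc)\le\val_S(\Hc)$ is immediate from $S$-sparsity, which follows from strong $S$-sparsity. For the reverse inequality, a direct expansion using $|(H_{1,j}\cup S_2)\sm S|=|H_{1,j}\sm S_1|-|H_{1,j}\cap(S_2\sm S_1)|$ and $|S|=|S_1|+|S_2|-|S_1\cap S_2|$ produces
$$\val_S(\Hc)=\val_{S_1}(\Hc_1)+\val_{S_2}(\Hc_2)-2(|S_1\cap S_2|-1)-2\Delta,$$
where $\Delta=\sum_j|H_{1,j}\cap(S_2\sm S_1)|+\sum_{j'}|H_{2,j'}\cap(S_1\sm S_2)|\ge 0$. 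Combined with tightness $\val_{S_i}(\Hc_i)=i_G(\Hc_i)$ and the decomposition
$$i_G(\Hc)=i_G(\Hc_1)+i_G(\Hc_2)-|E_G(\Hc_1)\cap E_G(\Hc_2)|+|E_G(\Hc)\sm(E_G(\Hc_1)\cup E_G(\Hc_2))|,$$
the desired inequality reduces to showing that the overlap edges $E_G(\Hc_1)\cap E_G(\Hc_2)=\bigcup_{j,j'}E_G(H_{1,j}\cap H_{2,j'})$ are accounted for by the new ``cross'' edges introduced by the enlargement together with the allowance $2(|S_1\cap S_2|-1)+2\Delta$.

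The main obstacle is this compensating count. Every intersection $H_{1,j}\cap H_{2,j'}$ contains $S_1\cap S_2$, which is nonempty by hypothesis. When $|S_1\cap S_2|\ge 2$, I would bound the overlap edges by applying Lemma~\ref{C3-lem:tight_set_int} with reference set $S_1\cap S_2$ (legitimate by strong $S$-sparsity) and using its intersection/union formulas to supply the required tightness on each intersection set. The delicate case is $|S_1\cap S_2|=1$: the intersections can be singletons and Lemma~\ref{C3-lem:tight_set_int} no longer applies. There I would instead invoke Lemma~\ref{C3-lem:Hi_Y-int0}, which is tailored precisely to single-vertex intersections, to relate the overlap edges to the cross edges between $H_{1,j}\sm S$ and $S_2\sm S_1$ (and their symmetric counterparts), and thereby close the accounting.
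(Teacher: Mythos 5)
There is a genuine gap in the ``furthermore'' part: your family $\Hc$ is in general not $(S_1\cup S_2)$-tight, because your construction never merges a member of $\Hc_1$ with a member of $\Hc_2$ that it meets \emph{outside} $S_1\cup S_2$, and such overlaps inflate $\val_S(\Hc)$ beyond the number of edges the family can possibly cover. Concretely, take $S_1=\{u,v\}$, $S_2=\{v,w\}$, $\Hc_1=\{\{u,v,a,x\}\}$, $\Hc_2=\{\{v,w,b,x\}\}$, where $G$ consists of $K_4$ on $\{u,v,a,x\}$ minus $uv$ together with $K_4$ on $\{v,w,b,x\}$ minus $vw$ (nine edges on six vertices). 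Both families are tight, with $\val_{S_i}(\Hc_i)=5=i_G(\Hc_i)$, and $G$ is strongly $\{u,v,w\}$-sparse. Your $\Hc=\{\{u,v,w,a,x\},\{u,v,w,b,x\}\}$ has $\Delta=0$ and $\val_S(\Hc)=3+3+4=10$, yet every edge it covers lies inside the six-element vertex set, so $i_G(\Hc)\leq 2\cdot 6-3=9$. The compensating count you propose therefore cannot be closed: the doubly covered edges contribute $1$ (the edge $vx$), while your allowance $2(|S_1\cap S_2|-1)+2\Delta$ plus the new cross edges equals $0$. Neither Lemma \ref{C3-lem:tight_set_int} nor Lemma \ref{C3-lem:Hi_Y-int0} rescues this step: the intersections $H_{1,j}\cap H_{2,j'}$ are not known to be tight sets, and the configuration does not satisfy the disjointness hypotheses of those lemmas. (The existence and coverage part of your argument is fine.)

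The paper's proof repairs exactly this defect. It forms the bipartite graph on $\Hc_1\cup\Hc_2$ whose edges record the pairs with $(H_i\sm S_1)\cap(\overline{H}_j\sm S_2)\neq\emptyset$ and takes one member $V_i\cup S_1\cup S_2$ of the new family per connected component, so overlapping members get merged; in the example above this yields the single set $V$ with value $9$, which is tight. The doubly covered edges are then controlled not by an ad hoc count but by the $(S_1\cap S_2)$-compatible family $\Hc_{\text{int}}$ of pairwise intersections, whose value bound is available because $G$ is strongly sparse and $S_1\cap S_2\neq\emptyset$; the component inequality $r+|\mathcal{E}|\geq k+l$ and a vertex double count then force equality throughout the chain. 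If you want to retain your ``enlarge each member'' construction, you must first replace each connected component of this auxiliary bipartite graph by the union of its members; without that merging step the tightness claim is simply false.
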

\bproof 
Let $\Hc_1=\{H_1,\ldots,H_k\}$ and $\Hc_2=\{\overline{H}_1,\ldots,\overline{H}_l\}$.
Let $\mathcal{G}=(\mathcal{V},\mathcal{E})$ be the bipartite
graph with vertex bipartition $(\Hc_1,\Hc_2)$,
and edge set
\[
\mathcal{E}:=\{H_i\overline{H}_j:|(H_i\setminus S_1)\cap (\overline{H}_j\setminus S_2)|\geq 1, 1\leq i\leq k, 1\leq j\leq l\}.
\]
Let $\mathcal{G}_i=(\mathcal{V}_i,\mathcal{F}_i)$, $1\leq i\leq r$, be the connected components of $\mathcal{G}$ and
put $V_i=\bigcup_{H\in \mathcal{V}_i}{H}$ for  $1\leq i\leq r$. Let
\[
\Hc_{\text{union}}=\{V_i\cup S_1\cup S_2:1\leq i\leq r\} \mbox{ and }
\Hc_{\text{int}}=\{H_i\cap \overline{H}_j:H_i\overline{H}_j\in \mathcal{E}\}.
\]
Then  $\Hc_{\text{int}}$ is $(S_1\cap S_2)$-compatible, $\Hc_{\text{union}}$ is $(S_1\cup S_2)$-compatible and we have
$\cov(\Hc_1)\cup \cov(\Hc_2)\subseteq \cov(\Hc_{\text{union}})$.

Now suppose that $G$ is strongly $(S_1\cup S_2)$-sparse and $\Hc_i$ is $S_i$-tight for $1\leq i\leq 2$.
We will show that $\Hc_{\text{union}}$ is $(S_1\cup S_2)$-tight to complete the proof.
Every edge in $E$ which is covered by 
either $\Hc_1$ or $\Hc_2$ is covered by $\Hc_{\text{union}}$ and every edge
covered by both $\Hc_1$ and $\Hc_2$ is covered by $\Hc_{\text{int}}$. This implies that
$i(\Hc_1)+i(\Hc_2)\leq i(\Hc_{\text{union}})+i(\Hc_{\text{int}})$.
Since $|\mathcal{V}|=k+l$ and $r$ is the number of connected components of $\mathcal{G}$,
\begin{equation}\label{eq:rEkl1}
r+|\mathcal{E}|\geq k+l.
\end{equation}
We also have
\begin{equation}\label{eq:double-count1}
\begin{aligned}
&\sum_{i=1}^{r}{(|V_i\cup S_1\cup S_2|-|S_1\cup S_2|)}+\sum_{H_i\overline{H}_j\in \mathcal{E}}{(|H_i\cap \overline{H}_j|-|S_1\cap S_2|)}\\
=&\sum_{i=1}^{k}{(|H_i|-|S_1|)}+\sum_{i=1}^{l}{(|\overline{H}_i|-|S_2|)}
\end{aligned}
\end{equation}
as a vertex $x\notin S_1\cup S_2$ contributes the same amount (one or two) to both sides of (\ref{eq:double-count1}),
and a vertex $s\in S_1\cup S_2$ contributes zero to both sides of (\ref{eq:double-count1}).

Then since $\Hc_{\text{int}}$ is  $(S_1\cap S_2)$-compatible and $\Hc_{\text{union}}$ is $(S_1\cup S_2)$-compatible, we have
\begin{align*}
&\sum_{i=1}^{k}{(2|H_i\setminus S_1|-1)}+2(|S_1|-1)+\sum_{i=1}^{l}{(2|\overline{H}_i\setminus S_2|-1)}+2(|S_2|-1)\\
&=\val_{S_1}(\Hc_1)+\val_{S_2}(\Hc_2)\\
&=i(\Hc_1)+i(\Hc_2)\\
&\leq i(\Hc_{\text{union}})+i(\Hc_{\text{int}})\\
&\leq \val_{S_1\cup S_2}(\Hc_{\text{union}})+\val_{S_1\cap S_2}(\Hc_{\text{int}})\\
&= \sum_{i=1}^{r}{(2|(V_i\cup S_1 \cup S_2)\setminus (S_1\cup S_2)|-1)}+2(|S_1\cup S_2|-1)\\
&\indent +\sum_{H_i\overline{H}_j\in \mathcal{E}}(2|(H_i\cap \overline{H}_j)\setminus (S_1\cap S_2)|-1)+2(|S_1\cap S_2|-1)\\
&=\sum_{i=1}^{r}{2(|V_i\cup S_1\cup S_2|-|S_1\cup S_2|)}+2(|S_1\cup S_2|-1)-r\\
&\indent +\sum_{H_i\overline{H}_j\in \mathcal{E}}{2(|H_i\cap \overline{H}_j|-|S_1\cap S_2|)}+2(|S_1\cap S_2|-1)-|\mathcal{E}|\\
&\leq \sum_{i=1}^{k}{2(|H_i|-|S_1|)}+\sum_{i=1}^{l}{2(|\overline{H}_i|-|S_2|)}\\
&\indent +2(|S_1\cup S_2|-1)+2(|S_1\cap S_2|-1)-k-l\\
&= \sum_{i=1}^{k}{2(|H_i|-|S_1|)}+\sum_{i=1}^{l}{2(|\overline{H}_i|-|S_2|)}+2|S_1|+2|S_2|-2-2-k-l\\
&= \sum_{i=1}^{k}{(2|H_i\setminus S_1|-1)}+2(|S_1|-1)+\sum_{i=1}^{l}{2|\overline{H}_i\setminus S_2|-1}+2(|S_2|-1),
\end{align*}
where the third inequality follows from (\ref{eq:rEkl1}) and (\ref{eq:double-count1}),
and the second last equality follows from the formula
$|S_1\cup S_2|+|S_1\cap S_2|=|S_1|+|S_2|$.
Therefore equality must hold throughout. Hence we can deduce that
$\Hc_{\text{union}}$ is $(S_1\cap S_2)$-tight (and $\Hc_{\text{int}}$ is $(S_1\cap S_2)$-tight).
\eproof

\subsection{\boldmath The matroid ${\mathcal M}_T(G)$}

Given a graph $G=(V,E)$ and a set $T\subseteq V$, we 
will show that the family
$
\mathcal{I}_T=\{I\subseteq E : G'=(V,I) \mbox{ is strongly $T$-sparse}\}
$
is the family of independent sets of a matroid ${\mathcal M}_T(G)$ on $E$ when $1\leq |T|\leq 3$. 

We need the following
concept.
For $S\subseteq V$ with $|S|\geq 2$, an {\em augmented $S$-compatible family} is a collection $\mathcal{L}=\{\Hc,X_1,\ldots,X_k\}$
where $\Hc$ is a (possibly empty) $S$-compatible family of subsets of $V$  and  $X_1,\ldots,X_k$ are subsets of $V$ of size at least two. 
We say that $\mathcal{L}$ {\em covers} an  edge $e\in  E$ if $e$ is induced by either a set $H\in \mathcal H$ or by one of the sets $X_i$ for some $1\leq i\leq k$ and let $i_G(\mathcal{L})$ be the number of edges of $G$ which are covered by $\mathcal{L}$. The family
$\mathcal{L}$ is $1$-{\em thin} if:
\begin{itemize}
%
%
%
\item[(T.1)] $|X_i\cap X_j|\leq 1$ for all pairs $1\leq i<j\leq k$;
\item[(T.2)]  $H_i\cap H_j=S$ for all distinct $H_i,H_j\in \Hc$; 
\item[(T.3)]  $|X_i\cap \bigcup_{H\in {\mathcal H}}H|\leq 1$ for all $1\leq i\leq k$.
\end{itemize}
We define the {\em $S$-value} of $\mathcal{L}$ to be
\[
\val_S(\mathcal{L})=
\left\{
\begin{array}{cl}
 \val_S(\Hc)+\sum_{i=1}^{k}(2|X_i|-3),&\text{if }\Hc\neq \emptyset\\[1mm]
\sum_{i=1}^{k}(2|X_i|-3), &\text{if }\Hc=\emptyset\,.
\end{array}
\right. 
\]
Note that, 
\begin{equation}\label{eq:thin}
\mbox{if $G$ is $S$-sparse and $\mathcal{L}$ is  $1$-thin,
then $i_G(\mathcal{L})\leq \val_S(\mathcal{L}).$}
\end{equation}

Recall that a family $\mathcal{I}$ of  subsets  of a finite set $E$ is the family of independent sets  in a matroid on $E$ if it satisfies the following 
three axioms:
\begin{itemize}
\item[(I.1)] $\emptyset\in \mathcal{I}$;
\item[(I.2)] if $I\in \mathcal{I}$ and $I'\subseteq I$, then $I'\in \mathcal{I}$;
\item[(I.3)] for all $E'\subseteq E$, every  maximal element of $\{I\in\mathcal I:I\subseteq E'\}$ 
has the same cardinality.
\end{itemize}

We will use these axioms to verify that $\mathcal{I}_T$ is the independent set system of a matroid and determine its rank function.  

\begin{theorem}\label{C3-thm:count_matroid}
Let $G=(V,E)$ be a graph, $T\subseteq V$ with $1\leq |T|\leq 3$, and $E_T$ be the set of edges of $G$ induced by $T$. Then
\begin{equation}\label{eq:matroid}
\mathcal{I}_T=\{I\subseteq E : G'=(V,I) \mbox{ is strongly $T$-sparse}\}
\end{equation}
is the family of independent sets in a matroid $\mathcal M_T(G)$ on $E$.
In addition, the rank of any $E'\subseteq E$ in $\mathcal M_T(G)$ is given by
$r(E')=\min\{\val_S(\mathcal{L})\}$ where the minimum is taken over all $S\subseteq T$ with $|S|\geq 2$ and all $1$-thin, augmented $S$-compatible families  
$\mathcal L$ which cover 
$E'\sm E_T$.
%
\end{theorem}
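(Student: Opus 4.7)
My plan is to verify the matroid axioms (I.1)--(I.3) and establish the rank formula simultaneously. Axioms (I.1) and (I.2) are immediate: the empty set of edges is trivially strongly $T$-sparse, and deleting edges preserves all the counting conditions in the definition of $T$-sparsity. For (I.3) together with the rank formula, I would fix $E' \subseteq E$, let $f(E')$ denote the minimum described in the theorem, and show that every maximal $I \in \mathcal{I}_T$ with $I \subseteq E'$ satisfies $|I| = f(E')$. Since $f(E')$ does not depend on the chosen $I$, this gives (I.3) and the rank formula at once.

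The upper bound $|I| \leq f(E')$ is straightforward. When $|T|\geq 2$, strong $T$-sparsity applied to each pair $S \subseteq T$ with $|S|=2$ forces $i_{(V,I)}(S) \leq \val_S(S) = 0$, hence $I \cap E_T = \emptyset$ and so $I \subseteq E' \setminus E_T \subseteq \cov(\mathcal{L})$ for every admissible $\mathcal{L}$. The observation \eqref{eq:thin}, applied to the $S$-sparse graph $(V, I)$, then gives $|I| = i_{(V,I)}(\mathcal{L}) \leq \val_S(\mathcal{L})$.

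The core of the proof is the reverse inequality: for any maximal $I$, I must exhibit $S \subseteq T$ and a 1-thin augmented $S$-compatible family $\mathcal{L}$ covering $E' \setminus E_T$ with $\val_S(\mathcal{L}) = |I|$. The strategy is to construct $\mathcal{L}$ so that $\val_S(\mathcal{L}) = i_J(\mathcal{L})$ where $J = (V,I)$; since $\mathcal{L}$ will cover $E' \setminus E_T \supseteq I$, this forces $i_J(\mathcal{L}) = |I|$. For each edge $e \in E' \setminus (I \cup E_T)$, maximality of $I$ forces a violation of strong $T$-sparsity in $(V, I+e)$, producing in $J$ either an $S_e$-tight set $X_e$ covering $e$, or an $S_e$-tight $S_e$-compatible family $\mathcal{H}_e$ covering $e$, for some $\emptyset \neq S_e \subseteq T$. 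I then iteratively combine these tight witnesses, along with trivial tight single-edge sets for any remaining uncovered edges of $I$, into a single augmented $S$-compatible family via Lemmas \ref{C3-lem:Hi-intersection}--\ref{C3-lem:max_cover}: Lemma \ref{C3-lem:tight_set_int} merges tight sets whose intersection has size at least two; Lemma \ref{C3-lem:max_cover} unifies an $S_1$-tight family with an $S_2$-tight family when $S_1 \cap S_2 \neq \emptyset$, producing an $(S_1 \cup S_2)$-tight family; Lemmas \ref{C3-lem:Hi_Y-int1} and \ref{C3-lem:Hi_Y-int0} absorb loose tight sets into the existing compatible family while preserving tightness; and Lemma \ref{C3-lem:Hi-intersection} restores (T.2) whenever members of $\mathcal{H}$ intersect in more than $|S|$ vertices.

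The main obstacle is to orchestrate this combination process so that the final family is simultaneously 1-thin, $S$-compatible for a single $S \subseteq T$, and tight in $J$. Two subtleties stand out. First, witnesses for different edges may involve different subsets $S_e \subseteq T$, so unification via Lemma \ref{C3-lem:max_cover} is essential; the hypothesis $|T|\leq 3$ enters precisely here, since Lemma \ref{C3-lem:max_cover} requires $G$ to be strongly $(S_1 \cup S_2)$-sparse, which is automatic only when $S_1 \cup S_2 \subseteq T$. Second, each combination step may introduce new violations of the 1-thin conditions (T.1)--(T.3), so the construction must be shown to terminate; I would use the strict decrease in $\val_S(\mathcal{L})$ guaranteed by Lemma \ref{C3-lem:Hi-intersection} together with the nonnegativity of $\val_S$, and appeal to the tightness-preserving clauses of Lemmas \ref{C3-lem:Hi_Y-int1}, \ref{C3-lem:Hi_Y-int0}, and \ref{C3-lem:tight_set_int} to control the final value and guarantee the identity $\val_S(\mathcal{L})=|I|$.
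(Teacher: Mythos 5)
Your proposal is correct and follows essentially the same route as the paper: the upper bound comes from (\ref{eq:thin}), and the lower bound from converting the tight witnesses forced by maximality of $I$ into a single tight, $1$-thin, augmented $S$-compatible family using Lemmas \ref{C3-lem:Hi-intersection}--\ref{C3-lem:max_cover}. The paper organizes the combination step top-down --- it first extracts one maximal $S$-tight, $S$-compatible family via Lemma \ref{C3-lem:max_cover} and then takes the maximal $S$-tight sets of the residual graph $(V,F\setminus E_J(\mathcal{H}))$ --- rather than iteratively merging per-edge witnesses, but the content and the role of each lemma are the same.
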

\bproof We show that $\mathcal{I}_T$ satisfies the independence axioms
(I.1), (I.2) and (I.3).
Since (I.1) and (I.2) follow immediately from the definition of $\mathcal{I}_T$, we only need to verify that  (I.3) holds. Choose $E'\subseteq E$.
We first show that
\begin{claim}\label{clm:rankF}
Let $F$ be a maximal element of $\{I\in{\mathcal I}_T:I\subseteq E'\}$.
Then $|F|=\val_S(\mathcal{L})$
for some $S\subseteq T$ with $|S|\geq 2$ and some $1$-thin, augmented $S$-compatible family  
$\mathcal L$ which covers 
$E'\sm E_T$.
\end{claim}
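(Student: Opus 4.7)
The plan is to construct $\Lc$ from ``tight blockers'' of the edges in $E'\sm F$ and merge them via Lemmas \ref{C3-lem:Hi-intersection}--\ref{C3-lem:max_cover}. First I observe $F\cap E_T=\emptyset$: an edge $uv\in F\cap E_T$ (with $|T|\ge 2$) would give $i_F(\{u,v\})=1>0=\val_{\{u,v\}}(\{u,v\})$, contradicting $\{u,v\}$-sparsity of $F$. Hence $F\subseteq E'\sm E_T$, so any $1$-thin augmented $S$-compatible family $\Lc$ covering $E'\sm E_T$ has $i_F(\Lc)=|F|$, and (\ref{eq:thin}) yields $|F|\le \val_S(\Lc)$. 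It therefore suffices to exhibit such an $\Lc$ with equality, i.e.\ one that is \emph{$F$-tight}.

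For each $e=xy\in E'\sm F$, the maximality of $F$ in $\mathcal{I}_T$ forces $F\cup\{e\}$ to violate $S_e$-sparsity for some $\emptyset\ne S_e\subseteq T$, yielding either an $S_e$-tight set $X_e\supseteq\{x,y\}$ or (when $|S_e|\ge 2$) an $S_e$-tight $S_e$-compatible family $\Hc_e$ covering $e$. Since any two $2$-subsets of the $3$-element set $T$ intersect, Lemma \ref{C3-lem:max_cover} applies iteratively to the family-type blockers, producing a single $F$-tight $S^*$-compatible family $\Hc^*$ with $S^*=\bigcup_e S_e\subseteq T$ (or $\Hc^*=\emptyset$ and $S^*$ any $2$-subset of $T$ if no case (b) blocker exists). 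Lemma \ref{C3-lem:Hi-intersection} combined with $S^*$-sparsity of $F$ then forces $H_i\cap H_j=S^*$ for every pair in $\Hc^*$. The set-type blockers, together with trivial $\{x,y\}$ padding for each $e=xy\in F$ not yet covered, form the $X$-part of $\Lc$.

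I next iteratively enforce $1$-thinness: if two $X$-sets meet in $\ge 2$ vertices, apply Lemma \ref{C3-lem:tight_set_int} to replace them by their union; if an $X$-set meets some $H_i\in \Hc^*$ in $\ge 2$ vertices, apply Lemma \ref{C3-lem:Hi_Y-int1}; if an $X$-set meets two distinct $H_i,H_j$ each in exactly one vertex (and has $|X\cap S^*|\le 1$), apply Lemma \ref{C3-lem:Hi_Y-int0}. Each lemma preserves $F$-tightness via its ``furthermore'' clause. Padding with $\{x,y\}$ for an uncovered edge preserves $1$-thinness because the endpoints must meet the existing parts in at most one vertex each (otherwise $xy$ would already be covered), and preserves tightness since $\val_{S^*}(\{x,y\})=1=i_F(\{x,y\})$.

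\textbf{Main obstacle.} The central technical issue is to verify that the iterative merging terminates with a $1$-thin family without losing $F$-tightness or edge coverage, since resolving one violation of (T.1), (T.2), or (T.3) may introduce another. A monotone potential such as the lexicographic pair (number of parts of $\Lc$, $\val_{S^*}(\Lc)$) should strictly decrease at each merge and thereby guarantee termination. The trivial case $|T|=1$ is handled separately by Lov\'asz--Yemini (Theorem \ref{thm:LY}).
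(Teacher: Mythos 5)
Your overall strategy (reduce to exhibiting a $1$-thin, tight, augmented $S$-compatible cover of $F\cup(E'\sm(F\cup E_T))$, so that $|F|=i_J(\Lc)=\val_S(\Lc)$) is the right one, and your preliminary observations ($F\cap E_T=\emptyset$, the $\leq$ direction via (\ref{eq:thin}), the per-edge blockers coming from maximality of $F$, and the use of Lemma \ref{C3-lem:max_cover} to merge the family-type blockers into a single $\Hc^*$) all match the paper. But the route you take to $1$-thinness is different from the paper's, and it is exactly there that your argument has a genuine gap. You build $\Lc$ bottom-up from all the blockers plus $\{x,y\}$-padding and then propose to repair violations of (T.1)--(T.3) by iterated applications of Lemmas \ref{C3-lem:tight_set_int}, \ref{C3-lem:Hi_Y-int1} and \ref{C3-lem:Hi_Y-int0}; you yourself flag the verification that this process terminates in a $1$-thin family, while preserving tightness and coverage, as the ``main obstacle,'' and you do not close it. One concrete failure point: if an $X$-part satisfies $|X\cap S^*|\geq 2$, neither Lemma \ref{C3-lem:Hi_Y-int1} (which requires $|Y\cap S|\leq 1$) nor Lemma \ref{C3-lem:Hi_Y-int0} (which requires $Y\cap S=\emptyset$) applies, so your iteration stalls; the correct move is to reinterpret $X$ as a one-member $(X\cap S^*)$-compatible family and absorb it into $\Hc^*$ via Lemma \ref{C3-lem:max_cover}, but this case does not appear in your plan. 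Your potential function does handle termination (every merge strictly reduces the number of parts), but the case analysis guaranteeing that some merging lemma is always applicable is the real content, and it is missing.

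The paper avoids the iteration entirely by a top-down construction. It first uses Lemma \ref{C3-lem:max_cover} to choose an $S$-tight $S$-compatible family $\Hc$ in $J$ whose cover dominates that of \emph{every} tight compatible family, and then takes $X_1,\dots,X_k$ to be the \emph{maximal} $S$-tight sets of the residual graph $J'=(V,F\sm E_J(\Hc))$ rather than of $J$. Maximality then yields (T.1) directly from Lemma \ref{C3-lem:tight_set_int}, (T.2) from Lemma \ref{C3-lem:Hi-intersection} plus sparsity, and (T.3) by contradiction: any violation would, via Lemma \ref{C3-lem:Hi_Y-int1} or \ref{C3-lem:Hi_Y-int0}, produce a tight compatible family whose cover properly exceeds $\cov(\Hc)$, or force $E_{J'}(X_i)=\emptyset$ against $i_{J'}(X_i)=2|X_i|-3$. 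Working in $J'$ also gives the edge count $|F|=\sum_{H_i}|E_J(H_i)|+\sum_j|E_{J'}(X_j)|$ as a clean partition with no bookkeeping about double coverage. If you want to salvage your bottom-up version, you need to (i) add the $|X\cap S^*|\geq 2$ absorption step, (ii) check at each merge that the hypotheses $H_i\cap H_j=S^*$ and $|Y\cap S^*|\leq 1$ (or $Y\cap S^*=\emptyset$ with two single-point intersections) are actually met, and (iii) verify that coverage of $E'\sm E_T$ is preserved throughout; the paper's maximality-based construction is the cleaner way to get all of this for free.
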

\begin{proof}[Proof of Claim]
Let $J=(V,F)$ denote the subgraph of $G$ induced by  $F$. Consider the following two cases.

\paragraph{\boldmath Case 1: $J$ has no tight $S$-compatible family for all $S\subseteq T$ with $|S|\geq 2$.} Let $X_1,X_2,\ldots,X_k$ be the maximal $T$-tight sets in $J$
and put
$
\mathcal{L}_1=\{\emptyset,X_1,X_2,\ldots,X_k\}
$.
Then $\mathcal{L}_1$ is an augmented $T$-compatible family.
Since $X=\{x,y\}$ 
is a $T$-tight set  in $J$ for all edges $xy\in F$, $\mathcal{L}_1$ covers $F$. In addition, Lemma \ref{C3-lem:tight_set_int} and the maximality of the sets $X_1,X_2,\ldots,X_k$ imply that $\mathcal{L}_1$ is 1-thin. Since each $X_i$ is $T$-tight in $J$ this gives,
\[
|F|=\sum_{i=1}^{k}{|E_J(X_i)|}=\sum_{i=1}^{k}{(2|X_i|-3)}=\val_T(\mathcal{L}_1).
\]
 We claim that $\mathcal{L}_1$ covers every edge of $E'\sm E_T$. To see this consider
an edge $ab\in E'\sm(F\cup E_T)$. Since $F$ is a maximal subset of $E'$ in $\mathcal{I}_T$
we have $F+ab\notin \mathcal{I}_T$. Our assumption that there is no $S$-tight,
$S$-compatible family in $J$, now implies that there is a $T$-tight set $X$ in $J$
with $a,b\in X$. Hence $X\subseteq X_i$ for some $1\leq i\leq k$. This implies that
$\mathcal{L}_1$ covers $ab$. Hence $\mathcal{L}_1$ covers every edge of $E'\sm E_T$ and the claim holds in this case.

\paragraph{\boldmath Case 2: $J$ has an $S$-tight, $S$-compatible family $\mathcal H$ for some $S\subseteq T$ with $|S|\geq 2$.} We may assume by Lemma \ref{C3-lem:max_cover} that, for every $S'$-tight, $S'$-compatible family ${\mathcal H}'$ in $J$ with $S'\subseteq T$ and $|S'|\geq 2$, we have $S'\subseteq S$ and  $\cov({\Hc}')\subseteq \cov(\Hc)$. Let $X_1,X_2,\ldots,X_k$ be the maximal $S$-tight sets of $J'=(V,F\sm E_J(\Hc))$ and put
$
\mathcal{L}_2=\{\Hc,X_1,X_2,\ldots,X_k\}.
$
Then $\mathcal{L}_2$ is an augmented $S$-compatible family which covers $F$ and we have 
\begin{equation}\label{eq:Xi}
\mbox{$i_{J'}(X_i)=2|X_i|-3$ for all $1\leq i\leq k$.}
\end{equation}

We next show that $\mathcal{L}_2$ is 1-thin.
Lemma \ref{C3-lem:tight_set_int} and the maximality of the sets $X_1,X_2,\ldots,X_k$ imply that $|X_i\cap X_j|\leq 1$ for all $1\leq i<j\leq k$, so (T.1) holds.
Lemma \ref{C3-lem:Hi-intersection} and the fact that $\Hc$ is $S$-tight imply that $H_i\cap H_j = S$ for all distinct $H_i,H_j\in \Hc$ (otherwise we could construct an $S$-compatible family $\overline \Hc$ in $J$ with $\val_S(\overline\Hc)<\val_S(\Hc)=i_J(\Hc)\leq i_J(\overline\Hc)$ and this would contradict the hypothesis that $F\in \mathcal{I}_T$). Hence (T.2) holds. Choose a set $X_i$. We show that (T.3) holds for $X_i$. If $|X_i\cap S|\geq 2$  then $\Hc'=\{X_i\}$ would be an $(S\cap X_i)$-tight, $(S\cap X_i)$-compatible family and the maximality of $\Hc$ would give $E_J(X_i)\subseteq \cov(\Hc')\subseteq \cov(\Hc)$. This  
would imply that $E_{J'}(X_i)=\emptyset$ and contradict (\ref{eq:Xi}).  Hence 
$|X_i\cap S|\leq 1$.

Suppose 
$|X_i\cap H_j|\geq 2$ for some $H_j\in \Hc$.
Then Lemma \ref{C3-lem:Hi_Y-int1} gives us an $S$-tight, $S$-compatible family $\overline \Hc$ with $\cov (H)\cup \cov (X_i)\subseteq \cov (\overline \Hc)$. The maximality of $\cov(\Hc)$ now implies that $\cov (X_i)\subseteq \cov (\Hc)$ and contradicts  (\ref{eq:Xi}). Hence $|X_i\cap H_j|\leq 1$ for all $H_j\in\Hc$. 

If $|X_i\cap S|= 1$ then the facts that $|X_i\cap H_j|\leq 1$ and $S\subset H_j$ for all $H_j\in \Hc$ would imply that (T.3) holds for $X_i$. So we may assume that $X_i\cap S= \emptyset$. We can now use Lemma \ref{C3-lem:Hi_Y-int1} and a similar argument to the previous paragraph to deduce that  $|X_i\cap H_j|=1$ for at most one $H_j\in \Hc$, so (T.3) holds for $X_i$.
%
%
%
%
%
%
%
%
Hence  $\mathcal{L}_2$ is
$1$-thin and we have
\begin{align*}
|F|&=\sum_{H_i\in\Hc}{|E_J(H_i)|}+\sum_{j=1}^{k}{|E_J(X_j)|}\\
&=\sum_{H_i\in\Hc}{(2|H_i\setminus S|-1)}+2(|S|-1)+\sum_{j=1}^{k}{(2|X_j|-3)}=\val_S(\mathcal{L}_2).
\end{align*}

We complete the proof of the claim by showing that $\mathcal{L}_2$ is a cover of $E'\sm E_T$. Choose $ab=e\in E'\sm (F\cup E_T)$.
By the maximality of $F$ we have $F+e\notin \mathcal{I}_T$. Thus $J$ has either
 an $S'$-tight set $X$ with $a,b\in X$ or  an $S'$-tight
$S'$-compatible family $\Hco$ with $a,b\in Y_i\in \Hco$ for some
$S'\subseteq T$ with $|S'|\geq 2$.
In the latter case, the maximality of $\cov(\Hc)$ implies that $\cov(\Hco)\subseteq \cov(\Hc)$
and hence $e$ is covered by $\mathcal{L}_2$. 
Hence we may assume that $a,b\in X$ for some $X\subseteq V$ with $i_J(X)=2|X|-3$.
%
If
$|X\cap\bigcup_{H_j\in \Hc}{H_j}|\geq 2$, then we can use a similar argument to that used to show that $X_i$ satisfies (T.3) to deduce that $\cov (X)\subseteq \cov (\Hc)$
which implies that 
$\mathcal{L}_2$ covers $e$. Hence we may assume that that
$|X\cap\bigcup_{H_j\in \Hc}{H_j}|\leq 1$. Then $E(X)\subseteq E(J')$ and hence
$X\subseteq X_i$ for some $1\leq i\leq k$.
Hence $e$ is covered by $\mathcal{K}_2$.
\end{proof}

We saw in (\ref{eq:thin}) that,  if $G$ is $S$-sparse and $\mathcal{L}$ is a $1$-thin, augmented $S$-compatible family
in $G$, then $i_G(\mathcal{L})\leq \val_S(\mathcal{L})$. Together with Claim \ref{clm:rankF}, this implies that $|F|=\min\{\val_S(\mathcal{L})\}$ where the minimum is taken over all $S\subseteq T$ with $|S|\geq 2$ and all $1$-thin, augmented $S$-compatible families  
$\mathcal L$ which cover 
$E'\sm E_T$. Since $\min\{\val_S(\mathcal{L})\}$ is independent of the choice of $F$, all maximal elements of $\{I\in{\mathcal I}_T:I\subseteq E'\}$ have the same cardinality. Hence (I.3) holds for $\mathcal{I}_T$ and $\mathcal M_T(G)$ is a matroid. The assertion that $r(E')=\min\{\val_S(\mathcal{L})\}$ follows immediately since $r(E')$ is equal to the cardinality of any maximal element of $\{I\in{\mathcal I}_T:I\subseteq E'\}$.
\eproof

The special cases of Theorem \ref{C3-thm:count_matroid} when $|T|=1,2$ are  given in \cite{LY} and \cite{FJK}, respectively.

\section{Coincident rigidity}\label{sec:main}
Throughout this section we will only be concerned with 2-dimensional frameworks so will suppress reference to the ambient space $\R^2$.
Let $G=(V,E)$ be a graph, $T\subseteq V$ with $|T|=3$. If $G$ has an independent, $T$-coincident realisation $(G,p)$ then $(G,p)$ is an independent, $S$-coincident realisation for all $S\subseteq T$ with $|S|=2$. We can combine this observation with Lemma \ref{C3-lem:geom_imp_count} to deduce that every independent set in the $T$-coincident rigidity matroid $\mathcal{R}_T(G)$ is independent in the matroid $\mathcal{M}_{T}(G)$ given by Theorem \ref{C3-thm:count_matroid}. Hence, to show that 
$\mathcal{R}_T(G)=\mathcal{M}_T(G)$, it only remains to show that every independent set in $\mathcal{M}_T(G)$ is independent in $\mathcal{R}_T(G)$. We will do this by induction on $|V|$: we suppose that $E$ is independent in $\mathcal{M}_T(G)$ and perform a graph theoretic reduction operation to create a smaller graph $G'=(V',E')$ such that $E'$ is independent in $\mathcal{M}_T(G')$; we apply induction to deduce that 
$E'$ is independent in $\mathcal{R}_T(G')$ then use the fact that the inverse of the reduction operation preserves independence in  the $T$-coincident rigidity matroid to deduce that $E$ is independent in $\mathcal{R}_T(G)$. The last step in this argument uses the following extension operations and geometric lemmas.  

Our first two lemmas concern the so called 0- and 1-extension  operations. We refer the reader to \cite{Whi1} for their proofs.
The {\em $0$-extension operation} on a graph $G=(V,E)$ constructs a new graph by adding a new vertex $w$ and two new edges from $w$ to $V$.
The {\em $1$-extension operation} constructs  a new graph from $G$ by deleting an edge $uv$ and then adding a new vertex $w$ and three new edges $wu, wv, wx$ for some $x\in V\sm \{u,v\}$.

\begin{lemma}\label{lem:0ext} 
Suppose that $G$ is a graph   and that $G'$ is obtained from $G$ by a  $0$-extension operation which  adds a new vertex $w$ and new edges $wu,wv$. Suppose further that $(G',p)$ is a realisation of $G'$ and that $u,v,w$ are not colinear in $(G,p)$.
Then $(G',p)$ is independent if and only if $(G,p|_{V(G)})$ is independent.
\end{lemma}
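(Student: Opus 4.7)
The plan is to analyse the rigidity matrix $R(G',p)$ directly by exploiting the block structure that arises from the fact that the new vertex $w$ appears in only two edges of $G'$, namely $wu$ and $wv$. I take the condition ``$u,v,w$ not colinear in $(G,p)$'' to mean that the three points $p(u),p(v),p(w)$ are not collinear (since $w\notin V(G)$ this is slightly abusive, but the natural reading).

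First I would order the edges of $G'$ so that the edges of $G$ come first, followed by $wu$ and $wv$, and order the vertices of $G'$ so that $w$ is last (its two coordinate-columns are on the right). With this ordering $R(G',p)$ has the block form
\[
R(G',p) \;=\; \begin{pmatrix} R(G,p|_{V(G)}) & 0 \\ A & B \end{pmatrix},
\]
where the zero block on the top-right reflects the fact that no edge of $G$ is incident to $w$, and $B$ is the $2\times 2$ matrix whose two rows are $p(w)-p(u)$ and $p(w)-p(v)$. Suppose we have a linear relation
\[
\lambda_1 r_{wu} + \lambda_2 r_{wv} + \sum_{e\in E(G)} \mu_e r_e \;=\; 0
\]
among the rows of $R(G',p)$. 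Reading off the columns indexed by $w$ (which pick up only the two new rows) yields
\[
\lambda_1\bigl(p(w)-p(u)\bigr) + \lambda_2\bigl(p(w)-p(v)\bigr) \;=\; 0.
\]

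The key geometric observation is that $p(w)-p(u)$ and $p(w)-p(v)$ are linearly independent in $\mathbb{R}^2$ precisely when $p(u),p(v),p(w)$ are not collinear. Under the non-colinearity hypothesis we therefore force $\lambda_1=\lambda_2=0$, reducing the relation to $\sum_e \mu_e r_e = 0$ on the rows of $R(G,p|_{V(G)})$. This immediately gives both directions: if $(G,p|_{V(G)})$ is independent, then all $\mu_e=0$, so $(G',p)$ is independent; conversely, any nontrivial dependence among the rows of $R(G,p|_{V(G)})$ extends by setting $\lambda_1=\lambda_2=0$ to a nontrivial dependence in $R(G',p)$, contradicting independence of $(G',p)$.

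There is no real obstacle here -- the argument is a routine block-matrix rank computation combined with the observation that in $\mathbb{R}^2$ two vectors $p(w)-p(u)$ and $p(w)-p(v)$ are linearly dependent iff $u,v,w$ are collinear. The only point to take care over is the edge case $p(u)=p(v)$, which is subsumed under ``collinear'' in the standard convention and so is already excluded by the hypothesis.
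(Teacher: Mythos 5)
Your proof is correct. The paper does not prove this lemma itself---it refers the reader to Whiteley \cite{Whi1}---and your block-matrix argument is precisely the standard one: the two new rows contribute, in the columns of $w$, a $2\times 2$ block with rows $p(w)-p(u)$ and $p(w)-p(v)$, which has full rank exactly when $p(u),p(v),p(w)$ are not collinear, so any row dependence must vanish on the new rows and row independence of $R(G',p)$ reduces to that of $R(G,p|_{V(G)})$.
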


\begin{lemma}\label{lem:1ext} 
Suppose that $(G,p)$ is an independent framework   and that $G'$ is obtained from $G$ by a  $1$-extension operation which  adds a new vertex $w$. Suppose further that neighbours of $w$ in $G'$ are not colinear in $(G,p)$.
Then $(G',p')$ is independent for some $p'$ with $p'(x)=p(x)$ for all $x\in V(G)$.
\end{lemma}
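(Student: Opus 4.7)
The plan is to exhibit a specific position $p'(w)$ for which $(G',p')$ is independent. Let the $1$-extension delete the edge $uv$ and add $w$ with neighbours $u,v,x$, and let $\ell$ be the line through $p(u)$ and $p(v)$; by hypothesis $p(x)\notin\ell$. I would place $p'(w)=\alpha p(u)+(1-\alpha)p(v)$ on $\ell$ for some $\alpha\in\R\setminus\{0,1\}$, and set $p'(y)=p(y)$ for all $y\in V(G)$. The claim is then that $R(G',p')$ has linearly independent rows.

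Consider a linear combination of the rows of $R(G',p')$ that sums to zero, and let $\lambda_e$ denote the coefficient of the row indexed by $e$. The only rows with nonzero entries in the two columns corresponding to $w$ are those indexed by $wu,wv,wx$, so the $w$-columns yield
\[
\lambda_{wu}\bigl(p'(w)-p(u)\bigr)+\lambda_{wv}\bigl(p'(w)-p(v)\bigr)+\lambda_{wx}\bigl(p'(w)-p(x)\bigr)=0.
\]
The first two vectors are parallel to $\ell$, whereas $p'(w)-p(x)$ has a nonzero component perpendicular to $\ell$ since $p(x)\notin\ell$. Projecting onto the perpendicular direction forces $\lambda_{wx}=0$, and using $p'(w)-p(u)=(1-\alpha)(p(v)-p(u))$ and $p'(w)-p(v)=\alpha(p(u)-p(v))$, the surviving identity reduces to $\lambda_{wu}(1-\alpha)=\lambda_{wv}\alpha$; I denote this common value by $\mu$.

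Next I would restrict the dependence to the columns indexed by $V(G)$. For each $e\in E(G)\setminus\{uv\}$, the restricted row agrees with the row of $R(G,p)$ indexed by $e$. The rows indexed by $wu$ and $wv$ contribute $\lambda_{wu}(p(u)-p'(w))=\mu(p(u)-p(v))$ in the $u$-columns and $\lambda_{wv}(p(v)-p'(w))=\mu(p(v)-p(u))$ in the $v$-columns, and nothing elsewhere; together these coincide exactly with $\mu$ times the row of $R(G,p)$ indexed by the removed edge $uv$. Hence the restricted relation is a linear dependence in $R(G,p)$ with coefficient $\mu$ on the row $uv$ and coefficient $\lambda_e$ on each other row $e$. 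Independence of $(G,p)$ forces all these coefficients to vanish; combined with $\alpha\notin\{0,1\}$, the equation $\mu=0$ upgrades to $\lambda_{wu}=\lambda_{wv}=0$, and with $\lambda_{wx}=0$ I conclude that $R(G',p')$ has independent rows.

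The main obstacle is the bookkeeping in the third paragraph, where one must verify that the restrictions of the rows indexed by $wu$ and $wv$ to the $V(G)$-columns really do assemble into a scalar multiple of the missing row for $uv$ in $R(G,p)$; this identity is exactly what motivates placing $p'(w)$ on $\ell$, and the proviso $\alpha\neq 0,1$ is what converts the single equation $\mu=0$ back into the two separate constraints $\lambda_{wu}=\lambda_{wv}=0$.
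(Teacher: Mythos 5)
Your proof is correct, and it is essentially the standard argument for the $1$-extension (edge-split) lemma that the paper itself does not reproduce but instead cites from Whiteley's survey: place $p'(w)$ on the line through $p(u)$ and $p(v)$ at a point distinct from both, kill $\lambda_{wx}$ by projecting the $w$-column relation orthogonally to that line, and observe that the restricted contributions of the rows $wu$ and $wv$ reassemble into a multiple of the deleted row $uv$ of $R(G,p)$. The bookkeeping in your third paragraph checks out (note that non-collinearity of $u,v,x$ already forces $p(u)\neq p(v)$, which is needed for the line $\ell$ and for the step $\lambda_{wu}(1-\alpha)=\lambda_{wv}\alpha$), so no gaps remain.
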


Our third extension result is a geometric version of a generic vertex splitting lemma of Whiteley which is stated without proof  
in \cite{Whi1}. 
Given a graph $G=(V,E)$ and $v\in V$ with neighbour set  $N_G(v)$, the {\em vertex splitting operation}  chooses  pairwise disjoint sets  $U_1,U_2,U_3$ with $U_1\cup U_2\cup U_3=N_G(v)$ and $|U_2|=2$, deletes all edges from $v$ to $U_3$, and then adds a new vertex $v'$ and $|U_3|+2$ new edges from $v'$ to each vertex in $U_2\cup U_3$. 

\begin{lemma}\label{lem:split} 
    Suppose that $(G,p)$ is an independent framework   and that $G'$ is obtained from $G$ by a vertex splitting operation which splits a vertex $z\in V(G)$ into two vertices $z,z'$. Suppose further that the common neighbours $z_1,z_2$ of $z$ and $z'$ in $G'$ are not colinear with $z$ in $(G,p)$.  
Put $p'(z)=p'(z')=p(z)$ and $p'(x)=p(x)$ for all $x\in V(G)-z$. 
Then $(G',p')$ is independent.
\end{lemma}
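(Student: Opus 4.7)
The plan is to argue directly from the rigidity matrix $R(G',p')$ by showing that any row dependency of $R(G',p')$ must be trivial. Suppose $\omega:E(G')\to \R$ satisfies $\sum_{e\in E(G')}\omega_e r_e=0$, where $r_e$ denotes the row of $R(G',p')$ indexed by $e$. I would introduce an induced weighting $\omega'$ on $E(G)$ by setting $\omega'_e=\omega_e$ for edges $e$ not incident to $z$, $\omega'_{uz}=\omega_{uz}$ for $u\in U_1$, $\omega'_{uz}=\omega_{uz}+\omega_{uz'}$ for $u\in U_2=\{z_1,z_2\}$, and $\omega'_{uz}=\omega_{uz'}$ for $u\in U_3$.

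Because $p'(v)=p(v)$ for all $v\in V(G)\setminus\{z\}$ and $p'(z)=p'(z')=p(z)$, a direct bookkeeping check shows that for each $v\neq z$ the column-$v$ equation of $\omega'$ in $R(G,p)$ equals the column-$v$ equation of $\omega$ in $R(G',p')$, and the column-$z$ equation of $\omega'$ in $R(G,p)$ equals the sum of the column-$z$ and column-$z'$ equations of $\omega$ in $R(G',p')$. All of these vanish by hypothesis, so $\omega'$ is a row dependency of $R(G,p)$. By independence of $(G,p)$, we conclude $\omega'\equiv 0$. This forces $\omega_e=0$ for every edge of $G'$ not incident to $\{z,z'\}$, $\omega_{uz}=0$ for $u\in U_1$, $\omega_{uz'}=0$ for $u\in U_3$, and $\omega_{z_i z}=-\omega_{z_i z'}$ for $i=1,2$.

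It remains to kill the four coefficients at the common neighbours $z_1,z_2$. For this I would read off the column-$z$ equation of $\omega$ in $R(G',p')$ on its own, which after the vanishing established above collapses to
\[
\omega_{z_1 z}\bigl(p(z)-p(z_1)\bigr)+\omega_{z_2 z}\bigl(p(z)-p(z_2)\bigr)=0.
\]
By the noncollinearity hypothesis on $z,z_1,z_2$ in $(G,p)$, the two vectors $p(z)-p(z_1)$ and $p(z)-p(z_2)$ are linearly independent in $\R^2$, so $\omega_{z_1 z}=\omega_{z_2 z}=0$ and hence also $\omega_{z_1 z'}=\omega_{z_2 z'}=0$. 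Thus $\omega\equiv 0$ and $(G',p')$ is independent.

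The argument is essentially a careful translation between row dependencies of $R(G',p')$ and $R(G,p)$, with the geometric hypothesis doing exactly the last bit of work. I do not expect a serious obstacle; the main thing to get right is the definition of $\omega'$ at the split vertex (where weights on parallel edges $uz,uz'$ for $u\in U_2$ are amalgamated) so that summing the two column equations at $z$ and $z'$ in $R(G',p')$ reproduces the column-$z$ equation at $p(z)$ in $R(G,p)$.
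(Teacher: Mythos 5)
Your proof is correct and is essentially the paper's argument in dual form: the paper performs row and column operations on $R(G',p')$ to expose a block matrix containing a copy of $R(G,p)$ plus two extra rows $\bigl(p(z)-p(z_i),\,0\bigr)$, and your amalgamation $\omega'_{uz}=\omega_{uz}+\omega_{uz'}$ of a putative row dependency is precisely the transpose of those operations. Both arguments then finish identically, using the noncollinearity of $p(z),p(z_1),p(z_2)$ to eliminate the last two coefficients, so there is no gap and no genuinely different idea.
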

\bproof 
Let $N_{G'}(z)=\{z_1,z_2,\ldots, z_k\}$ and $N_{G'}(z')=\{z_1,z_2,z_{k+1},z_{k+2},\ldots, z_m\}$.
Then the rigidity matrix 
$R(G',p')$ has the following form.
\renewcommand{\kbldelim}{[}
\renewcommand{\kbrdelim}{]}
\[
  \begin{blockarray}{ccc&cccc}
             &z            & z'           & z_1         & z_2         &        &  \\
	\begin{block}{c[cccccc]}
    zz_1     & p(z)-p(z_1) &  (0,0)      & p(z_1)-p(z) & (0,0)       &  \cdots&  \\
    zz_2     & p(z)-p(z_2) & (0,0)       &  (0,0)      & p(z_2)-p(z) &  \cdots&  \\
	z'z_1     &    (0,0)    & p(z)-p(z_1) & p(z_1)-p(z) & (0,0)       &  \cdots&  \\
	z'z_2     & (0,0)       & p(z)-p(z_2) & (0,0)       & p(z_2)-p(z) &  \cdots&  \\
	zz_3     & p(z)-p(z_3) & (0,0)       & (0,0      ) &             &  \cdots&  \\
    \vdots   & \vdots      & \vdots      & \vdots      & \vdots      &  \vdots&  \\
    zz_k     & p(z)-p(z_k) & (0,0)       & (0,0)       &  (0,0)      & \cdots &  \\
	z'z_{k+1} & (0,0) & p(z)-p(z_k)       & (0,0)       &  (0,0)      & \cdots &  \\
    \vdots   & \vdots      & \vdots      & \vdots      &   \vdots    & \vdots &  \\
	z'z_m     & (0,0) & p(z)-p(z_m)       & (0,0)       &   (0,0)     & \cdots &\\\cline{4-6}
	         &  0          &     \BAmulticolumn{1}{c|}{0}       &             &R(G-z,p)     &        &  \\
	\end{block}
	\end{blockarray}
\]
Let $M$ be the matrix obtained from $R(G',p')$ as follows: subtract row 3 from row 1 and row 4 from row 2, then add column 1 to column 2. Then {\footnotesize $M=\left(\begin{array}{cc}
p(z)-p(z_1) &  0\\
p(z)-p(z_2) &  0\\
* & R(G,p)
\end{array} \right)
$}. The hypotheses that $(G,p)$ is independent and $p(z),p(z_1),p(z_2)$ are not colinear now implies that $M$ has independent rows. Hence $(G',p')$ is independent.
%
%
%
%
%
\eproof

Our fourth extension lemma gives sufficient conditions for the operation of  replacing a rigid subgraph of a graph by  a larger rigid subgraph to preserve rigidity.

\begin{lemma}\label{lem:replace} Let $G=(V,E)$ be a graph, $Y\subseteq V$ such that $G[Y]$ is rigid and $\{Y_1,Y_2,\ldots,Y_m\}$ be a partition of $Y$ with $m\geq 3$. Let $G'=(V',E')$ be obtained from $G$ by contracting each set $Y_i$ to a single vertex $y_i$ for all $1\leq i\leq m$ and then adding an edge $y_iy_j$ for all non-adjacent pairs $y_i,y_j$, $1\leq i<j\leq m$. Put $Y'=\{y_1,y_2,\ldots,y_m\}$. Suppose that 
$(G',p')$ is an infintesimally rigid realisation of $G'$. Then $(G,p)$ is infinitesimally  rigid for some $p:V\to \R^2$ with
$p|_{V\sm Y}=p'|_{V'\sm Y'}$.  
\end{lemma}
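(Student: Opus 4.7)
The plan is to construct $p$ by choosing an infinitesimally rigid realisation of $G[Y]$ that pins three representative vertices to the three corresponding points of $p'(Y')$, and to deduce infinitesimal rigidity of $(G,p)$ via a motion-transfer argument using the rigidity of $(G',p')$.

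Because the clique $K_m$ on $Y'$ (with $m\geq 3$) is a subgraph of $G'$ and $(G',p')$ is infinitesimally rigid, the points $p'(y_1),\ldots,p'(y_m)$ cannot all be collinear; after relabelling, $p'(y_1),p'(y_2),p'(y_3)$ are affinely independent. Choose a representative $r_i\in Y_i$ for each $i$. Using the rigidity of $G[Y]$, I would construct an infinitesimally rigid realisation $p|_Y:Y\to\R^2$ with $p(r_i)=p'(y_i)$ for $i=1,2,3$ and the remaining vertices of $Y$ placed generically; such a $p|_Y$ exists because pinning three non-collinear vertices of a rigid graph preserves generic rigidity. Extend by $p|_{V\setminus Y}=p'|_{V\setminus Y}$.

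To show $(G,p)$ is infinitesimally rigid, let $\dot p$ be any infinitesimal motion. The rigidity of $(G[Y],p|_Y)$ forces $\dot p|_Y=Ap|_Y+b$ for some skew-symmetric $A$ and $b\in\R^2$. Setting $\eta(v)=\dot p(v)-Ap(v)-b$ for $v\in V\setminus Y$, the skew-symmetry of $A$ shows that the constraint at each cross-edge $uv\in E(G)$ with $u\in Y$ reduces to $\langle\eta(v),p(u)-p(v)\rangle=0$ and each constraint at an edge inside $V\setminus Y$ reduces to $\langle\eta(u)-\eta(v),p(u)-p(v)\rangle=0$. I would then define a candidate motion $\dot p':V'\to\R^2$ by $\dot p'(v)=\dot p(v)$ on $V\setminus Y$ and $\dot p'(y_i)=Ap'(y_i)+b$ on $Y'$, and note that the edges of $G'$ within $V\setminus Y$ and the edges of $K_m$ on $Y'$ are automatically preserved (the former because $p$ and $p'$ agree on $V\setminus Y$, the latter because $\dot p'|_{Y'}$ is the restriction of a rigid motion of $\R^2$). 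The remaining cross-edge constraints of $G'$ translate to $\langle\eta(v),p'(y_i)-p'(v)\rangle=0$ for each $y_iv\in E(G')$.

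The main obstacle is to deduce these last constraints from the $(G,p)$-constraints, since for a cross-edge $uv\in E(G)$ with $u\in Y_i$ and $u\neq r_i$ the direction $p(u)-p(v)$ generally differs from $p'(y_i)-p'(v)$. I would address this by a case analysis on $v\in V\setminus Y$: if $v$ has two or more $G$-neighbours in $Y$ whose positions yield non-collinear directions from $p(v)$, then the $(G,p)$-constraints at $v$ span $\R^2$ and force $\eta(v)=0$ on their own, making any $G'$-constraint at $v$ vacuous; if $v$ has a unique $G$-neighbour $u_0\in Y_i$, then $v$ has only the single $G'$-cross-edge $y_iv$ as well, and I would exploit the remaining freedom in placing the non-pinned vertices of $p|_Y$ to arrange that $p(u_0)$ lies on the line through $p(v)$ and $p'(y_i)$, so that the single $(G,p)$-direction at $v$ coincides with the $(G',p')$-direction. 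Once $\dot p'$ is verified as an infinitesimal motion of $(G',p')$, its rigidity forces $\dot p'$ to be a trivial motion of $\R^2$, whence $\eta\equiv 0$, the pair $(A,b)$ gives the trivial motion of $\R^2$ on $Y$ as well, and $\dot p$ is globally trivial. Hence $(G,p)$ is infinitesimally rigid.
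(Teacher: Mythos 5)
There is a genuine gap, and it sits exactly at the point you flag as ``the main obstacle''. The whole content of this lemma is the transfer of the cross-edge constraints between $(G,p)$ and $(G',p')$, and your construction does not make the cross-edge directions match. Concretely: (i) for $i\geq 4$ you pin nothing, so there is no relation whatsoever between $p(Y_i)$ and $p'(y_i)$, and the constraints of the edges from $Y_i$ to $V\setminus Y$ cannot be translated into the $G'$-constraints at $y_i$ except in the lucky case where $\eta(v)=0$ is already forced; (ii) your repair for a vertex $v$ whose unique $Y$-neighbour is $u_0\in Y_i$ breaks down because $u_0$ may be the unique $Y$-neighbour of several vertices $v_1,\dots,v_k$ -- the lines through $p(v_j)$ and $p'(y_i)$ all pass through $p'(y_i)$ and generically meet only there, so you are forced to set $p(u_0)=p'(y_i)$, i.e.\ to make $p(u_0)$ coincide with $p(r_i)$. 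But then the realisation of $G[Y]$ has coincident points, and the premise that $G[Y]$ is (generically) rigid no longer yields infinitesimal rigidity of $(G[Y],p|_Y)$ -- that is precisely the coincident-rigidity question this paper is about, and it can fail. You also leave uncovered the case of a vertex with two or more $Y$-neighbours whose directions happen to be collinear. So the case analysis cannot be completed as stated.

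The paper's proof embraces the coincidence rather than avoiding it: it first replaces $G$ by $G^*=G\cup K_Y$ (legitimate because $G[Y]$ is rigid, so adding the clique edges does not change the generic rank, and one can perturb $p|_Y$ to a generic position afterwards), then places \emph{every} vertex of $Y_i$ at the single point $p'(y_i)$. The complete graph $K_Y$ realised on $m\geq 3$ affinely spanning points \emph{is} infinitesimally rigid even with the coincidences, so any infinitesimal motion of $(G^*,p)$ is trivial on $Y$; after subtracting that trivial motion, every cross-edge $uv$ with $u\in Y_i$ has direction exactly $p'(y_i)-p'(v)$, so the residual motion literally is an infinitesimal motion of $(G',p')$ fixing $Y'$, and rigidity of $(G',p')$ kills it. If you want to salvage your approach, the lesson is that you must work with the clique on $Y$ (not $G[Y]$ itself) so that the coincident placement is available; pinning only three representatives and keeping $(G[Y],p|_Y)$ infinitesimally rigid is not compatible with matching all the cross-edge directions.
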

\begin{proof}
Let $G^*$ be obtained from $G$ by adding all edges between the vertices of $Y$. Since $G[Y]$ is rigid, it will suffice to show that $(G^*,p)$ is infinitesimally  rigid for some $p:V\to \R^2$ with
$p|_{V\sm Y}=p'|_{V'\sm Y'}$.  
Let $p(v)=p'(v)$ for each $v\in V\sm Y$ and $p(v)=p'(y_i)$ for each $v\in Y_i$, $1\leq i\leq m$. Then $(G^*[Y],p|_Y)$ is infinitesimally rigid. In addition, each infinitesimal motion of $(G^*,p)$ which fixes $Y$ induces an infinitesimal motion of $(G',p')$ which fixes $Y'$. Since $(G',p')$ is infintesimally rigid, every such motion fixes $V'\sm Y'$. Hence $(G^*,p)$ is infinitesimally  rigid.
\end{proof}

Our final lemma complements Lemmas \ref{lem:0ext} and \ref{lem:1ext} by showing that the inverse of the 0- and 1-extension operations preserves the property of being strongly $T$-sparse.

\begin{lemma}\label{C3-lem:1-red}
Let $G=(V,E)$ be a graph and $T\subseteq V$ with $1\leq |T|\leq 3$. 
Suppose that $G$ is strongly $T$-sparse
and $z\in V\setminus T$ has at most one neighbour in $T$.\\
(a) If $d(z)=2$ then $G-z$ is strongly $T$-sparse.\\
(b)  If $d(z)=3$ then $G-z+xy$ is strongly $T$-sparse for some non-adjacent $x,y\in N_G(z)$.
\end{lemma}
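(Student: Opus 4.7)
My plan splits into the easy part (a) and the substantive part (b).

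For (a), since $G-z$ is a subgraph of $G$, the set condition $i_{G-z}(X)\leq i_G(X)\leq \val_S(X)$ holds for every $\emptyset\neq S\subseteq T$ and every $X\subseteq V\setminus\{z\}$ with $|X|\geq 2$. For the family condition, any $S$-compatible family in $V\setminus\{z\}$ is automatically $S$-compatible in $V$, so $i_{G-z}(\mathcal{H})\leq i_G(\mathcal{H})\leq \val_S(\mathcal{H})$. Hence $G-z$ is strongly $T$-sparse, and this argument is essentially identical for parts (a) and (b) regarding ``$G-z$ is strongly $T$-sparse''.

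For (b), let $N(z)=\{a,b,c\}$. The $T$-sparsity of $G$ applied to $\{a,b,c,z\}$ gives $i_G(\{a,b,c,z\})\leq 5$, so at most two of $ab,ac,bc$ lie in $E(G)$ and at least one non-edge pair is available. I argue by contradiction: suppose that for every non-edge $xy\in\{ab,ac,bc\}$ of $G$, the graph $G-z+xy$ fails to be strongly $T$-sparse. Since $G-z$ is strongly $T$-sparse, each failure produces a witness in $G-z$ --- either an $S_{xy}$-tight set $X_{xy}\ni x,y$ or an $S_{xy}$-tight $S_{xy}$-compatible family $\mathcal{H}_{xy}$ covering $xy$, for some $\emptyset\neq S_{xy}\subseteq T$. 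The hypothesis that $N(z)\cap T$ has size at most one rules out the possibility $X_{xy}\subseteq S_{xy}$ (which would force both $x,y\in T$), so $X_{xy}\not\subseteq S_{xy}$ and $i_{G-z}(X_{xy})=2|X_{xy}|-3$.

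The key normalisation is that the third neighbour $\ell\in\{a,b,c\}\setminus\{x,y\}$ cannot lie in $X_{xy}$ (respectively in the member $H_i\in\mathcal{H}_{xy}$ covering $xy$): otherwise $i_G(X_{xy}+z)=i_{G-z}(X_{xy})+3=2|X_{xy}|$ exceeds $\val_{S_{xy}}(X_{xy}+z)\leq 2|X_{xy}|-1$, contradicting $S_{xy}$-sparsity of $G$, and analogously for families with $H_i$ replaced by $H_i+z$. Having established this, I combine the normalised witnesses across the non-edge pairs: Lemma \ref{C3-lem:tight_set_int} unions tight sets whose intersection has size at least $2$; Lemmas \ref{C3-lem:Hi_Y-int1} and \ref{C3-lem:Hi_Y-int0} absorb a tight set into a tight compatible family; and Lemma \ref{C3-lem:max_cover} merges tight families indexed by different subsets of $T$. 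The goal is a single $S$-tight structure in $G-z$ whose support contains $\{a,b,c\}$. A delicate subcase arises when three tight sets $X_{ab},X_{ac},X_{bc}$ share only the corresponding common neighbour; here direct counting suffices because singletons contain no edges, so the edges of $X_{ab}\cup X_{ac}\cup X_{bc}$ are the disjoint union of the edges of the three sets, and the count $\sum(2|X_{xy}|-3) = 2|X_{ab}\cup X_{ac}\cup X_{bc}|-3$ forces the union to be $S$-tight. When only one or two non-edges are available, the existing edges of $G[\{a,b,c\}]$ supply the extra edges needed to close up the structure.

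Once an $S$-tight $Y$ (or family) containing $\{a,b,c\}$ is produced in $G-z$, the three edges $za,zb,zc$ give $i_G(Y+z)=i_{G-z}(Y)+3=\val_S(Y)+3=2|Y|$ while $\val_S(Y+z)\leq 2|Y|-1$, contradicting $S$-sparsity of $G$. The main obstacle is the combination step: witnesses can mix tight sets with tight compatible families and may be indexed by different subsets of $T$, so several cases must be handled, each requiring verification of the $1$-thinness conditions needed to apply the combination lemmas of Section \ref{sec:matroidrank}.
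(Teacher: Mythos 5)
Your part (a) and the overall skeleton of part (b) (witnesses for each failed augmentation, the normalisation excluding the third neighbour, and the final count showing that a tight structure containing all of $N_G(z)$ contradicts the $S$-sparsity of $G$ once $z$ is added) are all sound. The genuine gap is the combination step, which you correctly identify as the crux but do not carry out, and which the lemmas of Section \ref{sec:matroidrank} do not in fact cover. Concretely, suppose $N_G(z)=\{a,b,c\}$ with $ab\in E$, and the witnesses are an $S$-tight $S$-compatible family $\Hc$ with $a,c\in H_1\in\Hc$ and a tight set $X\ni b,c$ satisfying $X\cap\bigcup_{H\in\Hc}H=\{c\}$ and $a\notin X$, $b\notin\bigcup_{H\in\Hc}H$. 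Then Lemma \ref{C3-lem:tight_set_int} does not apply (intersections have size one), Lemma \ref{C3-lem:Hi_Y-int1} does not apply ($|X\cap H_i|\le 1$ for every $i$), Lemma \ref{C3-lem:Hi_Y-int0} does not apply ($X$ meets only one member of $\Hc$), and the extra tight set $\{a,b\}$ coming from the edge $ab$ meets each of the two structures in a single vertex, so it cannot be absorbed either. This configuration is in fact impossible, but ruling it out needs a fresh count --- e.g.\ the family $\Hc'$ obtained from $\Hc$ by replacing $H_1$ with $H_1\cup X\cup\{z,b\}$ has $\val_S(\Hc')=\val_S(\Hc)+2|X|$ while it covers at least $\val_S(\Hc)+(2|X|-3)+1+3$ edges of $G$ --- and this merge is not an instance of any of the cited lemmas. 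Several further mixed configurations (set witnesses meeting $S$ in two vertices, witnesses indexed by different $2$-subsets of $T$, only one available non-edge) each need their own such argument, so as written the proof is incomplete precisely where the difficulty lies.

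The paper takes a completely different and much shorter route, which is available to you at this point: Theorem \ref{C3-thm:count_matroid} has already established that the strongly $T$-sparse edge sets form a matroid $\mathcal M_T$. Setting $F=\{ab,ac,bc\}$, $G_1=G-z+F$ and $G_2=G+F$, the failure of (b) would force $r(G_1)=r(G-z)=r(G)-3$ in $\mathcal M_T(G_2)$; since $F$ is independent and $G_2[N_G(z)\cup\{z\}]\cong K_4$ is a circuit of $\mathcal M_T(G_2)$ (this is where $z\notin T$ and $|N_G(z)\cap T|\le 1$ enter), a base of $G_1$ containing $F$ extends to a base of $G_2$ by at most two edges, giving $r(G)\le r(G_2)\le r(G)-1$. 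All of the case analysis you are attempting by hand is absorbed into the exchange axiom. If you want a self-contained combinatorial proof you must supply the missing merging counts; otherwise, invoke the matroid.
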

\bproof Statement (a) follows immediately from the fact if $H$ is a subgraph of a strongly $T$-sparse graph and $T\subseteq V(H)$ then $H$ is strongly $T$-sparse. 

To verify (b) we  let $F=\{ab:a,b\in N(z)\}$, $G_1=G-z+F$ and $G_2=G+F$.
Let $r(H)$ denote the rank  of the edge set of an arbitrary subgraph $H\subseteq G_2$ in the matroid $\mathcal{M}_{T}(G_2)$ given by Theorem \ref{C3-thm:count_matroid}.
Suppose, for a contradiction that (b)  is false. Then $r(G_1)=r(G-z)$. Since $G$ is strongly $T$-sparse, $E$ is independent in  $\mathcal{M}_{T}(G_2)$ and hence  $r(G_1)=r(G-z)= r(G)-3$.
Choose a base $B_1$ of $\mathcal{M}_{T}(G_1)$ that contains  $F$ and extend it to a base $B_2$ of $\mathcal{M}_{T}(G_2)$.
Since the edges of $G_2[N(G(z)\cup \{z\}]\cong K_4$ is a circuit of $\mathcal{M}_{T}(G_2)$, 
we have $r(G_2)=|B_2|\leq |B_1|+2= r(G_1)+2$.
Hence $r(G)\leq r(G_2)\leq r(G)-1$, a contradiction.
\eproof

\begin{theorem}\label{C3-thm:count_imp_geom}
Let $G=(V,E)$ be a graph and $T\subseteq  V$ with $1\leq |T|\leq 3$. Then $E$ is independent in $\mathcal{R}_{T}(G)$ if and only if $G$ is strongly $T$-sparse.
\end{theorem}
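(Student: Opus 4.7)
The forward direction is essentially a free consequence of the setup: if $E$ is independent in $\mathcal{R}_T(G)$, then any generic $T$-coincident realisation is also a generic $S$-coincident realisation for every $S\subseteq T$ with $|S|\geq 2$, so Lemma \ref{C3-lem:geom_imp_count} applied to each such $S$ yields $S$-sparsity, i.e.\ strong $T$-sparsity. (The case $|S|=1$ is implied by $T$-sparsity itself.)

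For the converse, my plan is to induct on $|V|$. The base case $|V|=|T|$ is trivial: strong $T$-sparsity forces $i_G(S)=0$ for every $S\subseteq T$ with $|S|\geq 2$, so $E=\emptyset$. For the inductive step, the main case is to locate a vertex $z\in V\sm T$ with $d(z)\in\{2,3\}$ and $|N_G(z)\cap T|\leq 1$. When such a $z$ exists, Lemma \ref{C3-lem:1-red} produces a strongly $T$-sparse graph $G'$ on one fewer vertex. Induction gives an independent generic $T$-coincident realisation $(G',p')$, and Lemma \ref{lem:0ext} (if $d(z)=2$) or Lemma \ref{lem:1ext} (if $d(z)=3$) extends $p'$ to an independent $T$-coincident realisation of $G$; the non-collinearity hypotheses in those lemmas hold because a generic $T$-coincident realisation places the vertices of $V\sm T$ in general position relative to the common point of $T$.

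The chief obstacle is the remaining case, in which every $z\in V\sm T$ either has degree at least $4$ or has at least two neighbours in $T$. A handshake argument starting from $|E|\leq 2|V|-3$ (a consequence of $T$-sparsity together with $T\not\subseteq V$) pushes the low-degree vertices of $G$ into $T$. To reduce in this situation I would apply the vertex splitting Lemma \ref{lem:split} to a suitable vertex $u\in T$, unsplitting $u$ into a pair $u,u'$ so that the resulting graph is smaller (or has a newly reducible non-$T$ vertex) and remains strongly $T$-sparse; when $G$ contains a large rigid subgraph whose contraction would keep strong $T$-sparsity, Lemma \ref{lem:replace} provides an alternative reduction by first solving the contracted problem and then re-expanding.

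The technical heart of the argument is verifying that some such split or contraction is always available and preserves strong $T$-sparsity. This is where the $S$-compatible family machinery developed in the previous subsection enters: Lemmas \ref{C3-lem:Hi-intersection}, \ref{C3-lem:Hi_Y-int1}, \ref{C3-lem:Hi_Y-int0}, \ref{C3-lem:tight_set_int} and \ref{C3-lem:max_cover} would be used to control how tight sets and tight $S$-compatible families interact, and thereby to rule out the tight configurations that could otherwise obstruct the reduction. I expect this case analysis --- pinpointing the correct vertex of $T$ at which to split, and checking preservation of strong $T$-sparsity in every obstructing configuration --- to be the hard part; the extension steps themselves are a direct application of the geometric lemmas already proved.
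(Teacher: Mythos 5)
Your forward direction and the first reduction step (finding $z\in V\sm T$ of degree $2$ or $3$ with at most one neighbour in $T$, applying Lemma \ref{C3-lem:1-red} and then Lemma \ref{lem:0ext} or \ref{lem:1ext}) match the paper's argument, and your instinct to maximise $|E|$ implicitly via the handshake count and to fall back on vertex splitting and Lemma \ref{lem:replace} in the remaining case is the right general shape. However, the proposal stops exactly where the real work begins, and the sketch of that remaining case is both incomplete and partly misdirected, so there is a genuine gap.

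Concretely: once every vertex of degree at most three lies in $X=T\cup W$ (where $W$ is the set of vertices with at least two neighbours in $T$), the paper does \emph{not} split a vertex of $T$ into a pair $u,u'$ as you propose; that operation makes the graph larger and does not obviously preserve strong $T$-sparsity. Instead it first pins down the structure of $G[X]$ by a counting argument with the $T$-compatible family $\Hc_0=\{T\cup\{w\}:w\in W\}$, forcing $|T|=3$, $3\leq|W|\leq 4$, and $G[X]$ to contain one of four explicit bipartite-type configurations. The dichotomy is then: either $G[X]$ contains a $4$-cycle through two vertices $v,w$ of $T$, in which case one \emph{contracts} $v$ and $w$ to pass to a $\{u,z_{vw}\}$-coincident problem on a smaller graph and lifts the realisation back with Lemma \ref{lem:split} --- and the delicate point, which your plan does not address, is that $G/vw$ may fail to be strongly $T'$-sparse, so one must convert any violating set or family of $G/vw$ back into a violating $T$-compatible family of $G$ (this is where the $4$-cycle is essential); or else $G[X]$ is a $6$-cycle, and then no splitting at $T$ is available at all. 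In that terminal case the paper shows $G[Y]$ is rigid for $Y=V\sm X$, uses Lemma \ref{lem:replace} to contract $Y$ to three vertices, and finishes by a \emph{direct rank computation} on the seven resulting base graphs --- a finite verification, not a further induction. Your proposal anticipates neither the need for this explicit base-case check nor the mechanism for transferring sparsity violations across the contraction, and the lemmas you cite (\ref{C3-lem:Hi-intersection}--\ref{C3-lem:max_cover}) are the tools for Theorem \ref{C3-thm:count_matroid} rather than for this step. As written, the converse direction is therefore a plan rather than a proof.
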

\bproof 
 If $E$ is independent in $\mathcal{R}_{T}(G)$ 
then $G$ is strongly $T$-sparse by Lemma \ref{C3-lem:geom_imp_count}. 
Hence we need only verify the reverse implication. 
 Suppose for a contradiction that this is false and that $(G,T)$ has been chosen to be  a counterexample such that: $|V|$ as small as possible; subject to this condition, $|E|$ as large as possible; subject to these two conditions, the number of vertices of degree at most three in $G$ is as large as possible. 
It is easy to check that $E$ is independent in $\mathcal{R}_{T}(G)$ when $|V|\leq |T|+1$ so we may assume that $|V|\geq |T|+2$. Let $K$ be a complete graph with vertex set $V$. If $E$ is not a base of $\mathcal{M}_T(K)$ then we could add an edge of $K$ to $E$ to obtain a counterexample with more edges  than $G$. Hence $E$ is a base of  
$\mathcal{M}_T(K)$. This implies that $|E|=2|V|-3$. (We have 
 $|E|\leq 2|V|-3$ since $G$ is strongly $T$-sparse. On the other hand, $|E|\geq 2|V|-3$ since independence in $\mathcal{R}_T(K)$ implies independence in $\mathcal{M}_T(K)$ and we can construct an independent set of size $2|V|-3$ in   $\mathcal{R}_T(K)$ by starting with an edge joining two vertices of $V\sm T$ and then recursively applying Lemma \ref{lem:0ext}.)
 

\begin{claim}\label{clm:mindeg3} $G$ has minimum degree three.
\end{claim}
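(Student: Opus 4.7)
The plan is a proof by contradiction. Suppose some vertex $z \in V$ has $d(z) \le 2$. If $d(z) \le 1$, a direct counting argument rules this out: take $X = V \setminus \{z\}$; since $|X| \ge |T|+1 > |T|$, we have $X \not\subseteq T$ and so $\val_T(X) = 2|V|-5$, whereas $i_G(X) \ge |E|-1 = 2|V|-4$, contradicting the $T$-sparsity of $G$. From now on I may assume $d(z)=2$.

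The main reduction handles $z \in V \setminus T$ with at most one neighbour in $T$: Lemma \ref{C3-lem:1-red}(a) makes $G-z$ strongly $T$-sparse, so by the minimality of $|V|$ the edge set $E(G-z)$ is independent in $\mathcal{R}_T(G-z)$. Taking a generic $T$-coincident realisation $(G-z,p')$, the two neighbours $u,v$ of $z$ are not both in $T$, so $p'(u)\neq p'(v)$, and placing $p(z)$ off the line through $p'(u),p'(v)$ makes Lemma \ref{lem:0ext} applicable, yielding that $(G,p)$ is independent, contradicting the choice of $G$ as a counterexample. The subcase $z \in T$ can be handled similarly: $\val_T(T)=0$ together with strong $T$-sparsity forces $E_G(T)=\emptyset$, so both neighbours of $z$ must lie in $V \setminus T$; applying induction on $|V|$ to the pair $(G-z,T\setminus\{z\})$ produces a generic $(T\setminus\{z\})$-coincident independent realisation of $G-z$, and extending by setting $p(z)$ equal to the common coincidence point of $T\setminus\{z\}$ keeps $p(z),p(u),p(v)$ non-collinear (the neighbours $u,v$ having generic coordinates distinct from $p(z)$), so Lemma \ref{lem:0ext} again furnishes independence of $(G,p)$.

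The principal obstacle is the remaining subcase $z \in V \setminus T$ whose two neighbours both lie in $T$: here $p(u) = p(v)$ in every $T$-coincident realisation, so Lemma \ref{lem:0ext} cannot be applied directly. To rule this out I plan to use the matroid structure given by Theorem \ref{C3-thm:count_matroid} together with the maximality of $|E|$ and of the number of vertices of degree at most three in the choice of $G$. Since $E$ is a base of $\mathcal{M}_T(K_V)$ while $zw$ (for the unique $w \in T\setminus\{u,v\}$) lies outside $E$, the fundamental circuit of $zw$ in $\mathcal{M}_T(K_V)$ is witnessed by a $1$-thin augmented $S$-compatible family covering $E \cup \{zw\}$, and the resulting edge-exchange should either produce a strongly $T$-sparse graph with strictly more vertices of degree at most three (contradicting the third minimality condition) or identify an edge whose removal from $E$ already decouples $\{zu,zv\}$ from the rest, giving a structural contradiction with $|E| = 2|V|-3$. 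This matroidal replacement step is the hardest part of the argument; the rest is a fairly routine induction combined with Lemmas \ref{C3-lem:1-red} and \ref{lem:0ext}.
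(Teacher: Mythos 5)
Your overall strategy matches the paper's: reduce via Lemma \ref{C3-lem:1-red}(a) and Lemma \ref{lem:0ext} when $z\notin T$ has at most one neighbour in $T$, and handle $z\in T$ by passing to $T'=T\setminus\{z\}$ (you should also note, as the paper does, the degenerate case $|T|=1$, where one takes $T'=\{z'\}$ for an arbitrary $z'\in V-z$). The $d(z)\le 1$ counting observation is fine.

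However, there is a genuine gap in the one subcase you yourself flag as the hardest: $z\in V\setminus T$ with $d(z)=2$ and both neighbours in $T$. Your proposed resolution via a fundamental-circuit exchange in $\mathcal{M}_T(K_V)$ is only a sketch of a hope ("should either produce \dots or identify \dots") with no argument that either alternative actually materialises, and as written it does not constitute a proof. The point you are missing is that this configuration is already killed by the sparsity count itself, with no matroid exchange and no geometry: take the $T$-compatible family $\Hc=\{T\cup\{z\},\,V\setminus\{z\}\}$ (both sets properly contain $T$ since $|V|\ge |T|+2$). Its value is
\[
\val_T(\Hc)=(2\cdot 1-1)+\bigl(2(|V|-1-|T|)-1\bigr)+2(|T|-1)=2|V|-4,
\]
while every edge of $G$ is induced by one of the two sets (the two edges at $z$ go into $T\cup\{z\}$, everything else lies in $V\setminus\{z\}$), so $i_G(\Hc)=|E|=2|V|-3>\val_T(\Hc)$, contradicting $T$-sparsity. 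This is exactly how the paper closes the case. So your proof is incomplete as it stands, but the missing step is a one-line counting argument rather than the matroidal machinery you were reaching for.
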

\begin{proof}[Proof of Claim]
Since 
$|E|= 2|V|-3$, $G$ has minimum degree at most three. Suppose, for a contradiction, that $G$ has a vertex $z$ with $d(z)\leq 2$. 

We first consider the case when $z\not\in T$. If $|N_G(z)\cap T|\leq 1$ then we can apply Lemma \ref{C3-lem:1-red}(a) to deduce that $G-z$ is strongly $T$-sparse, use the minimality of $V$ to deduce that every generic $T$-coincident framework $(G-z,p)$ is independent,
and then apply Lemma
\ref{lem:0ext} to  obtain an independent  $T$-coincident realsation of $G$.
This would imply that $E$ is independent in $\mathcal{R}_{T}(G)$ and contradict the choice of $G$. Hence $d(z)=2$ and $N_G(z)\subseteq T$. Let $\mathcal{H}=\{T\cup \{z\},V-z\}$. Then $\mathcal{H}$ is a $T$-compatible family with $\val_{T}(\Hc)=2|V|-4$. This contradicts the fact that $G$ is strongly $T$-sparse since $i_G(\Hc)=|E|=2|V|-3$.

Now suppose that $z\in T$. Then $N_G(z)\cap T=\emptyset$ since $G$ is strongly $T$-sparse. Let $T'=T-z$ if $|T|\geq 2$ and otherwise put $T'=\{z'\}$ where $z'$ is an arbitrary vertex in $V-z$. 
Then $G-z$ is strongly $T'$-sparse as it is a subgraph of $G$. The choice of $G$ now implies that  every generic $T'$-coincident framework $(G-z,p)$ is independent. We can now apply  Lemma \ref{lem:0ext} to obtain an independent  $T$-coincident realsation of $G$. This implies that $E$ is independent in $\mathcal{R}_{T}(G)$ and contradicts the choice of $G$.
\end{proof}

Let $W$ be the set of all vertices of $G$ having at least two neighbours in $T$
and put $X=T\cup W$.

\begin{claim}\label{clm:Xdeg3} Every  vertex of degree three in $G$ belongs to $X$.
\end{claim}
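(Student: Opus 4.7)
My plan is to argue by contradiction using the minimality of the counterexample $(G,T)$. Suppose some vertex $z$ has degree three and $z\notin X$; then $z\notin T$ and $|N_G(z)\cap T|\leq 1$. Write $N_G(z)=\{x,y,t\}$. The strategy is to perform an inverse 1-extension at $z$, apply the induction hypothesis to the resulting smaller graph, and then lift back to an independent $T$-coincident realisation of $G$ using Lemma \ref{lem:1ext}, contradicting the choice of $G$.

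The first step is to invoke Lemma \ref{C3-lem:1-red}(b), whose hypotheses are satisfied since $z\notin T$ and $|N_G(z)\cap T|\leq 1$. This supplies a pair of non-adjacent vertices in $N_G(z)$, which we may label $x,y$, such that $G':=G-z+xy$ is strongly $T$-sparse. As $|V(G')|<|V|$, the minimality of $(G,T)$ implies $E(G')$ is independent in $\mathcal{R}_T(G')$, so any generic $T$-coincident realisation $(G',p)$ is independent. Now $G$ is obtained from $G'$ by the 1-extension that deletes $xy$ and adds $z$ joined to $x,y,t$, so provided the hypotheses of Lemma \ref{lem:1ext} are met, $p$ extends to an independent realisation of $G$; the extension remains $T$-coincident since $z\notin T$, giving the required contradiction.

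The only non-routine step is verifying the non-colinearity hypothesis of Lemma \ref{lem:1ext}: that $p(x),p(y),p(t)$ are not colinear in $(G',p)$. This is precisely where the assumption $|N_G(z)\cap T|\leq 1$ is used. At most one of $x,y,t$ is forced to take the common $T$-coincident position; combined with the genericity of $(G',p)$, this means that the six coordinates of $p(x),p(y),p(t)$ consist of entries drawn from an algebraically independent set over $\rat$ and are therefore algebraically independent, which rules out any collinearity relation. Note that the argument would fail if $z$ had two neighbours in $T$, since then two of the three points would coincide in any $T$-coincident realisation; this is precisely the reason $X$ contains $W$, and it pinpoints the only obstacle of the proof.
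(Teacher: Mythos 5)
Your proposal is correct and follows essentially the same route as the paper: apply Lemma \ref{C3-lem:1-red}(b) to get a strongly $T$-sparse graph $G-z+xy$, invoke minimality to get independence in $\mathcal{R}_T(G-z+xy)$, and lift back via Lemma \ref{lem:1ext}. Your explicit verification of the non-collinearity hypothesis (at most one neighbour of $z$ lies in $T$, so the three neighbour positions are given by algebraically independent coordinates) is a detail the paper leaves implicit, and it is argued correctly.
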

\begin{proof}[Proof of Claim] 
Suppose for a contradiction that there exists a vertex $z\in V\setminus X$ of degree three in $G$. Then we can apply Lemma \ref{C3-lem:1-red}(b) to deduce that $G-z+xy$ is strongly $T$-sparse for some non-adjacent $x,y\in N_G(z)$, use the minimality of $V$ to deduce that every generic $T$-coincident framework $(G-z+xy,p)$ is independent,
and then apply Lemma
\ref{lem:1ext} to  obtain an independent  $T$-coincident realsation of $G$.
This implies that $E$ is independent in $\mathcal{R}_{T}(G)$ and contradicts the choice of $G$.
\end{proof}

Let
$\Hc_0=\{T\cup \{w\}:w\in W\}$. Then $\Hc_0$ is a $T$-compatible family and $|W|+2|T|-2=\val_{T}(\Hc_0)\geq i_G(\Hc_0)\geq 2|W|$ since $G$ is $T$-sparse. Since $|E|=2|V|-3$, Claims \ref{clm:mindeg3}  and \ref{clm:Xdeg3} imply that $|X|=|T|+|W|\geq 6$. These two inequalities, combined with the fact that $|T|\leq 3$, give 

\begin{equation}\label{eq:equal}
\mbox{$|T|=3$, $\; 3\leq |W|\leq 4\:$ and $\;2|W|\leq i_G(\Hc_0)\leq |W|+2|T|-2=|W|+4$.}
\end{equation}
Since each vertex in $W$ has at least two neighbours in $T$,
(\ref{eq:equal}) and the hypothesis that $G$ is strongly $T$-sparse imply that we can label the vertices in $T$ as $u,v,w$ in such a way that: 
\begin{equation}
\mbox{$G[X]$ contains one of the graphs shown in Figure \ref{C3-fig:possibilities} as a spanning subgraph. 
}
\label{eq:equalfig}
\end{equation}

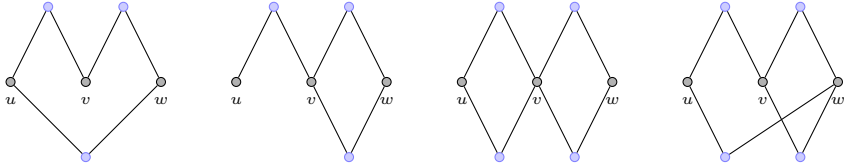
\begin{figure}[ht]
\begin{center}

\begin{tikzpicture}[scale=0.5,font=\tiny]

\node[nodeblack] at (-2cm,0cm) (u) [label=below:$u$] {};
\node[nodeblack] at (0cm,0cm) (v) [label=below:$v$] {};
\node[nodeblack] at (2cm,0cm) (w) [label=below:$w$] {};

\node[nodde] at (-1cm,2cm) (a2) [] {}
	edge[] (u)
	edge[] (v);
\node[nodde] at (1cm,2cm) (b2) [] {}
	edge[] (v)
	edge[] (w);
\node[nodde] at (0cm,-2cm) (c) [] {}
	edge[] (u)
	edge[] (w);
\begin{scope}[xshift=6cm]
\node[nodeblack] at (-2cm,0cm) (u) [label=below:$u$] {};
\node[nodeblack] at (0cm,0cm) (v) [label=below:$v$] {};
\node[nodeblack] at (2cm,0cm) (w) [label=below:$w$] {};

\node[nodde] at (-1cm,2cm) (a1) [] {}
	edge[] (u)
	edge[] (v);
\node[nodde] at (1cm,2cm) (a2) [] {}
	edge[] (v)
	edge[] (w);
\node[nodde] at (1cm,-2cm) (c) [] {}
	edge[] (v)
	edge[] (w);
\end{scope}
\begin{scope}[xshift=12cm]
\node[nodeblack] at (-2cm,0cm) (u) [label=below:$u$] {};
\node[nodeblack] at (0cm,0cm) (v) [label=below:$v$] {};
\node[nodeblack] at (2cm,0cm) (w) [label=below:$w$] {};

\node[nodde] at (-1cm,2cm) (a1) [] {}
	edge[] (u)
	edge[] (v);
\node[nodde] at (-1cm,-2cm) (e) [] {}
	edge[] (u)
	edge[] (v);
\node[nodde] at (1cm,2cm) (a2) [] {}
	edge[] (v)
	edge[] (w);
\node[nodde] at (1cm,-2cm) (c) [] {}
	edge[] (v)
	edge[] (w);
\end{scope}
\begin{scope}[xshift=18cm]
\node[nodeblack] at (-2cm,0cm) (u) [label=below:$u$] {};
\node[nodeblack] at (0cm,0cm) (v) [label=below:$v$] {};
\node[nodeblack] at (2cm,0cm) (w) [label=below:$w$] {};

\node[nodde] at (-1cm,2cm) (a1) [] {}
	edge[] (u)
	edge[] (v);
\node[nodde] at (-1cm,-2cm) (e) [] {}
	edge[] (u)
	edge[] (w);
\node[nodde] at (1cm,2cm) (a2) [] {}
	edge[] (v)
	edge[] (w);
\node[nodde] at (1cm,-2cm) (c) [] {}
	edge[] (v)
	edge[] (w);
\end{scope}
\end{tikzpicture}

\end{center}
\caption{Possible spanning subgraphs of $G[X]$ with $T=\{u,v,w\}$. 
}
\label{C3-fig:possibilities}
\end{figure}


We next use Lemma \ref{lem:split} to show that the first possibility in Figure \ref{C3-fig:possibilities} must occur.

\begin{claim}\label{clm:4cycle} $G[X]$ does not contain any of the three graphs on the right of Figure \ref{C3-fig:possibilities} as a spanning subgraph.
\end{claim}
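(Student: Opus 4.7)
The plan is to reach a contradiction by reducing $G$ to the smaller graph $G^-=G/vw$ and applying Lemma~\ref{lem:split} to lift an independent $T^-$-coincident realisation of $G^-$ (with $T^-=\{u,z\}$, where $z$ is the merged vertex) back to an independent $T$-coincident realisation of $G$. Suppose for a contradiction that $G[X]$ contains one of the three right-hand graphs of Figure~\ref{C3-fig:possibilities} as a spanning subgraph; each such configuration contains a 4-cycle $vawb$ with $a,b\in W$ and $N_G(a)\cap T = N_G(b)\cap T = \{v,w\}$. The first step is to show that $a,b$ are the only common neighbours of $v,w$ in $G$. Any such neighbour must lie in $X$, hence in $W$, so it suffices to exclude the possibility that a further $W$-vertex is adjacent to both $v$ and $w$. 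In configurations 3 and 4, where $|W|=4$, the equality $i_G(\mathcal{H}_0)=2|W|=|W|+4$ forced by (\ref{eq:equal}) and $T$-sparsity applied to $\mathcal{H}_0=\{T\cup\{x\}:x\in W\}$ makes every $W$-vertex have exactly two $T$-neighbours, ruling out any further common neighbour. In configuration 2 the only remaining possibility is $wa_1\in E$; but then the family $\mathcal{H}=\{\{v,w,a_1\},\{v,w,a_2\},\{v,w,c\}\}$ is $\{v,w\}$-compatible with $i_G(\mathcal{H})=6 > 5 = \val_{\{v,w\}}(\mathcal{H})$, contradicting $\{v,w\}$-sparsity of $G$.

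The main technical step is to prove that $G^-$ is $T^-$-sparse (equivalently, strongly $T^-$-sparse, since $|T^-|=2$). For each single set $X^-=X'\cup\{z\}\subseteq V(G^-)$ with $X'\neq\emptyset$ and $X^-\not\subseteq T^-$, I would apply $\{v,w\}$-sparsity of $G$ to the $\{v,w\}$-compatible family $\{X'\cup\{v,w\}\}\cup\{\{v,w,x\}:x\in\{a,b\}\setminus X'\}$ and combine with the identity $i_{G^-}(X^-)=i_G(X'\cup\{v,w\})-|\{a,b\}\cap X'|$ to deduce $i_{G^-}(X^-)\le 2|X^-|-3$. For a $T^-$-compatible family $\mathcal{H}^-=\{H^-_1,\ldots,H^-_k\}$ in $G^-$, I would lift it to the $T$-compatible family $\mathcal{H}=\{(H^-_i\setminus\{z\})\cup\{v,w\}:1\le i\le k\}$ in $G$, augment with the sets $\{T\cup\{x\}:x\in\{a,b\}\setminus C\}$, where $C:=\bigcup_i(H^-_i\setminus\{z\})$, and apply $T$-sparsity of $G$ to the augmented family. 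Combined with the identities $\val_T(\mathcal{H})=\val_{T^-}(\mathcal{H}^-)+2$ and $i_G(\mathcal{H})-i_{G^-}(\mathcal{H}^-)=|\{a,b\}\cap C|$, this yields $i_{G^-}(\mathcal{H}^-)\le\val_{T^-}(\mathcal{H}^-)$ in each of the cases $|\{a,b\}\cap C|\in\{0,1,2\}$.

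Once $G^-$ is $T^-$-sparse, the $|T^-|=2$ case of Theorem~\ref{C3-thm:count_imp_geom} (established in \cite{FJK}) provides an independent generic $T^-$-coincident realisation $(G^-,p^-)$ of $G^-$. Since $a,b$ are the only common neighbours of $v,w$ in $G$ and $p^-(a),p^-(b)$ are in generic position, the three points $p^-(z),p^-(a),p^-(b)$ are non-colinear, so Lemma~\ref{lem:split} produces an independent realisation $(G,p)$ of $G$ with $p(v)=p(w)=p^-(z)=p^-(u)=p(u)$, which is $T$-coincident. This contradicts the assumption that $E$ is dependent in $\mathcal{R}_T(G)$, completing the proof of the claim. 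The main obstacle is the $T^-$-sparsity verification for $G^-$, in particular the careful bookkeeping across the three cases $|\{a,b\}\cap C|\in\{0,1,2\}$ in the augmentation argument for $T^-$-compatible families.
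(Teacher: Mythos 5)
Your proposal is correct and follows the same overall strategy as the paper's proof: contract $v,w$ to a single vertex $z$, show that $G/vw$ is strongly $\{u,z\}$-sparse, obtain an independent generic $\{u,z\}$-coincident realisation (the paper invokes the minimality of the counterexample rather than citing \cite{FJK}, but either justification works), and lift it back to an independent $T$-coincident realisation of $G$ via Lemma \ref{lem:split}, contradicting the choice of $G$. The difference lies in how the sparsity of $G/vw$ is established. The paper argues by contradiction and splits into two cases: a violating single set $Z$ is excluded by a minimum-degree argument in $G'[Z]$ using the degree information from (\ref{eq:equal}), while a violating $T'$-compatible family is lifted to the $T$-compatible families $\Hc_1,\Hc_2$ in $G$. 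You instead verify sparsity directly and uniformly: every candidate set or family in $G/vw$ is lifted to a $\{v,w\}$- or $T$-compatible family in $G$, augmented by the sets $\{v,w,x\}$ or $T\cup\{x\}$ for those common neighbours $x\in\{a,b\}$ of $v,w$ not already covered; your family case is essentially the paper's $\Hc_1/\Hc_2$ construction, and your set case replaces the paper's degree argument by the same augmentation trick, which is arguably cleaner since the identities $i_{G^-}(\cdot)=i_G(\cdot)-|\{a,b\}\cap\cdot|$ handle all subcases at once. You also make explicit a point the paper leaves implicit: that $a,b$ are the \emph{only} common neighbours of $v$ and $w$, which is exactly what is needed for $G$ to be recoverable from $G/vw$ by a vertex split and hence for Lemma \ref{lem:split} to produce $G$ itself rather than a proper subgraph. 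One very minor imprecision: in the second configuration, when $i_G(\Hc_0)=7$ a 4-cycle vertex may also be adjacent to $u$, so the assertion $N_G(a)\cap T=N_G(b)\cap T=\{v,w\}$ is slightly too strong as stated; but your subsequent arguments only use $\{v,w\}\subseteq N_G(a)\cap T$ together with the two-common-neighbour property, so nothing breaks.
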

\begin{proof}[Proof of Claim]
Suppose, for a contradiction, that the claim is false.
Then 
there exists a 4-cycle $C$ in $G[X]$ with $v,w\in V(C)$ and $E(C)\subseteq E(T,W)$. 

Consider the graph $G'=G/vw$.
Let $z$ be the vertex of $G'$ coresponding to $\{v,w\}$ and put $T'=\{u,z\}$.
If $G'$ is strongly $T'$-sparse
then the choice of $G$ implies that 
every generic $T'$-coincident framework $(G',p)$ is independent.
We could now apply Lemma \ref{lem:split} at $z$ to obtain an independent $T$-coincident realisation of $G$ and contradict the choice of $G$.
Hence $G'$ is not strongly $T'$-sparse.
Since $|T'|=2$, $G'$ is not  $T'$-sparse and hence there exists either 
a set $Z\subseteq V(G')$  such that 
$i_{G'}(Z)> 2|Z|-3$ or a $T'$-compatible family $\Hc$ in $G'$ such that 
$i_{G'}(\Hc)> \val_{T'}(\Hc)$.
%

Suppose the first alternative holds. We may assume that $Z$ has been chosen to be as small as possible. The minimality of $Z$ implies that
each vertex in $Z$ has degree at least three in $G'[Z]$, and the fact that $G$ is strongly $T$-sparse tells us
that $z\in Z$. If $u\in Z$ then $\Hc=\{Z\}$ would be a $T'$-compatible family with $i_{G'}(\Hc)> \val_{T'}(\Hc)$. We will consider this possibility when we investigate the second   alternative that $G'$ has a $T'$-compatible family $\Hc$  with $i_{G'}(\Hc)> \val_{T'}(\Hc)$, so we now assume that  $u\not\in Z$. This and the fact that $G[X]$ has one of the three graphs to the right of Figure \ref{C3-fig:possibilities} as a spanning subgraph imply that every vertex of $W\cap Z$ will have lower degree in $G'[Z]$ than in $G$. Since $G'[Z]$ has minimum degree at least three and $W$ has at most one vertex with degree geater than three in $G$ by (\ref{eq:equal}), we have $|W\cap Z|\leq 1$. 
We can now obtain a contradiction by examining the three choices for the spanning subgraphs of $G[X]$ given in  Figure \ref{C3-fig:possibilities}. 
If $G[X]$ contains the second graph in the figure as a spanning subgraph, we would have $d_{G'}(z)= d_G(v)+d_G(w)-2=4$ and the fact that 
$|W\cap Z|\leq 1$ now gives $d_{G'[Z]}(z)\leq 2$. On the other hand, if $G[X]$ contains the third or fourth  graph in the figure as a spanning subgraph, we would have $d_{G'}(z)= d_G(v)+d_G(w)-2\leq 5$ and the fact that 
$|W\cap Z|\leq 1$ again gives $d_{G'[Z]}(z)\leq 2$.
Both possibilities contradict the fact that $G'[Z]$ has minimum degree at least three.


Hence there exists a $T'$-compatible family $\Hc$ in $G'$ such that 
$i_{G'}(\Hc)> \val_{T'}(\Hc)$. We may assume that $\Hc$ has been chosen such that $i_{G'}(\Hc)$ is minimal. 
Let $C=vawbv$. 
Since at most one vertex in $X$
has degree greater than three, we may also assume that  $b$ has degree three in $G$.
Then $d_{G'}(b)=2$ and $T'\not \subseteq N_{G'}(b)$. We may now use the minimality of $i_{G'}(\Hc)$ to deduce that $b\not\in H$ for all $H\in \Hc$.
Consider the two  $T$-compatible families in $G$ given by
\begin{align*}
\Hc_1:=&\{H-z+v+w: H\in \Hc \}\cup \{\{a,u,v,w\},\{b,u,v,w\}\}
\end{align*}
and
\begin{align*}
\Hc_2:=&\{H-z+v+w: H\in \Hc \}\cup \{\{b,u,v,w\}\}.
\end{align*}
If $a\not\in H$ for all $H\in \Hc$ we have 
\[
\val_{T}(\Hc_1)=\val_{T'}(\Hc)+4<i_{G'}(\Hc)+4=i_G(\Hc_1)
\]
and, if $a\in H$ for some $H\in \Hc$, then
\[
\val_{T}(\Hc_2)=\val_{T'}(\Hc)+3<i_{G'}(\Hc)+3=i_G(\Hc_2).
\]
Both alternatives contradict the  the fact that $G$ is $T$-sparse.
%
%
\end{proof}

Our next result extends the previous claim by showing, in particular, that $G[X]$ is equal to the first graph of Figure \ref{C3-fig:possibilities}.  Let $Y=V\sm X$.

\begin{claim} \label{clm:6cycle} $G[X]$ is a cycle of length six, $|E_G(X,Y)|=6$,  $i_G(Y)=2|Y|-3$ and $G[Y]$ is rigid.
\end{claim}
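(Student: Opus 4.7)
My plan is to derive all four assertions from a sharp degree count. Because $E$ was already shown to be a base of $\mathcal{M}_T(K)$, we have $|E|=2|V|-3$, and hence $\sum_{v\in V} d(v) = 4|V|-6$. Claim \ref{clm:mindeg3} gives $d(v)\geq 3$ for every vertex, and Claim \ref{clm:Xdeg3} says that every vertex of degree $3$ lies in $X$. Since Claim \ref{clm:4cycle} forces $G[X]$ to contain the first graph of Figure \ref{C3-fig:possibilities} as a spanning subgraph, we have $|W|=3$ and $|X|=6$. The lower bound $3|X|+4|Y|=18+4|Y|=4|V|-6$ on the degree sum then matches it exactly, so every vertex of $X$ has degree exactly $3$ and every vertex of $Y$ has degree exactly $4$.

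Counting edges incident with $X$ gives $2i_G(X)+|E_G(X,Y)|=\sum_{v\in X}d(v)=18$, and combining this with $i_G(X)+|E_G(X,Y)|+i_G(Y)=|E|=9+2|Y|$ yields $i_G(Y)=i_G(X)+2|Y|-9$. Assuming $|Y|\geq 2$, strong $T$-sparsity applied to $Y$ (which is disjoint from $T$) gives $i_G(Y)\leq 2|Y|-3$, so $i_G(X)\leq 6$. On the other hand, the spanning $6$-cycle supplied by Claim \ref{clm:4cycle} gives $i_G(X)\geq 6$. Therefore $i_G(X)=6$, which simultaneously shows that $G[X]$ is exactly this $6$-cycle, that $|E_G(X,Y)|=6$, and that $i_G(Y)=2|Y|-3$. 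The rigidity of $G[Y]$ then follows from Pollaczek--Geiringer/Laman, since $G[Y]$ has precisely $2|Y|-3$ edges while strong $T$-sparsity, applied to each $Y'\subseteq Y$ with $|Y'|\geq 2$, supplies the Maxwell inequality $i_G(Y')\leq 2|Y'|-3$.

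The main obstacle is to rule out the degenerate cases $|Y|\in\{0,1\}$, where the bound $i_G(Y)\leq 2|Y|-3$ is vacuous or inapplicable. For $|Y|=0$ the $3$-regularity of $G$ together with $i_G(T)=0$ forces each $T$-vertex to be joined to the unique $W$-vertex it misses in the $6$-cycle, so $G\cong K_{3,3}$; this however violates strong $T$-sparsity since the family $\Hc_0=\{T\cup\{a\}:a\in W\}$ then has $i_G(\Hc_0)=9>7=\val_T(\Hc_0)$. For $|Y|=1$, the degree count forces $|E_G(X,Y)|=4$ and $i_G(X)=7$, and a case analysis of where the extra edge in $X$ may lie (inside $W$ or as a further $W$-$T$ edge), combined with a suitable $S$-compatible family for some $S\subsetneq T$ of size two, should again contradict strong $T$-sparsity. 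I expect this small-$|Y|$ subcase to require the most care, since the counting no longer closes by itself and one must identify a violating $S$-compatible family by hand.
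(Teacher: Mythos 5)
Your counting for $|Y|\geq 2$ is correct and is essentially a reorganisation of the paper's argument: the paper sets $\alpha=|E(G[X])\setminus E(G_1)|$, where $G_1$ is the spanning $6$-cycle, and derives $2|V|-3\leq (6+\alpha)+(6-2\alpha)+(2|Y|-3)$, which is the same bookkeeping as your identity $2i_G(X)+|E_G(X,Y)|=18$ combined with $i_G(Y)\leq 2|Y|-3$. Your appeal to Laman's theorem for the rigidity of $G[Y]$ is also fine (the paper instead invokes the minimality of the counterexample, but both work).

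The genuine gap is the case $|Y|=1$, which you explicitly leave open; it must be closed, since the bound $i_G(Y)\leq 2|Y|-3$ is unavailable there and the claim itself cannot hold unless $|Y|\geq 2$. The paper avoids the small cases entirely by first proving $|Y|\geq 3$: since $G[X]$ cannot contain the second graph of Figure \ref{C3-fig:possibilities} as a spanning subgraph (Claim \ref{clm:4cycle}), every edge of $E_G(T,W)$ already lies in the $6$-cycle $G_1$, so each vertex of $T$ has exactly two neighbours in $W$; as $i_G(T)=0$ by $T$-sparsity and each vertex of $T$ has degree at least three, each vertex of $T$ has a neighbour in $Y$, and these neighbours are distinct because a vertex of $Y$ with two neighbours in $T$ would belong to $W$. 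Hence $|Y|\geq 3$. If you prefer to finish your own route, the $|Y|=1$ case closes quickly: your degree count gives $i_G(X)=7$, and the extra edge of $G[X]$ beyond the $6$-cycle must be a chord inside $W$ (an extra $T$--$W$ edge would recreate the second graph of Figure \ref{C3-fig:possibilities}, and $i_G(T)=0$ rules out an edge inside $T$); the four vertices of $X$ not covered by that chord, which include all three vertices of $T$, must then each send their third edge to the unique vertex $y\in Y$, so $y$ has at least two neighbours in $T$ and hence lies in $W\subseteq X$, a contradiction. You should in any case record $|Y|\geq 3$ explicitly, since the remainder of the proof of Theorem \ref{C3-thm:count_imp_geom} requires three distinct vertices of $Y$ adjacent to $T$.
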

\begin{proof}[Proof of Claim] 
Let $G_1$ be the first graph of Figure \ref{C3-fig:possibilities}.
Then $G_1$ is a spanning subgraph of $G[X]$ by  (\ref{eq:equalfig}) and Claim \ref{clm:4cycle}. 
If there exists an edge in $E_G(T,W)\setminus E(G_1)$ then $G[X]$ would contain the second graph in  Figure \ref{C3-fig:possibilities} as a spanning subgraph, contradicting Claim \ref{clm:4cycle}.
Thus $E_G(T,W)\subseteq E(G_1)$.
Claims \ref{clm:mindeg3}, the definition of $W$ and the fact that $E_G(T,W)\subseteq E(G_1)$ imply that each vertex in $T$ is adjacent to a distinct vertex in $Y$ and hence $|Y|\geq 3$.
Since $G$ is $T$-sparse, we have $i_G(Y)\leq  2|Y|-3$.
Let $\alpha$ be the number of edges in $G[X]-E(G_1)$. Then the fact that each vertex of $X$ has degree three in $G$ gives
\begin{align*}
2|V(G)|-3=|E(G)|=i_G(X)+|E_G(X,Y)|+i_G(Y)&\leq (6+\alpha)+(6-2\alpha)+2|Y|-3\\
&=2|V(G)|-3-\alpha.
\end{align*}
Hence $\alpha=0$ and equality holds throughout. This gives $G[X]=G_1$,
$|E_G(X,Y)|=6$ and $i_G(Y)= 2|Y|-3$. Since $G[Y]$ is $S$-sparse for any $S\subset Y$ with $|S|=1$, the minimality of $G$ implies that $G[Y]$ is rigid.
\end{proof}

%

We can now complete the proof of the theorem.
We first consider the case when $|Y|=3$.
The seven possibilities for $G$  when $|Y|=3$ are shown in Figure \ref{C3-fig:base_cases}. We can verify that the realisation $p:V\to \R^2$ shown for the first graph gives  an infinitesimally rigid, $T$-coincident realisation for all seven graphs by calculating the rank of the corresponding rigidity matrices. Hence the theorem holds when $|Y|=3$ so we must have $|Y|\geq 4$.

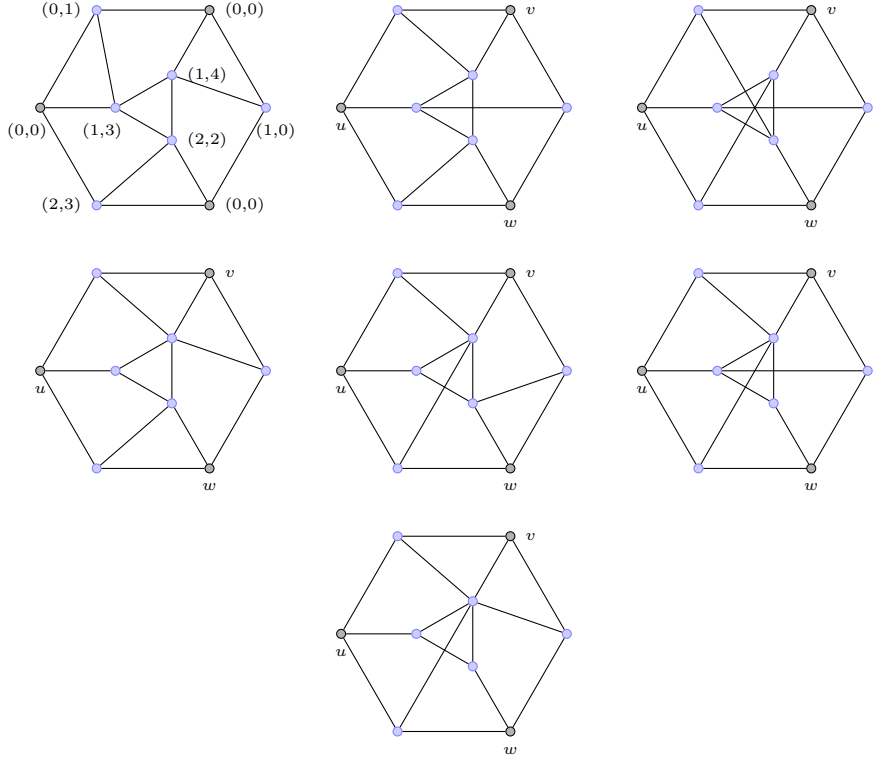
\begin{figure}[ht]
\begin{center}

\begin{tikzpicture}[scale=0.5,font=\tiny]

\node[nodeblack] at (180:3cm) (u) [label=below:\text{(0,0)\ \ \ \ }] {};
\node[nodeblack] at (60:3cm) (v) [label=right:\text{(0,0)}] {};
\node[nodeblack] at (300:3cm) (w) [label=right:\text{(0,0)}] {};

\node[nodde] at (120:3cm) (a) [label=left:\text{(0,1)}] {}
	edge[] (u)
	edge[] (v);
\node[nodde] at (0:3cm) (b) [label=below:\text{\ \ \ (1,0)}] {}
	edge[] (v)
	edge[] (w);
\node[nodde] at (240:3cm) (c) [label=left:\text{(2,3)}] {}
	edge[] (u)
	edge[] (w);

\node[nodde] at (180:1cm) (d) [label=below:\text{(1,3)\ \ \ \ }]{}
	edge[] (a)
	edge[] (u);
\node[nodde] at (60:1cm) (e) [label=right:\text{(1,4)}]{}
	edge[] (b)
	edge[] (v)
	edge[] (d);
\node[nodde] at (300:1cm) (f) [label=right:\text{(2,2)}]{}
	edge[] (c)
	edge[] (w)
	edge[] (d)
	edge[] (e);
	
\begin{scope}[xshift=8cm]
\node[nodeblack] at (180:3cm) (u) [label=below:$u$] {};
\node[nodeblack] at (60:3cm) (v) [label=right:$v$] {};
\node[nodeblack] at (300:3cm) (w) [label=below:$w$] {};

\node[nodde] at (120:3cm) (a) [] {}
	edge[] (u)
	edge[] (v);
\node[nodde] at (0:3cm) (b) [] {}
	edge[] (v)
	edge[] (w);
\node[nodde] at (240:3cm) (c) [] {}
	edge[] (u)
	edge[] (w);

\node[nodde] at (180:1cm) (d) []{}
	edge[] (u)
	edge[] (b);
\node[nodde] at (60:1cm) (e) []{}
	edge[] (v)
	edge[] (d)
	edge[] (a);
\node[nodde] at (300:1cm) (f) []{}
	edge[] (w)
	edge[] (d)
	edge[] (e)
	edge[] (c);
\end{scope}
\begin{scope}[xshift=16cm]
\node[nodeblack] at (180:3cm) (u) [label=below:$u$] {};
\node[nodeblack] at (60:3cm) (v) [label=right:$v$] {};
\node[nodeblack] at (300:3cm) (w) [label=below:$w$] {};

\node[nodde] at (120:3cm) (a) [] {}
	edge[] (u)
	edge[] (v);
\node[nodde] at (0:3cm) (b) [] {}
	edge[] (v)
	edge[] (w);
\node[nodde] at (240:3cm) (c) [] {}
	edge[] (u)
	edge[] (w);

\node[nodde] at (180:1cm) (d) []{}
	edge[] (u)
	edge[] (b);
\node[nodde] at (60:1cm) (e) []{}
	edge[] (v)
	edge[] (d)
	edge[] (c);
\node[nodde] at (300:1cm) (f) []{}
	edge[] (w)
	edge[] (d)
	edge[] (e)
	edge[] (a);
\end{scope}

\begin{scope}[yshift=-7cm]
\node[nodeblack] at (180:3cm) (u) [label=below:$u$] {};
\node[nodeblack] at (60:3cm) (v) [label=right:$v$] {};
\node[nodeblack] at (300:3cm) (w) [label=below:$w$] {};

\node[nodde] at (120:3cm) (a) [] {}
	edge[] (u)
	edge[] (v);
\node[nodde] at (0:3cm) (b) [] {}
	edge[] (v)
	edge[] (w);
\node[nodde] at (240:3cm) (c) [] {}
	edge[] (u)
	edge[] (w);

\node[nodde] at (180:1cm) (d) []{}
	edge[] (u);
\node[nodde] at (60:1cm) (e) []{}
	edge[] (v)
	edge[] (d)
	edge[] (a)
	edge[] (b);
\node[nodde] at (300:1cm) (f) []{}
	edge[] (w)
	edge[] (d)
	edge[] (e)
	edge[] (c);
\end{scope}

\begin{scope}[xshift=8cm,yshift=-7cm]
\node[nodeblack] at (180:3cm) (u) [label=below:$u$] {};
\node[nodeblack] at (60:3cm) (v) [label=right:$v$] {};
\node[nodeblack] at (300:3cm) (w) [label=below:$w$] {};

\node[nodde] at (120:3cm) (a) [] {}
	edge[] (u)
	edge[] (v);
\node[nodde] at (0:3cm) (b) [] {}
	edge[] (v)
	edge[] (w);
\node[nodde] at (240:3cm) (c) [] {}
	edge[] (u)
	edge[] (w);

\node[nodde] at (180:1cm) (d) []{}
	edge[] (u);
\node[nodde] at (60:1cm) (e) []{}
	edge[] (v)
	edge[] (d)
	edge[] (a)
	edge[] (c);
\node[nodde] at (300:1cm) (f) []{}
	edge[] (w)
	edge[] (d)
	edge[] (e)
	edge[] (b);
\end{scope}

\begin{scope}[xshift=16cm,yshift=-7cm]
\node[nodeblack] at (180:3cm) (u) [label=below:$u$] {};
\node[nodeblack] at (60:3cm) (v) [label=right:$v$] {};
\node[nodeblack] at (300:3cm) (w) [label=below:$w$] {};

\node[nodde] at (120:3cm) (a) [] {}
	edge[] (u)
	edge[] (v);
\node[nodde] at (0:3cm) (b) [] {}
	edge[] (v)
	edge[] (w);
\node[nodde] at (240:3cm) (c) [] {}
	edge[] (u)
	edge[] (w);

\node[nodde] at (180:1cm) (d) []{}
	edge[] (u)
	edge[] (b);
\node[nodde] at (60:1cm) (e) []{}
	edge[] (v)
	edge[] (d)
	edge[] (a)
	edge[] (c);
\node[nodde] at (300:1cm) (f) []{}
	edge[] (w)
	edge[] (d)
	edge[] (e);
\end{scope}

\begin{scope}[xshift=8cm,yshift=-14cm]
\node[nodeblack] at (180:3cm) (u) [label=below:$u$] {};
\node[nodeblack] at (60:3cm) (v) [label=right:$v$] {};
\node[nodeblack] at (300:3cm) (w) [label=below:$w$] {};

\node[nodde] at (120:3cm) (a) [] {}
	edge[] (u)
	edge[] (v);
\node[nodde] at (0:3cm) (b) [] {}
	edge[] (v)
	edge[] (w);
\node[nodde] at (240:3cm) (c) [] {}
	edge[] (u)
	edge[] (w);

\node[nodde] at (180:1cm) (d) []{}
	edge[] (u);
\node[nodde] at (60:1cm) (e) []{}
	edge[] (v)
	edge[] (d)
	edge[] (a)
	edge[] (b)
	edge[] (c);
\node[nodde] at (300:1cm) (f) []{}
	edge[] (w)
	edge[] (d)
	edge[] (e);
\end{scope}

\end{tikzpicture}

\end{center}
\caption{The possible graphs with $G[X]\cong C_6$ and $|Y|=3$. The vertices in $Y$ are drawn
inside the outer six-cycle which corresponds to $G[X]$.
}
\label{C3-fig:base_cases}
\end{figure}

Let $y_1,y_2,y_3$ be the vertices in $Y$ which are adjacent to $T$ and let $\{Y_1,Y_2,Y_3\}$ be a partition of $Y$ such that $y_i\in Y_i$ for all $1\leq i\leq 3$. 
Let $G'=(V',E')$ be obtained from $G$ by contracting each set $Y_i$ to a single vertex $y_i$ for all $1\leq i\leq 3$ and then adding an edge $y_iy_j$ for all non-adjacent pairs $y_i,y_j$ with $1\leq i<j\leq 3$. Put $Y'=\{y_1,y_2,y_3\}$. Then $G'$ is one of the graphs in  
Figure \ref{C3-fig:base_cases} so has an infinitesimally rigid $T$-coincident realisation $(G',p')$ by the previous paragraph. Then $(G,p)$ is infinitesimally  rigid for some $p:V\to \R^2$ with
$p|_{V\sm Y}=p'|_{V'\sm Y'}$ by Lemma \ref{lem:replace}. This implies that $(G,p)$ is independent since $|E|=2|V|-3$. This contradicts the choice of $G$ and completes the proof of the theorem. 
\end{proof}

\subsection{Proof of Theorem \ref{C3-thm:int_main1}}
Let $G=(V,E)$, $T=\{u,v,w\}$ and $E_T$ be the set of all edges of $G[T]$.

Necessity follows since, if $G$ has an an infinitesimally rigid
$T$-coincident realisation $(G,p)$, then  $(G',p)$ is infinitesimally rigid and (\ref{eq:dimKer})  implies that $(G'/S,p_S)$ is infinitesimally rigid
 for all $S\subseteq T$
with $|S|\geq 2$.

For sufficiency, we  assume that $G'$ and $G'/S$ are rigid  for all
$S\subseteq T$ with $|S|\geq 2$ and prove that $G$ is $T$-coincident rigid. Suppose, for a contradiction, that this is not the case. Then Theorems \ref{C3-thm:count_matroid} and \ref{C3-thm:count_imp_geom},
imply that, for some $S\subseteq T$ with $|S|\geq 2$, there exists a $1$-thin, augmented $S$-compatible family $\Lc=\{\Hc,X_1,X_2,\ldots,X_m\}$ in $G$ which covers $E-E_T$ and has $\val_S(\Lc)\leq 2|V|-4$. If $\Hc=\emptyset$ then we would have $\sum_{i=1}^m(2|X_i-3)\leq 2|V|-4$ and Theorem \ref{thm:LY} would imply that 
$G'$ is not rigid.
Hence 
$\Hc\neq \emptyset$.

%
%

Consider the graph $G'/S$ obtained from $G'$ by contracting the vertices in $S$  into a new vertex $z$. Then  $\mathcal{L}'=\{H_1',\ldots,H_k',X_1,\ldots,X_m\}$ is a 1-thin cover of $G/S$,
where $H_i'=(H_i\sm S)\cup \{z\}$ for each $H_i\in \Hc$. Then we have
\begin{align*}
\sum_{i=1}^{k}{(2|H_i'|-3)}+\sum_{i=1}^{m}{}(2|X_i|-3)
&=\sum_{i=1}^{k}(2|H_i\setminus S|-1)
+\sum_{i=1}^{m}(2|X_i|-3)
\\
&=\val_S(\mathcal{L})-2(|S|-1)\\
&\leq 2|V|-4-2(|S|-1)\\
&=2(|V|-(|S|-1))-4.
\end{align*}
This contradicts the assumption that  $G'/S$ is rigid by Theorem \ref{thm:LY}.
\qed

\medskip
The graph in Figure \ref{C3-fig:counter_example} shows that we cannot replace the hypothesis of Theorem \ref{C3-thm:int_main1} that $G'/S$ is rigid for all $S\subseteq T$ with $|S|\geq 2$ by the weaker hypothesis that  $G'/T$ is rigid. 
%
%

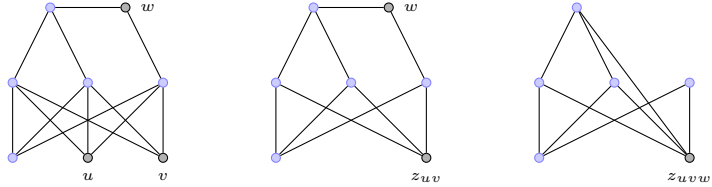
\begin{figure}[ht]
\begin{center}

\begin{tikzpicture}[scale=0.5,font=\tiny]

\node[nodde] at (-2cm,0cm) (a) [] {};
\node[nodeblack] at (0cm,0cm) (u) [label=below:$u$] {};
\node[nodeblack] at (2cm,0cm) (v) [label=below:$v$] {};
\node[nodde] at (-2cm,2cm) (b) [] {}
	edge[] (a)
	edge[] (u)
	edge[] (v);
\node[nodde] at (0cm,2cm) (c) [] {}
	edge[] (a)
	edge[] (u)
	edge[] (v);
\node[nodde] at (2cm,2cm) (d) [] {}
	edge[] (a)
	edge[] (u)
	edge[] (v);
\node[nodde] at (-1cm,4cm) (e) [] {}
	edge[] (b)
	edge[] (c);
\node[nodeblack] at (1cm,4cm) (w) [label=right:$w$] {}
	edge[] (e)
	edge[] (d);
	
\begin{scope}[xshift=7cm]
\node[nodde] at (-2cm,0cm) (a) [] {};
\node[nodeblack] at (2cm,0cm) (u) [label=below:$z_{uv}$] {};
\node[nodde] at (-2cm,2cm) (b) [] {}
	edge[] (a)
	edge[] (u);
\node[nodde] at (0cm,2cm) (c) [] {}
	edge[] (a)
	edge[] (u);
\node[nodde] at (2cm,2cm) (d) [] {}
	edge[] (a)
	edge[] (u);
\node[nodde] at (-1cm,4cm) (e) [] {}
	edge[] (b)
	edge[] (c);
\node[nodeblack] at (1cm,4cm) (w) [label=right:$w$] {}
	edge[] (e)
	edge[] (d);
\end{scope}

\begin{scope}[xshift=14cm]
\node[nodde] at (-2cm,0cm) (a) [] {};
\node[nodeblack] at (2cm,0cm) (u) [label=below:$z_{uvw}$] {};
\node[nodde] at (-2cm,2cm) (b) [] {}
	edge[] (a)
	edge[] (u);
\node[nodde] at (0cm,2cm) (c) [] {}
	edge[] (a)
	edge[] (u);
\node[nodde] at (2cm,2cm) (d) [] {}
	edge[] (a)
	edge[] (u);
\node[nodde] at (-1cm,4cm) (e) [] {}
	edge[] (b)
	edge[] (c)
	edge[] (u);
\end{scope}

\end{tikzpicture}

\end{center}
\caption{The graph on the left is $G$, $T=\{u,v,w\}$ and $G'=G$. The graph in the middle is
$G/uv$ and the graph on the right is $G/T$. Both $G$ and $G/T$ are rigid, but $G/uv$ is not. Hence $G$ is not $T$-coincident rigid
by Theorem \ref{C3-thm:int_main1}.}
\label{C3-fig:counter_example}
\end{figure}

\section{Closing Remarks}\label{sec:close}

\subsection{\boldmath Extension to $|T|\geq 3$} 
We believe that Theorem \ref{C3-thm:count_imp_geom} can be extended to sets $T$ of arbitrary size.

\begin{conj}\label{C3-con:count_imp_geom}
Let $G=(V,E)$ be a graph and $T$ be a non-empty subset of $V$. Then $E$ is independent in $\mathcal{R}_{T}(G)$ if and only if $G$ is strongly $T$-sparse.
\end{conj}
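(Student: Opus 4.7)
Necessity is immediate: if $(G,p)$ is an independent $T$-coincident realisation then it is also an independent $S$-coincident realisation for every $S\subseteq T$ with $|S|\geq 1$, so Lemma \ref{C3-lem:geom_imp_count} applied to each such $S$ forces $G$ to be strongly $T$-sparse. For sufficiency I would follow the two-stage strategy that succeeds for $|T|\leq 3$: first upgrade Theorem \ref{C3-thm:count_matroid} so that the strongly $T$-sparse edge sets form the independent sets of a matroid $\mathcal M_T(G)$ with an explicit rank formula, and then argue by induction on $|V|$ that strong $T$-sparsity implies independence in $\mathcal R_T(G)$.

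The matroid step should go through with essentially no new ideas. Lemmas \ref{C3-lem:Hi-intersection}--\ref{C3-lem:max_cover} are purely set-theoretic and make no use of the bound $|T|\leq 3$. Accordingly, given a maximal strongly $T$-sparse $F\subseteq E'$, I would use Lemma \ref{C3-lem:max_cover} to choose a maximal pair $(S,\Hc)$ consisting of $S\subseteq T$ with $|S|\geq 2$ and an $S$-tight $S$-compatible family $\Hc$ (possibly empty, in which case $S=T$) that maximises $\cov(\Hc)$; augmenting $\Hc$ with the maximal $S$-tight sets of $(V,F\setminus E_J(\Hc))$ would yield a $1$-thin augmented $S$-compatible family whose $S$-value equals $|F|$ and which covers $E'\setminus E_T$. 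The verification of (T.1)--(T.3) would proceed exactly as in Claim \ref{clm:rankF}, using Lemmas \ref{C3-lem:tight_set_int}, \ref{C3-lem:Hi-intersection} and \ref{C3-lem:Hi_Y-int1}--\ref{C3-lem:Hi_Y-int0}.

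For the inductive step, I would take a minimal counterexample $(G,T)$, chosen with $|E|$ maximal and with as many vertices of degree at most $3$ as possible, and show that $|E|=2|V|-3$ and $\delta(G)\geq 3$ by the arguments in Claim \ref{clm:mindeg3}: a vertex $z\notin T$ of degree $2$ with at most one neighbour in $T$ is reduced via Lemma \ref{lem:0ext}; a vertex $z\notin T$ of degree $2$ with both neighbours in $T$ contradicts strong $T$-sparsity applied to the family $\{T\cup\{z\},V\setminus\{z\}\}$; and a vertex $z\in T$ of degree $\leq 2$ is removed by passing to $T'=T\setminus\{z\}$. Lemma \ref{C3-lem:1-red}(b), whose proof is matroidal and extends verbatim once the matroid $\mathcal M_T(G)$ is available, would then allow any degree-$3$ vertex outside $X:=T\cup W$ to be reduced via a $1$-extension (where $W$ is the set of vertices with at least two neighbours in $T$), so all vertices of degree at most $3$ must lie in $X$.

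The main obstacle, as in the proof of Theorem \ref{C3-thm:count_imp_geom}, is the remaining endgame: the $T$-compatible family $\Hc_0=\{T\cup\{w\}:w\in W\}$ together with the Maxwell count and $\delta(G)\geq 3$ yields structural restrictions on $G[X]$, but for $|T|\geq 4$ one cannot simply enumerate spanning subgraphs of $G[X]$ as in Figure \ref{C3-fig:possibilities}. My plan would be to use these counts to locate either a pair $u,v\in T$ with a common pair of neighbours in $W$ (so that Lemma \ref{lem:split} can be applied to $G/uv$ after verifying strong $T'$-sparsity for $T'=(T\setminus\{v\})\cup\{z_{uv}\}$ via a generalisation of Claim \ref{clm:4cycle}), or a rigid subgraph on $T$ inside $G[X]$ which can be contracted and expanded using Lemma \ref{lem:replace}. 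Showing that one of these reducible configurations always exists, and that the reduced graph remains strongly $T'$-sparse, is precisely the step that currently forces the restriction $|T|\leq 3$, and I expect it to be the genuine difficulty in resolving the conjecture.
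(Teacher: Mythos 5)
The statement you are addressing is stated in the paper as Conjecture \ref{C3-con:count_imp_geom}; the paper offers no proof of it, only the supporting evidence of Theorem \ref{thm:hakan} (the matroid property for arbitrary $T$, quoted from \cite{G}). Your proposal is likewise not a proof: you explicitly concede that the endgame --- locating a reducible configuration in $G[X]$ when $|T|\geq 4$ and verifying that the reduction preserves strong sparsity --- is unresolved, and that concession is exactly where the open problem lives. What you have written is an accurate road map of how one would try to imitate the proof of Theorem \ref{C3-thm:count_imp_geom}, not an argument that closes the conjecture.

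One place where your sketch is too optimistic even as a road map: you assert that the matroid step ``should go through with essentially no new ideas'' because Lemmas \ref{C3-lem:Hi-intersection}--\ref{C3-lem:max_cover} make no use of $|T|\leq 3$. But the proof of Theorem \ref{C3-thm:count_matroid} relies on Lemma \ref{C3-lem:max_cover} to merge every tight compatible family into a single maximal pair $(S,\Hc)$, and that lemma carries the hypothesis $S_1\cap S_2\neq\emptyset$. When $|T|\leq 3$ any two subsets of $T$ of size at least two automatically intersect, so the hypothesis is free; when $|T|\geq 4$ one can have disjoint $S_1,S_2\subseteq T$ with $|S_1|=|S_2|=2$, and the paper provides no mechanism for combining an $S_1$-tight family with an $S_2$-tight family in that situation. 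This is presumably why the general matroid statement is deferred to \cite{G} as a separate theorem rather than obtained by rerunning the argument of Section \ref{sec:matroidrank}. Your necessity argument, by contrast, is correct and matches the paper's remarks following Lemma \ref{C3-lem:geom_imp_count}.
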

 
 The following result from the PhD thesis of the first author \cite{G} gives some evidence in support of this conjecture.
 
\begin{theorem}\label{thm:hakan}
Let $G=(V,E)$ be a graph, $T$ be a non-empty subset of $V$ and 
$$\mathcal{I}=\{I\subseteq E:G'=(V,I) \mbox{ is strongly $T$-sparse}\}.$$ Then $\mathcal{I}$ is the family of independent sets in a matroid on $E$.
\end{theorem}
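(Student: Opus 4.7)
My plan is to prove Theorem~\ref{thm:hakan} by verifying that $\mathcal{I}$ satisfies the three matroid independence axioms (I.1), (I.2), (I.3). Axioms (I.1) (the empty set is strongly $T$-sparse) and (I.2) (closure under subsets) follow immediately from the definition of strong $T$-sparsity, so all the substantive work goes into verifying (I.3). I would follow the strategy of the proof of Theorem~\ref{C3-thm:count_matroid}, but with the notion of a $1$-thin augmented $S$-compatible family generalised to allow simultaneous contributions from several sets $S\subseteq T$ with $|S|\geq 2$. The point is that for $|T|\leq 3$ any two sets $S_1,S_2\subseteq T$ with $|S_i|\geq 2$ satisfy $S_1\cap S_2\neq \emptyset$, so Lemma~\ref{C3-lem:max_cover} collapses every collection of tight families into a single one; this fails when $|T|\geq 4$.

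So I would introduce a \emph{generalised $1$-thin $T$-cover} of the form $\mathcal{L}=\{(\mathcal{H}_1,S_1),\ldots,(\mathcal{H}_t,S_t),X_1,\ldots,X_m\}$, where each $\mathcal{H}_\alpha$ is an $S_\alpha$-compatible family with $S_\alpha\subseteq T$, $|S_\alpha|\geq 2$, the sets $S_\alpha$ are pairwise disjoint, each $X_i\subseteq V$ has $|X_i|\geq 2$, the analogues of (T.1)--(T.3) hold between the $X_i$'s and each $\mathcal{H}_\alpha$, and additionally $\cov(\mathcal{H}_\alpha)\cap\cov(\mathcal{H}_\beta)=\emptyset$ for $\alpha\neq \beta$. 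Its value is
\[
\val_T(\mathcal{L})=\sum_{\alpha=1}^{t}\val_{S_\alpha}(\mathcal{H}_\alpha)+\sum_{i=1}^m (2|X_i|-3).
\]
An extension of~(\ref{eq:thin}), proved by summing the $S$-sparsity bounds for each piece of the cover separately, then yields $i_G(\mathcal{L})\leq \val_T(\mathcal{L})$ whenever $G$ is strongly $T$-sparse.

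With this setup, the core step is an analogue of Claim~\ref{clm:rankF}: given any $E'\subseteq E$ and any maximal $F\in \mathcal{I}$ with $F\subseteq E'$, I would construct a generalised $1$-thin $T$-cover $\mathcal{L}$ of $E'\setminus E_T$ with $\val_T(\mathcal{L})=|F|$. To do this, starting from $J=(V,F)$, I would enumerate all pairs $(\mathcal{H},S)$ with $\mathcal{H}$ an $S$-tight, $S$-compatible family in $J$, and iteratively apply Lemma~\ref{C3-lem:max_cover} to merge those whose $S$'s meet. What remains is a finite collection $\{(\mathcal{H}_\alpha,S_\alpha)\}_{\alpha=1}^t$ with pairwise disjoint $S_\alpha$'s such that each $\mathcal{H}_\alpha$ has inclusion-maximal cover among all tight families with support $S_\alpha$. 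Letting $F_0=F\setminus \bigcup_\alpha E_J(\mathcal{H}_\alpha)$, I would take $X_1,\ldots,X_m$ to be the maximal $T$-tight sets of $(V,F_0)$. Lemma~\ref{C3-lem:tight_set_int} ensures that the $X_i$'s satisfy~(T.1), Lemma~\ref{C3-lem:Hi-intersection} together with maximality of the $\mathcal{H}_\alpha$ gives~(T.2) inside each $\mathcal{H}_\alpha$, and Lemmas~\ref{C3-lem:Hi_Y-int1} and~\ref{C3-lem:Hi_Y-int0} give~(T.3) between $X_i$'s and each $\mathcal{H}_\alpha$. A counting identity analogous to the one at the end of the proof of Claim~\ref{clm:rankF}, summed over $\alpha$, delivers $|F|=\val_T(\mathcal{L})$. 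Finally, the maximality of $F$ in $\mathcal{I}$ is used, as in the $|T|\leq 3$ case, to show that every edge of $E'\setminus E_T$ is covered by some $\mathcal{H}_\alpha$ or $X_i$.

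Combining this construction with the upper bound $i_G(\mathcal{L})\leq \val_T(\mathcal{L})$ shows that $|F|$ equals $\min_{\mathcal{L}}\val_T(\mathcal{L})$ over all generalised $1$-thin $T$-covers of $E'\setminus E_T$; since this minimum does not depend on the choice of $F$, axiom~(I.3) follows. The main obstacle I anticipate is precisely the one that forces the generalisation: disjoint supports $S_\alpha$ force one to keep track of several tight families simultaneously, and the thinness conditions between the $X_i$'s and different $\mathcal{H}_\alpha$'s have to be verified carefully so that the edges covered by different pieces of $\mathcal{L}$ are correctly counted. Checking that each $X_i$ can interact non-trivially with at most one $\mathcal{H}_\alpha$ (using the disjointness of the $S_\alpha$'s together with Lemmas~\ref{C3-lem:Hi_Y-int1} and~\ref{C3-lem:Hi_Y-int0} applied one $\alpha$ at a time) is the step where the argument is most delicate.
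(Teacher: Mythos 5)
The paper does not actually prove Theorem~\ref{thm:hakan}: it is quoted from the first author's thesis \cite{G}, so the only proof available for comparison is the $|T|\leq 3$ argument of Theorem~\ref{C3-thm:count_matroid}. Your outline is the natural extension of that argument, and you have correctly identified the new phenomenon that appears for $|T|\geq 4$: subsets $S_1,S_2\subseteq T$ with $|S_i|\geq 2$ need no longer intersect, so Lemma~\ref{C3-lem:max_cover} can no longer collapse all tight compatible families into a single one, and the cover witnessing the rank must be allowed to contain several families $\mathcal{H}_\alpha$ with pairwise disjoint supports $S_\alpha$. The verification of (I.1) and (I.2), the use of Lemma~\ref{C3-lem:tight_set_int} for (T.1), and the overall shape of the analogue of Claim~\ref{clm:rankF} are all as they should be.

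The genuine gap is the interaction \emph{between two distinct families} $\mathcal{H}_\alpha$ and $\mathcal{H}_\beta$ with $S_\alpha\cap S_\beta=\emptyset$. You stipulate $\cov(\mathcal{H}_\alpha)\cap\cov(\mathcal{H}_\beta)=\emptyset$ as part of the definition of a generalised cover, but nothing in your construction guarantees it: after the merging phase you may well be left with $H\in\mathcal{H}_\alpha$ and $H'\in\mathcal{H}_\beta$ satisfying $|H\cap H'|\geq 2$, and then edges induced by $H\cap H'$ are double-counted, so neither the bound $i_G(\mathcal{L})\leq\val_T(\mathcal{L})$ (which you propose to obtain by ``summing the $S$-sparsity bounds for each piece separately'') nor the identity $|F|=\val_T(\mathcal{L})$ follows. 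None of the lemmas in the paper covers this configuration: Lemma~\ref{C3-lem:max_cover} explicitly requires $S_1\cap S_2\neq\emptyset$, and Lemmas~\ref{C3-lem:Hi-intersection}, \ref{C3-lem:Hi_Y-int1} and \ref{C3-lem:Hi_Y-int0} compare an $S$-compatible family only with a plain set $Y$ subject to $|Y\cap S|\leq 1$ and valued at $2|Y|-3$, whereas a member $H'\in\mathcal{H}_\beta$ contributes $2|H'\setminus S_\beta|-1$ to $\val_{S_\beta}(\mathcal{H}_\beta)$ and may meet $S_\alpha$ in two or more vertices (it must contain all of $S_\beta$ and can contain further vertices of $T$). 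A complete proof needs a new merging/uncrossing lemma for two tight compatible families with disjoint supports --- showing either that their covers can be made disjoint without increasing the total value, or that an overlap forces a violation of strong $T$-sparsity --- and this is exactly the step your outline leaves unaddressed.
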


\subsection{\boldmath Extension to $d\geq 3$}\label{subsec:K55}
It is natural to ask whether Theorems \ref{C3-thm:uv-FJK} and \ref{C3-thm:int_main1} can be extended to $\mathbb{R}^d$ for $d\geq 3$.
We will  use the following result on flexible realisations of complete bipartite graphs which follows easily from a result of Bolker and Roth \cite[Theorem 10]{BR}. It is stated explicitly in a paper of Whiteley as an immediate corollary of \cite[Theorem 1]{Whi}. Whiteley makes the simplifying assumption at the beginning of \cite{Whi} that all frameworks $(G,p)$ have $p(u)\neq p(v)$ whenever $uv\in E(G)$ but this assumption is not used in his proof of 
\cite[Theorem 1]{Whi}.

%
%
%
\begin{lemma}\label{C4-lem:nine_points_quadric}
Let $(K_{m,n},p)$ be a realisation of the complete bipartite graph $K_{m,n}$ with all its vertices on a quadric surface in $\R^d$ for some $m,n,d\geq 2$.
Then $(K_{m,n},p)$ is not infinitesimally rigid in $\R^d$.
\end{lemma}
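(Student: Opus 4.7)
The plan is to exhibit an explicit non-trivial infinitesimal flex of $(K_{m,n},p)$ read directly off the defining equation of the quadric. Let $(U,W)$ denote the bipartition of $K_{m,n}$, and write the quadric surface as
\[
Q=\{x\in\R^d:f(x):=x^{\top}Ax+2b^{\top}x+c=0\}
\]
for a symmetric $d\times d$ matrix $A$, a vector $b\in\R^d$, and a scalar $c\in\R$. Define a map $q:V(K_{m,n})\to \R^d$ by
\[
q(u)=Ap(u)+b \quad\text{for } u\in U, \qquad q(w)=-Ap(w)-b \quad\text{for } w\in W.
\]

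Next I would verify that $q$ lies in the kernel of $R(K_{m,n},p)$. For every edge $uw$ with $u\in U$ and $w\in W$, using the symmetry of $A$ together with $f(p(u))=f(p(w))=0$, one computes
\begin{align*}
(p(u)-p(w))\cdot(q(u)-q(w))
&=(p(u)-p(w))^{\top}\bigl(Ap(u)+Ap(w)+2b\bigr)\\
&=\bigl(p(u)^{\top}Ap(u)+2b^{\top}p(u)\bigr)-\bigl(p(w)^{\top}Ap(w)+2b^{\top}p(w)\bigr)\\
&=(-c)-(-c)=0,
\end{align*}
so $q$ is an infinitesimal motion of $(K_{m,n},p)$.

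It then remains to argue that $q$ is not an infinitesimal isometry of $\R^d$. Any infinitesimal isometry has the form $x\mapsto Sx+t$ for a skew-symmetric $S$ and $t\in\R^d$; if $q$ had this form then we would have $(A-S)p(u)=t-b$ for all $u\in U$ and $(A+S)p(w)=-t-b$ for all $w\in W$. If $p(U)$ and $p(W)$ each affinely span $\R^d$ this forces $A-S=0$ and $A+S=0$, and the symmetry/skew-symmetry split then gives $A=0$, $S=0$, and finally $b=t=-b$, so $b=0$; hence $Q$ would be degenerate (empty or all of $\R^d$), contradiction. I would dispatch the remaining cases as follows: if $p(V)$ fails to affinely span $\R^d$ then $(K_{m,n},p)$ lies in a proper affine subspace and so admits a non-trivial infinitesimal flex (by a reflection normal to this subspace), making it not infinitesimally rigid; otherwise at least one of $p(U),p(W)$ affinely spans $\R^d$, and a parallel case analysis of the equations $(A\mp S)p(\cdot)=\mp t-b$ together with the symmetric/skew-symmetric decomposition rules out $q$ being trivial.

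The main obstacle is the second part: ruling out that the constructed $q$ is a trivial infinitesimal motion when the bipartition is unbalanced or when one side of the bipartition happens to lie on a flat. In the generic situation the argument is transparent, but to cover the full generality of the Bolker–Roth statement one has to combine the affine spanning information on $p(U)$, $p(W)$, and $p(V)$ with the irreducibility/non-degeneracy of $Q$; the needed case analysis is precisely what is carried out in \cite{BR,Whi} and I would simply cite those sources for the completion.
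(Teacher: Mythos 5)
The paper does not actually prove this lemma: it is quoted as an immediate consequence of Bolker--Roth \cite[Theorem 10]{BR} and Whiteley \cite[Theorem 1]{Whi}, with only a remark that Whiteley's blanket assumption $p(u)\neq p(v)$ for edges $uv$ is not used in his proof. Your proposal therefore supplies an argument where the paper supplies a citation, and the argument you give is the standard one underlying those references. The first half is correct and complete: writing the quadric as $f(x)=x^{\top}Ax+2b^{\top}x+c$ and setting $q=Ap+b$ on $U$ and $q=-Ap-b$ on $W$, the computation $(p(u)-p(w))\cdot(q(u)-q(w))=f(p(u))-f(p(w))=0$ is exactly right and shows $q\in\ker R(K_{m,n},p)$.

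The non-triviality half is where the content of Bolker--Roth lives, and your case analysis can be both tightened and is still not quite closed. Tightened: you do not need both $p(U)$ and $p(W)$ to affinely span $\R^d$; one suffices. If, say, $p(U)$ affinely spans, then $(A-S)(p(u)-p(u'))=0$ for all $u,u'\in U$ forces $A=S$, hence $A=S=0$ by the symmetric/skew-symmetric split, hence $t=b$ from the $U$-equations and then $b=0$ from the $W$-equations, so $f\equiv c$ with $c=0$ on the vertices and $Q$ is not a quadric. Not closed: the genuinely problematic case is when $p(V)$ affinely spans $\R^d$ but neither $p(U)$ nor $p(W)$ does (e.g.\ each part lies in its own hyperplane, as happens automatically for $K_{2,2}$ in $\R^2$ and can happen for $K_{5,5}$ in $\R^3$). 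There your linear-algebra argument gives no contradiction, and indeed the specific flex $q$ built from a given quadric can be trivial; one must either switch to a better-chosen quadric (such as the product of the two hyperplanes' linear forms) or invoke the full Bolker--Roth decomposition. You acknowledge this and defer to \cite{BR,Whi}, which is defensible -- it is precisely what the paper itself does for the whole lemma -- but it means your writeup is a correct proof of the lemma only in the case where some side of the bipartition affinely spans $\R^d$, plus a citation for the rest. Also, a small slip: a ``reflection normal to the subspace'' is not an infinitesimal flex; the witness you want when $p(V)$ lies in a hyperplane with normal $n$ is the velocity field assigning $n$ to one vertex and $0$ to the others (and when $|V|\leq d$ one should instead note that $mn<\binom{m+n}{2}$ rules out infinitesimal rigidity by the paper's definition).
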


Consider a generic $\{u,v\}$-coincident framework  $(K_{5,5},p)$ in $\R^3$ where  $u,v$ are vertices on different sides of the bipartition of $K_{5,5}$. Then Lemma \ref{C4-lem:nine_points_quadric}, combined with the fact that any set of nine points lie on a quadric surface
 in $\mathbb{R}^3$, imply that $(K_{5,5},p)$ is not infinitesimally rigid. On the other hand, $K_{5,5}-uv$  and $K_{5,5}/uv$ are both rigid in $\R^3$ since both $K_{5,5}-uv$ and the spanning subgraph of $K_{5,5}/uv$ obtained by deleting any three edges incident to the vertex of degree eight can be constructed from $K_4$ by the 3-dimensional versions of the 0- and 1-extension operations defined at the beginning of  Section \ref{sec:main}, see \cite{Whi1} for more details.

\paragraph{Acknowledgements} We would like to thank Shin-ichi Tanigawa for a helpful converstaion which gave rise to the $K_{5,5}$ example in Section \ref{subsec:K55}. The first author would also like to thank the Ministry of National Education of Turkey for PhD funding through a YLSY grant.

\end{document}